\DeclareMathAlphabet{\mathpzc}{OT1}{pzc}{m}{it}
\definecolor{verde}{rgb}{0.,0.7,0.}
\definecolor{indigo}{rgb}{.18, .34, .78}
\definecolor{indigo1}{rgb}{.18, .24, .78}
\definecolor{indigo2}{rgb}{.18, .14, .78}
\definecolor{indigo3}{rgb}{.18, 0., .78}
\definecolor{rojo}{rgb}{1,0,0}
\definecolor{negro}{rgb}{0,0,0}
\definecolor{lila}{rgb}{.46, .16, .78}
\definecolor{lila1}{rgb}{.46, .16, .86}
\definecolor{lila2}{rgb}{.56, .16, .86}
	\definecolor{lila3}{rgb}{.63, .16, .78}
\definecolor{lila4}{rgb}{.7, .16, .78}
\definecolor{lila5}{rgb}{.78, .26, .78}
\definecolor{lila6}{rgb}{.6, 0., .78}
\theoremstyle{plain}
\newtheorem{thm}{Theorem}[section]
\newtheorem{lma}[thm]{Lemma}
\newtheorem{cor}[thm]{Corollary}
\newtheorem{defn}[thm]{Definition}
\newtheorem{rem}[thm]{Remark}
\newtheorem{prop}[thm]{Proposition}
\newtheorem{ex}[thm]{Example}
\newcommand{\qed}{\hfill\quad\fbox{\rule[0mm]{0,0cm}{0,0mm}}  \par\bigskip}
\newcommand{\x}{\mbox{-}}
\newcommand{\R}{{\mathcal R}}
\newcommand{\s}{\hspace{0,12cm}}
\newcommand{\Comp}{\operatorname {Comp}}
\newcommand{\Cat}{\operatorname {Cat}}
\newcommand{\bEM}{{\rm bEM}}
\newcommand{\EM}{{\rm EM}}
\newcommand{\Inc}{\operatorname {Inc}}
\newcommand{\Mnd}{{\rm Mnd}}
\newcommand{\Comnd}{{\rm Comnd}}
\newcommand{\Bimnd}{{\rm Bimnd}}
\newcommand{\comp}{\circ}
\newcommand{\ot}{\otimes}
\newcommand{\C}{{\mathcal C}}
\newcommand{\M}{{\mathcal M}}
\newcommand{\D}{{\mathcal D}}
\newcommand{\A}{{\mathcal A}}
\newcommand{\B}{{\mathcal B}}
\newcommand{\U}{{\mathcal U}}
\newcommand{\YD}{{\mathcal YD}}
\newcommand{\crta}{\overline}
\newcommand{\Id}{\operatorname {Id}}
\newcommand{\id}{\operatorname {id}}
\newcommand{\Epsilon}{\varepsilon}
\newcommand{\End}{\operatorname {End}}
\def\K{{\mathcal K}}  
\newcommand{\Mod}{\operatorname{Mod}}
\newcommand{\cref}[1]{C.~\ref{c:#1}}
\newcommand{\exlabel}[1]{\label{ex:#1}}
\newcommand{\exref}[1]{Example~\ref{ex:#1}}
\newcommand{\eqlabel}[1]{\label{eq:#1}}
\newcommand{\equref}[1]{(\ref{eq:#1})}
\newcommand{\thlabel}[1]{\label{th:#1}}
\newcommand{\thref}[1]{Theorem~\ref{th:#1}}
\newcommand{\delabel}[1]{\label{de:#1}}
\newcommand{\deref}[1]{Definition~\ref{de:#1}}
\newcommand{\prlabel}[1]{\label{pr:#1}}
\newcommand{\prref}[1]{Proposition~\ref{pr:#1}}
\newcommand{\rmlabel}[1]{\label{rm:#1}}
\newcommand{\rmref}[1]{Remark~\ref{rm:#1}}
\newcommand{\selabel}[1]{\label{se:#1}}
\newcommand{\seref}[1]{Section~\ref{se:#1}}
\newcommand{\sslabel}[1]{\label{ss:#1}}
\newcommand{\ssref}[1]{Subsection~\ref{ss:#1}}
\title{Biwreaths: a self-contained system in a 2-category that encodes different known algebraic constructions and gives rise to new ones}
\author{Bojana Femi\'c \vspace{6pt} \\
{\small Facultad de Ingenier\'ia, \vspace{-2pt}}\\
{\small  Universidad de la Rep\'ublica} \vspace{-2pt}\\
{\small  Julio Herrera y Reissig 565,} \vspace{-2pt}\\
{\small  11 300 Montevideo, Uruguay}}
\begin{document}

\date{}

\maketitle

\bigbreak
\begin{center}
{\em Dedicated to Igor and all the new born babies of Women in Mathematics}
\end{center}

\begin{abstract}
We introduce bimonads in a 2-category $\K$ and define biwreaths as bimonads in the 2-category $\bEM(\K)$ of bimonads, in the analogous fashion as Lack and Street defined wreaths. 
A biwreath is then a system containing a wreath, a cowreath and their mixed versions, but also a 2-cell $\lambda$ in $\bEM(\K)$ governing the compatibility of the monad and the comonad 
structure of the biwreath. We deduce that the monad laws encode 2-(co)cycles and the comonad laws so called 3-(co)cycles, while the 2-cell conditions of the (co)monad 
structure 2-cells of the biwreath encode (co)actions twisted by these 2- and 3-(co)cycles. The compatibilities of $\lambda$ deliver concrete expressions of the latter 
structure 2-cells. We concentrate on the examples of biwreaths in the 2-category induced by a braided monoidal category $\C$ and take for the distributive laws in a biwreath 
the braidings of the different categories of Yetter-Drinfel'd modules in $\C$. We prove that the before-mentioned properties of a biwreath specified to the latter setting 
recover on the level of $\C$ different algebraic constructions known in the category ${}_R\M$ of modules over a commutative ring $R$, such as Radford biproduct, Sweedler's crossed product 
algebra, comodule algebras over a quasi-bialgebra and the Drinfel'd twist. In this way we obtain that the known examples of (mixed) wreaths coming from ${}_R\M$ 
are not merely examples, rather they are consequences of the structure of a biwreath, and that the form of their structure morphisms originates in the laws inside of a biwreath. 
Choosing different distributive laws and different 2-cells $\lambda$ in a biwreath, leads to different and possibly new algebraic constructions.

\bigbreak
{\em Mathematics Subject Classification (2010): 18D10, 16W30, 19D23.}

{\em Keywords: 2-categories, 2-monads, wreaths, braided monoidal categories}
\end{abstract}

\pagebreak
\tableofcontents

\section{Introduction}

Wreaths were introduced in \cite{LS}. The beauty of this construction is reafirmed in \cite{Laiachi} and \cite{BC},
where it was shown that many more known algebra constructions in the category of modules over a commutative ring are examples of wreaths. 
In a wreath, for a monad $T$ and a 1-cell $F$ in a 2-category $\K$ there are 2-cells $\psi: TF\to FT, \nu: FF\to FT$ and $\xi: \Id\to FT$
such that certain 7 axioms hold. If one takes $\K=\hat\C$ the 2-category induced by a monoidal category $\C$ (with a single 0-cell), the monad $T$ is nothing
but an algebra $A$ in $\C$, the 1-cell $F$ is an object $X$ of $\C$ and the three 2-cells correspond to three suitable morphisms 
in $\C$. 

In the above-mentioned examples of wreaths, a part from an algebra $A$ one gives an object $X$ (equipped with additional structures which were not originally included in the data of a wreath) 
and one gives three morphisms and it turns out that precisely because of the additional structures on $X$ the chosen morphisms comply with the necessary axioms of the wreath. 
The question that we raise and which is the 
motivation for the present research, is 
how does one guess which kind of object and morphisms would work for a wreath? We wondered whether 
there is a wreath-like object so that those additional appropriate structures and/or morphisms would come out of the intrinsic properties of the object.
On the other hand, we were interested in constructing 
an object which would have a wreath product and a cowreath coproduct so that these are compatible in a sense of a bialgebra-like object. 
An example of such a construction could be the Radford biproduct \cite{Rad1}.   

Bespalov and Drabant have studied cross product bialgebras in a braided monoidal category $\C$ in \cite{BD}. If one fixes $\xi$ to be given by the tensor product of the units,
their cross product (co)algebra coincides with the notion of a (co)wreath in $\hat\C$ from \cite{LS}.
Their cross product bialgebra is a bialgebra which is a cross product algebra and a cross product coalgebra at the same time. 
Although similar to this, our construction 
has two major differences. 
Firstly, our interest was to deepen the study of wreaths in a general 2-categorical setting. Thus, following the idea of a wreath, we define a biwreath as a bimonad in the 
(Eilenberg-Moore) 2-category of bimonads, where we introduce the 
notion of a bimonad in a 2-category. So, contrarily to Bespalov and Drabant we start with a 1-cell $B$ in $\K$ which is a bimonad and then we take a ``biwreath $F$ around $B$'', and 
the newly obtained 1-cell $FB$ does not have to be a bimonad in $\K$. Secondly, specifying our construction to $\K=\hat\C$ and without twisting the order of 1-cells/objects 
(\rmref{twist sides}): while we have an object $B\ot X$ where $B$ is a bialgebra in $\C$, 
 Bespalov and Drabant consider a bialgebra on the underlying object $A\ot C$ where $A$ is an algebra and $C$ a coalgebra in $\C$. 


For the definition of a bimonad in $\K$, rather than generalizing opmonoidal monads, named by McCrudden in \cite{McC} (they were introduced and called {\em Hopf monads} in \cite{Moe} 
and in \cite{Brug-Vir} they were called {\em bimonads}), we generalize the notion of a bimonad due to Mesablishvili and Wisbauer, \cite{Wisb}.
The reason for this is that in the former case the 0-cells of $\K$ should posses a monoidal structure, which would be a too strong restriction for our purposes. 

By the definition a biwreath has distributive laws $\psi, \phi$, a structure of a wreath (with 2-cells $\mu_M, \eta_M$), a cowreath  (with 2-cells $\Delta_C, \Epsilon_C$), 
a mixed wreath  (with 2-cells $\Delta_M, \Epsilon_M$), a mixed cowreath (with 2-cells $\mu_C, \eta_C$) and additional 2-cells $\lambda_M, \lambda_C$ for the bimonad compatibilities. 
We show, among other, that: the monad law for $\mu_M$ delivers a 2-cocycle condition on a 2-cell $\sigma$ defined via $\mu_M$; the comonad law for $\Delta_M$ delivers a ``3-cocycle condition'' 
on a 2-cell $\Phi_{\lambda}$ defined via $\Delta_M$; the 2-cell condition for $\mu_M$ gives an ``action twisted by a 2-cocycle''; the 2-cell condition for $\Delta_M$ gives a ``quasi coaction 
twisted by a 3-cocycle'', and the dual versions of these statements. 
The latter structure provides a 2-categorical formulation of ``twisted coaction'' introduced in \cite{Street} for bimonoids in braided monoidal categories. 
We call this ``quasi (co)action'' alluding to the quasi-bialgebra 
setting where these coactions naturally emerged. Namely, \cite[Definition 12]{Street} formalizes comodule algebras over a quasi-bialgebra $B$ from \cite{HN} to any braided monoidal category 
$\C$, so that $B$ is a proper bialgebra in $\C$ rather than a quasi-bialgebra. This is precisely what happens in our construction but on the 2-categorical level. 
As a matter of fact, from the structure of a biwreath we naturally obtain ``alternative quasi (co)action'' and the latter one we recognize in the setting of what we call a 
{\em left-right mixed biwreath-like object}. In this way we provide an alternative definition of a twisted (co)action in the context of braided monoidal categories. When the (quasi)bialgebra 
is (co)commutative, the two definitions coincide. 

Apart from the above said, we deduce a Yetter-Drinfel'd-like condition between $B$ and $F$
and also that they are (co)module (co)algebras (in a broader sense) one over the other at appropriate sides. More precisely, they are ``measured'' in the sense of 
\cite[Definition 7.1.1]{Mont}. Besides, there are weak (co)associative (co)multiplications on $F$. 
Moreover, from the bimonad compatibility for $\lambda_M$ (resp. $\lambda_C$) we deduce the form of the 2-cells $\mu_M, \Delta_M$ (resp. $\mu_C, \Delta_C$). 
It turns out that the 2-cells $\mu_M, \Delta_M$ can not be simultaneously determined unless one of them is {\em canonical} (\deref{canonical}) or it does not 
appear in the site, but it becomes substituted by a proper 2-cell in $\K$ rather than in $\bEM(\K)$.  
This reminds us of a sort of ``uncertainty principle'', 
though this uncertainty will not be an obstacle in the examples we treat in the present paper. 
Thus we achieve our objective to determine the additional structures of $F$ and the form of 2-cells defining a (co)wreath, from the intrinsic data of a biwreath. 

In the present paper we treat only the examples where $\K=\hat\C$. In all of them it occurs that when one fixes $\psi, \phi$, various intrinsic structures in a biwreath become trivial 
leading to the notions of (mixed) biwreath-like objects. 
The surviving data recovers some known algebraic construction. Fixing different $\psi, \phi$ ``turns off'' other intrinsic structures and some other data remains ``with lights on'' 
recovering some other known structure. Thus when 
$\psi, \phi$ come from the pre-braiding of the category ${}_B ^B\YD(\C)$ of Yetter-Drinfel'd modules over $B$, studied in \cite{Besp, Femic1}, where $\C$ is a braided monoidal category, 
we recover the Radford biproduct in $\C$, \ssref{ex psi_1}. In this case we obtain that a biwreath indeed is a bimonad in $\hat\C$, \thref{Radford}. 
When $\psi, \phi$ come from the pre-braiding of $\YD(\C)_F ^F$, we recover the Sweedler's crossed (co)product (co)algebra \cite[Section 7]{Mont} 
in $\C$ (\ssref{ex psi_2}, \exref{Sw}). For $\psi, \phi$ the pre-braiding of ${}_F ^F\YD(\C)$, we obtain the above-mentioned quasi (co)actions twisted by a 3-(co)cycle and a mixed 
wreath \cite[Proposition 13]{Street}, \cite[Proposition 5.3]{BC} in $\C$ (see \ssref{Case 3}, \exref{3-1} -- \exref{3-3}), as well as their alternative versions 
(\ssref{Case 3 alt}, \exref{alternative}). In the particular case when $B=I$ we recover the 
Drinfel'd twist and its dual version from \cite{Maj5} in $\C$ (\ssref{B=I}, \exref{Drinfeld}). 
Moreover, from our data ``biwreath-like hybrid'' we recover that Sweedler's 2-cocycle twists the multiplication and that Drinfel'd twist twists the comultiplication 
of a bialgebra in $\C$. All this illustrates that the mentioned known constructions in the category ${}_R\M$ of modules a commutative ring $R$ have their origin in the structure of a biwreath 
and associated notions. Moreover, from the above said we obtain that the known examples of (mixed) wreaths coming from the category ${}_R\M$, rather than being 
merely examples, actually come out from the structure of a biwreath, and that the form of their structure morphisms is determined by the laws of a biwreath. 

We should mention that our names {\em cycle} and {\em 3-(co)cycle} are not precise: our cycle could be understood as a {\em dual cocycle} and our 3-cocycle $\Phi_{\lambda}: I\to FFB$ 
is not really a 3-cocycle, but when $B=F$ it satisfies a condition similar to that of a reassociator $\Phi: I\to FFF$ in a quasi-bialgebra $F$, which indeed is a 3-cocycle. 

In a biwreath one may choose different distributive laws $\psi, \phi$ and 2-cells $\lambda$ from the ones we deal with here and one may choose whether to consider certain structure 
2-cell canonical or not. This gives different combinations of structure 2-cells and a fortiori different wreath and cowreath (co)products. 
These different choices give rise to further different (mixed) (co)wreath (co)product structures, which possibly have never been studied yet. The 
investigation of these different new structures we leave for a future study. 

As for the organization of the paper, 
in the second section we recall some necessary known notions but also introduce some new ones, \deref{bimonad} and \deref{(co)modules}, 
and prove a preliminary result \prref{distr-actions}. In the third section we introduce the 2-category $\bEM(\K)$ of bimonads in $\K$ whose 2-cells consist of pairs made by 2-cells in 
$\EM^M(\K)$ and $\EM^C(\K)$, being the latter the free completion 2-categories under the Eilenberg-Moore objects for monads and comonads in $\K$, respectively. 
We conjecture that the 2-category $\bEM(\K)$ is the free completion of $\Bimnd(\K)$ under the Eilenberg-Moore construction for bimonads. 
\seref{biwreaths} is devoted to the definition of a biwreath 
and analysis of the structures which lie inside of a biwreath. In the last three sections we study the examples mentioned in a previous paragraph: \seref{example1} recovers 
Radford biproduct from a biwreath, \seref{bl objects} Sweedler's crossed prodcut from a biwreath-like object and \seref{mixed bl objects} (alternative) quasi (co)actions 
from a (left-right) mixed biwreath-like object.

\section{Preliminaries}

We first say some words on the setting and notation.
Throughout $\K$ will denote a 2-category. We assume that the reader is familiar with the basic notions of 2-categories, for reference we recommend \cite{Be, Bo}. The arrows of both 1- and
2-cells in $\K$ we will denote by $\to$ and we will stress which kind of arrow is meant. (Co)monads and distributive laws in $\K$ involve 1- and 2-cells and their compositions. 
As far as one works only with 1-cells acting as endomorphisms on the same 0-cell $\A$, one works in the monoidal category $\End(\A)$ of endomorphisms of $\A$ and hence one can use 
string diagrams for monoidal categories in the computations. 
Throughout we will freely use string diagrams both for expressions in a monoidal category $\C$ (the objects of $\C$ are sources and targets of the strings and the strings stand for morphisms in $\C$), 
as for 2-categories (sources and targets of strings are 1-cells which may be composed, while the strings stand for 2-cells).
Left and right actions of a monoid, left and right coactions of a comonoid, a 2-cell \textit{i.e.} a morphism in $\C$, multiplication and unit of a monoid, commultiplication and
counit of a comonoid (both in $\C$ and $\K$) we write respectively: \vspace{-0,7cm}
\begin{center}
\begin{equation} \eqlabel{strings}
\begin{tabular}{ccp{0.5cm}cccp{0.5cm}c}
{\footnotesize left action} & {\footnotesize right action}   & & \multicolumn{1}{c}{{\footnotesize left coaction}}  &  {\footnotesize right coaction} & {\footnotesize 2-cell / morphism} \\
$\gbeg{2}{2}
\got{1}{} \gnl
\glm \gnl
\gend$ & $\gbeg{2}{2}
\got{3}{} \gnl
\grm \gnl
\gend$ & &  $\gbeg{2}{2}
\got{3}{} \gnl
\glcm \gnl
\gend$ & $\quad \gbeg{2}{2}
\got{3}{} \gnl
\grcm \gnl
\gend$ &
$\gbeg{2}{2}
\gcl{1} \gnl
\gbmp{f} \gnl
\gcl{1} \gnl
\gend
\vspace{0,6cm}
$   \\
\hspace{5pt}
{\footnotesize multiplication} & {\footnotesize unit} & & {\footnotesize comultiplication} & {\footnotesize counit} & & \\
 $\gbeg{2}{2}
\got{1}{} \gnl
\gmu \gnl
\gend$ & $\gbeg{1}{2}
\got{1}{} \gnl
\gu{1} \gnl
\gend$ & & $
\gbeg{2}{2}
\got{2}{} \gnl
\gcmu \gnl
\gend$ &  $
\gbeg{1}{2}
\got{1}{} \gnl
\gcu{1} \gnl
\gend$
\end{tabular}
\end{equation}
\end{center}

In the next subsection we recollect some known concepts,
we introduce the notion of a bimonad in a 2-category and prove a preliminary result \prref{distr-actions}.

\subsection{(Co)monads, bimonads, distributive laws and (co)actions of (co)monads in 2-categories} \sslabel{prelim 1}

Monads in 2-categories were introduced by Street in \cite{Street}, we recall the definition here.

A monad on a 0-cell $\A$ in $\K$ is a 1-cell $T:\A\to\A$ together with 2-cells $\mu:TT\to T$ and $\eta: \Id_{\A}\to T$ such that
$\mu(\mu\times\Id_T)=\mu(\Id_T\times\mu)$ and $\mu(\Id_T\times\eta)=\Id_T=\mu(\eta\times\Id_T)$.
In string diagrams we write this as:
$$\scalebox{0.86}{
\gbeg{4}{4}
\got{1}{T} \got{1}{} \got{1}{T} \got{1}{T} \gnl
\gwmu{3} \gcl{1} \gnl
\gvac{1} \gwmu{3} \gnl
\gvac{2} \gob{1}{T}
\gend}=
\scalebox{0.86}{
\gbeg{4}{4}
\got{1}{T} \got{1}{T} \got{1}{} \got{1}{T} \gnl
\gcl{1} \gwmu{3} \gnl
\gwmu{3} \gnl
\gvac{1} \gob{1}{T}
\gend}
\qquad \textnormal{and} \qquad
\scalebox{0.86}{
\gbeg{2}{4}
\got{1}{} \got{1}{T} \gnl
\gu{1} \gcl{1} \gnl
\gmu \gnl
\gob{2}{T}
\gend}=
\scalebox{0.86}{
\gbeg{1}{4}
\got{1}{T} \gnl
\gcl{2} \gnl
\gob{1}{T}
\gend}=
\scalebox{0.86}{
\gbeg{2}{4}
\got{1}{T} \got{1}{} \gnl
\gcl{1} \gu{1} \gnl
\gmu \gnl
\gob{2}{T.}
\gend}
$$

Dually, a comonad on a 0-cell $\A$ in $\K$ is a 1-cell $D:\A\to\A$ together with 2-cells $\Delta:D\to DD$ and $\Epsilon: D\to\Id_{\A}$ such that
$(\Delta\times\Id_D)\Delta=(\Id_D\times\Delta)\Delta$ and $(\Id_D\times\Epsilon)\Delta=\Id_D=(\Epsilon\times\Id_D)\Delta$.
The string diagrams for the comonad laws are vertically symmetric to the ones above.

\medskip

Distributive laws in 2-categories were defined in \cite{Beck}. Let us consider the following two cases.

\begin{defn} \delabel{distr}
Let $(\A, T, \mu, \eta)$ be a monad, $(\A, D, \Delta, \Epsilon)$ a comonad and $F:\A\to\A$ a 1-cell in $\K$.
\begin{enumerate}[(a)]
\item A 2-cell $\psi:TF\to FT$ in $\K$ is called a distributive law from the monad $T$ to $F$ if identities \equref{psi laws} hold.
\item A 2-cell $\phi:FD\to DF$ in $\K$ is called a distributive law from $F$ to the comonad $D$ if identities \equref{phi laws} hold.
\end{enumerate}
\vspace{-1,4cm}
\begin{center} \hspace{-0,2cm}
\begin{tabular}{p{7.2cm}p{0cm}p{8cm}}
\begin{equation}\eqlabel{psi laws}
\gbeg{3}{5}
\got{1}{T}\got{1}{T}\got{1}{F}\gnl
\gcl{1} \glmpt \gnot{\hspace{-0,34cm}\psi} \grmptb \gnl
\glmptb \gnot{\hspace{-0,34cm}\psi} \grmptb \gcl{1} \gnl
\gcl{1} \gmu \gnl
\gob{1}{F} \gob{2}{T}
\gend=
\gbeg{3}{5}
\got{1}{T}\got{1}{T}\got{1}{F}\gnl
\gmu \gcn{1}{1}{1}{0} \gnl
\gvac{1} \hspace{-0,34cm} \glmptb \gnot{\hspace{-0,34cm}\psi} \grmptb  \gnl
\gvac{1} \gcl{1} \gcl{1} \gnl
\gvac{1} \gob{1}{F} \gob{1}{T}
\gend;
\quad
\gbeg{2}{5}
\got{3}{F} \gnl
\gu{1} \gcl{1} \gnl
\glmptb \gnot{\hspace{-0,34cm}\psi} \grmptb \gnl
\gcl{1} \gcl{1} \gnl
\gob{1}{F} \gob{1}{T}
\gend=
\gbeg{3}{5}
\got{1}{F} \gnl
\gcl{1} \gu{1} \gnl
\gcl{2} \gcl{2} \gnl
\gob{1}{F} \gob{1}{T}
\gend
\end{equation} & &
\begin{equation}\eqlabel{phi laws}
\gbeg{3}{5}
\got{1}{F} \got{2}{D}\gnl
\gcl{1} \gcmu \gnl
\glmptb \gnot{\hspace{-0,34cm}\phi} \grmptb \gcl{1} \gnl
\gcl{1} \glmptb \gnot{\hspace{-0,34cm}\phi} \grmptb \gnl
\gob{1}{D} \gob{1}{D} \gob{1}{F}
\gend=
\gbeg{3}{5}
\got{2}{F} \got{1}{\hspace{-0,2cm}D}\gnl
\gcn{1}{1}{2}{2} \gcn{1}{1}{2}{2} \gnl
\gvac{1} \hspace{-0,34cm} \glmpt \gnot{\hspace{-0,34cm}\phi} \grmptb \gnl
\gvac{1} \hspace{-0,2cm} \gcmu \gcn{1}{1}{0}{1} \gnl
\gvac{1} \gob{1}{D} \gob{1}{D} \gob{1}{F}
\gend;
\quad
\gbeg{3}{5}
\got{1}{F} \got{1}{D} \gnl
\gcl{1} \gcl{1} \gnl
\glmptb \gnot{\hspace{-0,34cm}\phi} \grmptb \gnl
\gcu{1} \gcl{1} \gnl
\gob{3}{F}
\gend=
\gbeg{3}{5}
\got{1}{F} \got{1}{D} \gnl
\gcl{1} \gcl{1} \gnl
\gcl{2}  \gcu{1} \gnl
\gob{1}{F}
\gend
\end{equation}
\end{tabular}
\end{center} 
\end{defn}

We define bimonads in $\K$ generalizing the definition of a bimonad over an ordinary category from \cite{Wisb}.

\begin{defn} \delabel{bimonad}
A 1-cell $B:\A\to\A$ in $\K$ is called a {\em bimonad} if $(\A, B, \mu, \eta)$ is a monad, $(\A, B, \Delta, \Epsilon)$ a comonad and there is a 2-cell $\lambda: BB\to BB$ in $\K$
such that the following conditions are fulfilled:
\begin{enumerate}[(a)]
\item

\begin{center}
\begin{tabular}{p{4.2cm}p{0cm}p{4.2cm}p{0cm}p{3.6cm}}
\begin{equation} \eqlabel{eta-Delta B}
\gbeg{2}{3}
\got{1}{B} \got{1}{B} \gnl
\gcl{1} \gcl{1} \gnl
\gcu{1}  \gcu{1} \gnl
\gend=
\gbeg{2}{3}
\got{1}{B} \got{1}{B} \gnl
\gmu \gnl
\gvac{1} \hspace{-0,2cm} \gcu{1} \gnl
\gob{1}{}
\gend
\end{equation} & \qquad &  \vspace{-0,4cm} 
\begin{equation} \eqlabel{epsilon-mu B}
\gbeg{2}{3}
\gu{1}  \gu{1} \gnl
\gcl{1} \gcl{1} \gnl
\gob{1}{B} \gob{1}{B}
\gend=
\gbeg{2}{3}
\gu{1} \gnl
\hspace{-0,34cm} \gcmu \gnl
\gob{1}{B} \gob{1}{B}
\gend
\end{equation} & &  \vspace{-0,6cm}
\begin{equation} \eqlabel{epsilon-eta B}
\gbeg{1}{2}
\gu{1} \gnl
\gcu{1} \gnl
\gob{1}{}
\gend=
\Id_{id_{\A}}.
\end{equation}
\end{tabular}
\end{center}

\item $\lambda$ obeys the conditions \equref{psi laws} and \equref{phi laws} from \deref{distr}, that is:
\begin{center} \hspace{-1,2cm}
\begin{tabular}{p{7.2cm}p{-1cm}p{8cm}}
\gbeg{3}{5}
\got{1}{B}\got{1}{B}\got{1}{B}\gnl
\gcl{1} \glmptb \gnot{\hspace{-0,34cm}\lambda} \grmptb \gnl
\glmptb \gnot{\hspace{-0,34cm}\lambda} \grmptb \gcl{1} \gnl
\gcl{1} \gmu \gnl
\gob{1}{B} \gob{2}{B}
\gend=
\gbeg{3}{5}
\got{1}{B}\got{1}{B}\got{1}{B}\gnl
\gmu \gcn{1}{1}{1}{0} \gnl
\gvac{1} \hspace{-0,34cm} \glmptb \gnot{\hspace{-0,34cm}\lambda} \grmptb  \gnl
\gvac{1} \gcl{1} \gcl{1} \gnl
\gvac{1} \gob{1}{B} \gob{1}{B}
\gend;
\quad
\gbeg{2}{5}
\got{3}{B} \gnl
\gu{1} \gcl{1} \gnl
\glmptb \gnot{\hspace{-0,34cm}\lambda} \grmptb \gnl
\gcl{1} \gcl{1} \gnl
\gob{1}{B} \gob{1}{B}
\gend=
\gbeg{2}{5}
\got{1}{B} \gnl
\gcl{1} \gu{1} \gnl
\gcl{2} \gcl{2} \gnl
\gob{1}{B} \gob{1}{B}
\gend;
& &
\gbeg{3}{5}
\got{1}{B} \got{2}{B}\gnl
\gcl{1} \gcmu \gnl
\glmptb \gnot{\hspace{-0,34cm}\lambda} \grmptb \gcl{1} \gnl
\gcl{1} \glmptb \gnot{\hspace{-0,34cm}\lambda} \grmptb \gnl
\gob{1}{B} \gob{1}{B} \gob{1}{B}
\gend=
\gbeg{3}{5}
\got{2}{B} \got{1}{\hspace{-0,4cm}B} \gnl
\gcn{1}{1}{2}{2} \gcn{1}{1}{2}{2} \gnl
\gvac{1} \hspace{-0,34cm} \glmptb \gnot{\hspace{-0,34cm}\lambda} \grmptb \gnl
\gvac{1} \hspace{-0,2cm} \gcmu \gcn{1}{1}{0}{1} \gnl
\gvac{1} \gob{1}{B} \gob{1}{B} \gob{1}{B}
\gend;
\quad
\gbeg{3}{5}
\got{1}{B} \got{1}{B} \gnl
\gcl{1} \gcl{1} \gnl
\glmptb \gnot{\hspace{-0,34cm}\lambda} \grmptb \gnl
\gcu{1} \gcl{1} \gnl
\gob{3}{B}
\gend=
\gbeg{3}{5}
\got{1}{B} \got{1}{B} \gnl
\gcl{1} \gcl{1} \gnl
\gcl{2}  \gcu{1} \gnl
\gob{1}{B}
\gend
\end{tabular}
\end{center}
\item
\begin{equation} \eqlabel{bialg-lambda}
\scalebox{0.86}{
\gbeg{3}{5}
\got{1}{B} \got{3}{B} \gnl
\gwmu{3} \gnl
\gvac{1} \gcl{1} \gnl
\gwcm{3} \gnl
\gob{1}{B}\gvac{1}\gob{1}{B}
\gend}=
\scalebox{0.86}{
\gbeg{4}{5}
\got{1}{B} \got{3}{B} \gnl
\gcl{1} \gwcm{3} \gnl
\glmptb \gnot{\hspace{-0,34cm}\lambda} \grmptb \gvac{1} \gcl{1} \gnl
\gcl{1} \gwmu{3} \gnl
\gob{1}{B} \gvac{1} \gob{2}{\hspace{-0,34cm}B.}
\gend}
\end{equation}
\end{enumerate}
\end{defn}

Analogously to (co)actions of (co)algebras in monoidal categories, we define (co)actions of (co)monads. Then we will show that distributibe laws involving a (co)monad
under certain conditions give rise to (co)actions of these (co)monads.

\begin{defn} \delabel{(co)modules}
Let $(\A, T, \mu, \eta)$ be a monad and $(\A, D, \Delta, \Epsilon)$ a comonad in $\K$.
\begin{enumerate}[(a)]
\item A 1-cell $F:\B\to\A$ in $\K$ is called a {\em left $T$-module} if there is a 2-cell $\nu: TF\to F$ such that $\nu(\mu\times\Id_F)=\nu(\Id_T\times\nu)$
and $\nu(\eta\times\Id_F)=\Id_F$ holds.
\item A 1-cell $F:\A\to\B$ in $\K$ is called a {\em right $T$-module} if there is a 2-cell $\nu: FT\to F$ such that $\nu(\Id_F\times\mu)=\nu(\nu\times\Id_T)$
and $\nu(\Id_F\times\eta)=\Id_F$ holds.
\item A 1-cell $F:\B\to\A$ in $\K$ is called a {\em left $D$-comodule} if there is a 2-cell $\lambda: F\to DF$ such that $(\Delta\times\Id_F)\lambda=(\Id_D\times\lambda)\lambda$
and $(\Epsilon\times\Id_F)\lambda=\Id_F$ holds.
\item A 1-cell $F:\A\to\B$ in $\K$ is called a {\em right $D$-comodule} if there is a 2-cell $\rho: F\to FD$ such that $(\Id_F\times\Delta)\rho=(\rho\times\Id_D)\rho$
and $(\Id_F\times\Epsilon)\rho=\Id_F$ holds.
\end{enumerate}
\end{defn}

When we have a left $T$-module $F$ and we use string diagrams, instead of writing the 2-cell
$\gbeg{2}{5}
\got{1}{T}\got{1}{F}\gnl
\gcl{1} \gcl{1} \gnl
\glmpt \gnot{\hspace{-0,34cm}\nu} \grmptb \gnl
\gvac{1} \gcl{1} \gnl
\gob{3}{F}
\gend
$ 
we will usually write just
$\gbeg{2}{3}
\got{1}{T} \got{1}{F} \gnl
\glm \gnl
\gob{3}{F}
\gend$.
Thus the left module laws in string diagrams will read:
$$\scalebox{0.86}{
\gbeg{4}{5}
\got{1}{T}\got{3}{T}\got{0}{\hspace{-0,34cm}F} \gnl
\gwmu{3} \gcl{1} \gnl
\gvac{1} \gcn{2}{1}{1}{3} \gcl{1} \gnl
\gvac{2} \glm \gnl
\gvac{3} \gob{1}{F}
\gend} =
\scalebox{0.86}{
\gbeg{3}{5}
\got{1}{T}\got{1}{T}\got{1}{F} \gnl
\gcl{1} \glm \gnl
\gcn{2}{1}{1}{3} \gcl{1} \gnl
\gvac{1} \glm \gvac{1} \gnl
\gvac{2} \gob{1}{F}
\gend}
\qquad \textnormal{and} \qquad
\scalebox{0.86}{
\gbeg{2}{4}
\got{1}{}\got{1}{F} \gnl
\gu{1} \gcl{1} \gnl
\glm \gnl
\gvac{1}\gob{1}{F}
\gend}=
\scalebox{0.86}{
\gbeg{1}{4}
\got{1}{F} \gnl
\gcl{2} \gnl
\gob{1}{F}
\gend}
$$
Similarly we will proceed in the rest of the three cases in the above definition, using the corresponding symbol from \equref{strings}.
Now we find:

\begin{prop} \prlabel{distr-actions}
Let $F:\A\to\A$ be a 1-cell in $\K$.
\begin{enumerate}[(a)]
\item
Given a monad  $(\A, B, \mu, \eta)$ with a distributive law $\psi:BF\to FB$ from the monad $B$ to $F$ and a 2-cell
$\Epsilon=
\gbeg{2}{1}
\got{1}{B} \gnl
\gcu{1} \gnl
\gend$ such that
$\gbeg{2}{3}
\got{1}{B} \got{1}{B} \gnl
\gcu{1}  \gcu{1} \gnl
\gob{1}{}
\gend=
\gbeg{2}{3}
\got{1}{B} \got{1}{B} \gnl
\gmu \gnl
\gvac{1} \hspace{-0,2cm} \gcu{1} \gnl
\gob{1}{}
\gend$ and
$\gbeg{1}{2}
\gu{1} \gnl
\gcu{1} \gnl
\gob{1}{}
\gend=
\Id_{id_{\A}}$
hold, then the 2-cell:
\begin{equation} \eqlabel{F left B-mod}
\gbeg{2}{3}
\got{1}{B} \got{1}{F} \gnl
\glm \gnl
\gob{3}{F}
\gend=
\gbeg{2}{5}
\got{1}{B} \got{1}{F} \gnl
\gcl{1} \gcl{1} \gnl
\glmptb \gnot{\hspace{-0,34cm}\psi} \grmptb \gnl
\gcl{1} \gcu{1} \gnl
\gob{1}{F}
\gend
\end{equation}
makes $F$ a left $B$-module.
\item
Given a comonad $(\A, B, \Delta, \Epsilon)$ with a distributive law $\phi:FB\to BF$ from $F$ to the comonad $B$ and a 2-cell
$\eta=
\gbeg{2}{2}
\gu{1} \gnl
\gob{1}{B}
\gend$ such that
$
\gbeg{2}{2}
\gu{1} \gu{1} \gnl
\gob{1}{B} \gob{1}{B}
\gend=
\gbeg{2}{4}
\got{1}{}  \gnl
\gu{1} \gnl
\hspace{-0,34cm} \gcmu \gnl
\gob{1}{B} \gob{1}{B}
\gend$
and
$\gbeg{1}{2}
\gu{1} \gnl
\gcu{1} \gnl
\gob{1}{}
\gend=
\Id_{id_{\A}}$ hold, then the 2-cell:
\begin{equation} \eqlabel{F left B-comod}
\gbeg{2}{3}
\got{3}{F} \gnl
\glcm \gnl
\gob{1}{B} \gob{1}{F}
\gend=
\gbeg{2}{5}
\got{1}{F} \gnl
\gcl{1} \gu{1} \gnl
\glmptb \gnot{\hspace{-0,34cm}\phi} \grmptb \gnl
\gcl{1} \gcl{1} \gnl
\gob{1}{B} \gob{1}{F}
\gend
\end{equation}
makes $F$ a left $B$-comodule.
\item In particular, given a bimonad $(\A, B, \mu, \eta, \Delta, \Epsilon, \lambda)$ and distributive laws $\psi:BF\to FB$ and $\phi:FB\to BF$ as in (a) and (b), the 2-cells
\equref{F left B-mod} and \equref{F left B-comod} make $F$ a left $B$-module and a left $B$-comodule.
\end{enumerate}
\end{prop}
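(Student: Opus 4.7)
The plan is to verify the (co)module axioms directly from the definitions of $\nu$ in \equref{F left B-mod} and of the coaction in \equref{F left B-comod}, using only the distributive law axioms and the stated compatibilities on $\Epsilon$ and $\eta$. Parts (a) and (b) are dual to each other, and (c) follows by simply combining them.

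For associativity in part (a), I would expand $\nu \circ (\mu \otimes \Id_F)$ via \equref{F left B-mod} and rewrite $\psi \circ (\mu \otimes \Id_F)$ using the left axiom of \equref{psi laws}, which replaces this composite by two copies of $\psi$ followed by $\mu$ acting on the two $B$-strands to the right. The single $\Epsilon$ that sits after this $\mu$ can then be split into $\Epsilon \otimes \Epsilon$ by the stated hypothesis. The resulting diagram coincides with $\nu \circ (\Id_B \otimes \nu) = (\Id_F \otimes \Epsilon) \circ \psi \circ (\Id_B \otimes \Id_F \otimes \Epsilon) \circ (\Id_B \otimes \psi)$, once one observes that the rightmost $\Epsilon$ may be slid past the outer $\psi$ trivially, since $\psi$ does not touch that $B$-strand. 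For unitality, the right axiom of \equref{psi laws} reduces $\psi \circ (\eta \otimes \Id_F)$ to $\Id_F \otimes \eta$, after which the hypothesis $\Epsilon \circ \eta = \Id$ immediately yields $\nu \circ (\eta \otimes \Id_F) = \Id_F$.

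Part (b) is proved by turning these string diagrams upside down: the axioms \equref{phi laws} play the role of \equref{psi laws}, the hypothesis $\Delta \circ \eta = \eta \otimes \eta$ replaces $\Epsilon \circ \mu = \Epsilon \otimes \Epsilon$, and the hypothesis $\Epsilon \circ \eta = \Id$ is self-dual. Part (c) is then immediate, because the bimonad conditions \equref{eta-Delta B}, \equref{epsilon-mu B} and \equref{epsilon-eta B} are exactly the hypotheses required by (a) and (b).

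The main potential difficulty is purely bookkeeping: keeping track of the placement of identity strands and the order of composition when invoking the axioms of \equref{psi laws} and \equref{phi laws}. There is no conceptual obstacle, and in particular the bimonad compatibility 2-cell $\lambda$ and axiom \equref{bialg-lambda} play no role in establishing (c).
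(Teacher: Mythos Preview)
Your proposal is correct and follows exactly the approach of the paper, which simply says to apply $\Epsilon$ to the first law in \equref{psi laws} (and use the unit law of \equref{psi laws} together with $\Epsilon\eta=\Id$) for part (a), with (b) dual and (c) immediate. You have merely spelled out the bookkeeping that the paper leaves implicit.
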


\begin{proof}
For (1) apply $\Epsilon$ to the first law in \equref{psi laws}, and $\eta$ to the second one. The proof of (2) is analogous, (3) is obvious.
\qed\end{proof}

\subsection{The 2-categories $\EM^M(\K)$ and $\EM^C(\K)$, wreaths and cowreaths}  \sslabel{(co)wreaths}


The 2-category $\Mnd(\K)$ of monads in $\K$ was introduced by Street in \cite{Street}. It need not posses Eilenberg?Moore objects. The free completion $\EM(\K)$ of $\Mnd(\K)$
under the Eilenberg-?oore construction was described by Street and Lack in \cite{LS}. It is a 2-category which has the same 0- and 1-cells as $\Mnd(\K)$,
but the 2-cell slightly differ. A wreath is then defined, also in \cite{LS}, as a monad in $\EM(\K)$, that is, a 0-cell of the 2-category $\EM(\EM(\K))$.
We are not going to write out here how the 2-category $\EM(\K)$ looks like, nor what a wreath is, as both will appear as integral part of our 2-category
$\bEM(\K)$ and a biwreath, respectively. 


In the context of our work the 2-category $\EM(\K)$ from \cite{LS} we will denote by $\EM^M(\K)$ emphasizing that one deals with the Eilenberg-?oore construction for monads.
We will also use the 2-category $\EM^C(\K)$ which is defined in analogous way considering comonads in $\K$. Accordingly, we will have cowreaths as comonads in the 2-category $\EM^C(\K)$,
{\em i.e.} 0-cells of $\EM^C(\EM^C(\K))$. 

Let us comment the duality between the 2-categories $\EM^M(\K)$ and $\EM^C(\K)$.
Recall that for any 2-category $\K$ there exist three dual 2-categories: (1) $\K^{op}$ which differs from $\K$ only in that the 1-cells appear in reversed order; $\K_{op}$ where only the 2-cells
are reversed with respect to those in $\K$; and $\K^{op}_{op}$ where both 1- and 2-cells are reversed.

It is a known fact that a monad in $\K_{op}$ is a comonad in $\K$: a 1-cell $T:\A\to\A$
with 2-cells $\mu:TT\to T$ and $\eta: I\to T$ with the three monad conditions in $\K_{op}$ translate into a 1-cell $T:\A\to\A$ and 2-cells $\Delta: T\to TT$ and $\Epsilon: T\to I$
satisfying dual conditions in $\K$. As observed in \cite{PW} before Definition 5.2, in a similar fashion one may define morphisms between comonads in $\K$
({\em i.e.} 1-cells in the candidate 2-category $\Comnd(\K)$ of comonads in $\K$) as 1-cells in $\Mnd(\K_{op})$, but in order to have a forgetful 2-functor $\Comnd(\K)\to\K$,
one needs to reverse the 2-cells in $\Mnd(\K_{op})$ in order to get 2-cells in $\Comnd(\K)$. Thus, one defines $\Comnd(\K)=(\Mnd(\K_{op}))_{op}$.

A similar thing happens in $\EM^C(\K)$: by construction its 0- and 1-cells coincide with those in $\EM(K_{op})$, and a 2-cell $\rho: (F,\psi)\rightarrow (G,\phi)$ in $\EM^C(\K)$ is given
by a suitable 2-cell $\hat\rho: FT\rightarrow G$ in $\K$, while a 2-cell $\tau: (F,\psi)\rightarrow (G,\phi)$ in $\EM(K_{op})$ is given by
a 2-cell $\hat\tau: GT\rightarrow F$ in $\K$. Also the 2-category $\EM^C(\K)$ will be contained in our 2-category $\bEM(\K)$, where its construction will be evident.




\section{The bimonad case: the 2-category $\bEM(\K)$} \selabel{bEM cat}

Following the idea of the construction of the 2-categories $\EM^M(\K)$ and $\EM^C(\K)$ we define here the 2-category $\bEM(\K)$. Its 0-cells are bimonads
$(\A, B, \mu, \eta, \Delta, \Epsilon, \lambda)$
in $\K$, defined in \deref{bimonad},
1-cells are triples $(F,\psi,\phi)$ where $(F,\psi)$ is a 1-cell in $\EM^M(\K)$ and $(F,\phi)$ is a 1-cell in $\EM^C(\K)$ with a certain compatibility condition between $\psi, \phi$ and $\lambda$,
and 2-cells are pairs $(\rho_M, \rho_C)$ where $\rho_M$ is a 2-cell in $\EM^M(\K)$ and $\rho_C$ is a 2-cell in $\EM^C(\K)$. We list the complete description of $\bEM(\K)$ in the sequel.

\medskip

\underline{0-cells:} are bimonads $(\A, B, \mu, \eta, \Delta, \Epsilon, \lambda)$ in $\K$ defined in \deref{bimonad}.

\underline{1-cells:} are triples $(F,\psi,\phi): (\A, B, \mu, \eta, \Delta, \Epsilon, \lambda)\to(\A', B', \mu', \eta', \Delta', \Epsilon', \lambda')$ where $F:\A\to\A'$ is a 1-cell in $\K$,
$\psi: B'F\to FB$ and $\phi: FB\to B'F$ are 2-cells in $\K$ so that $(F, \psi): (\A, B, \mu, \eta)\to(\A', B', \mu', \eta')$ is a 1-cell in $\EM^M(\K)$ and
$(F, \phi): (\A, B, \Delta, \Epsilon)\to(\A', B', \Delta', \Epsilon')$ is a 1-cell in $\EM^C(\K)$, that is, the following identities hold:
\vspace{-1,4cm}
\begin{center} \hspace{-0,6cm}
\begin{tabular}{p{7.4cm}p{0cm}p{8cm}}
\begin{equation}\eqlabel{psi laws for bimonads}
\gbeg{3}{5}
\got{1}{B'}\got{1}{B'}\got{1}{F}\gnl
\gcl{1} \glmpt \gnot{\hspace{-0,34cm}\psi} \grmptb \gnl
\glmptb \gnot{\hspace{-0,34cm}\psi} \grmptb \gcl{1} \gnl
\gcl{1} \gmu \gnl
\gob{1}{F} \gob{2}{B}
\gend=
\gbeg{3}{5}
\got{1}{B'}\got{1}{B'}\got{1}{B'}\gnl
\gmu \gcn{1}{1}{1}{0} \gnl
\gvac{1} \hspace{-0,34cm} \glmptb \gnot{\hspace{-0,34cm}\psi} \grmptb  \gnl
\gvac{1} \gcl{1} \gcl{1} \gnl
\gvac{1} \gob{1}{F} \gob{1}{B}
\gend;
\quad
\gbeg{2}{5}
\got{3}{F} \gnl
\gu{1} \gcl{1} \gnl
\glmptb \gnot{\hspace{-0,34cm}\psi} \grmptb \gnl
\gcl{1} \gcl{1} \gnl
\gob{1}{F} \gob{1}{B}
\gend=
\gbeg{3}{5}
\got{1}{F} \gnl
\gcl{1} \gu{1} \gnl
\gcl{2} \gcl{2} \gnl
\gob{1}{F} \gob{1}{B}
\gend
\end{equation} & &
\begin{equation}\eqlabel{phi laws for bimonads}
\gbeg{3}{5}
\got{1}{F} \got{2}{B}\gnl
\gcl{1} \gcmu \gnl
\glmptb \gnot{\hspace{-0,34cm}\phi} \grmptb \gcl{1} \gnl
\gcl{1} \glmptb \gnot{\hspace{-0,34cm}\phi} \grmptb \gnl
\gob{1}{B'} \gob{1}{B'} \gob{1}{F}
\gend=
\gbeg{3}{5}
\got{2}{F} \got{1}{\hspace{-0,2cm}D}\gnl
\gcn{1}{1}{2}{2} \gcn{1}{1}{2}{2} \gnl
\gvac{1} \hspace{-0,34cm} \glmpt \gnot{\hspace{-0,34cm}\phi} \grmptb \gnl
\gvac{1} \hspace{-0,2cm} \gcmu \gcn{1}{1}{0}{1} \gnl
\gvac{1} \gob{1}{B'} \gob{1}{B'} \gob{1}{F}
\gend;
\quad
\gbeg{3}{5}
\got{1}{F} \got{1}{B} \gnl
\gcl{1} \gcl{1} \gnl
\glmptb \gnot{\hspace{-0,34cm}\phi} \grmptb \gnl
\gcu{1} \gcl{1} \gnl
\gob{3}{F}
\gend=
\gbeg{3}{5}
\got{1}{F} \got{1}{B} \gnl
\gcl{1} \gcl{1} \gnl
\gcl{2}  \gcu{1} \gnl
\gob{1}{F}
\gend
\end{equation}
\end{tabular}
\end{center} 
and the following compatibility condition is fulfilled:
\begin{equation}\eqlabel{psi-lambda-phi for bimonads}
\gbeg{3}{5}
\got{1}{B'} \got{1}{F} \got{1}{B} \gnl
\glmptb \gnot{\hspace{-0,34cm}\psi} \grmptb \gcl{1} \gnl
\gcl{1} \glmptb \gnot{\hspace{-0,34cm}\lambda} \grmptb \gnl
\glmptb \gnot{\hspace{-0,34cm}\phi} \grmptb \gcl{1} \gnl
\gob{1}{B'} \gob{1}{F} \gob{1}{B}
\gend=
\gbeg{3}{5}
\got{1}{B'} \got{1}{F} \got{1}{B} \gnl
\gcl{1} \glmptb \gnot{\hspace{-0,34cm}\phi} \grmptb \gnl
\glmptb \gnot{\hspace{-0,34cm}\lambda'} \grmptb \gcl{1} \gnl
\gcl{1} \glmptb \gnot{\hspace{-0,34cm}\psi} \grmptb \gnl
\gob{1}{B'} \gob{1}{F} \gob{1}{B}
\gend
\end{equation}

\medskip

For 0-cells we will write shortly $(\A, B)$. 

\medskip

\underline{2-cells:} are pairs $(\rho_M, \rho_C): (F,\psi,\phi)\to(G,\psi',\phi')$ for 1-cells
$(F,\psi,\phi),(G,\psi',\phi'):(\A, B)\to(\A', B')$,  
where $\rho_M: (F,\psi)\to(G,\psi')$ is a 2-cell in $\EM^M(\K)$ and $\rho_C: (F,\phi)\to(G,\phi')$ is a 2-cell in $\EM^C(\K)$. This means that
there exist 2-cells in $\K$: $\hat\rho_M: F\to GB$ and $\hat\rho_C: FB\to G$ such that the identities:
\vspace{-,4cm}
\begin{center} \hspace{-0,2cm}
\begin{tabular}{p{7.2cm}p{0cm}p{8cm}}
\begin{equation}  \eqlabel{2-cells EM^M}
\gbeg{3}{5}
\gvac{1} \got{1}{B'} \got{1}{F}\gnl
\gvac{1} \glmptb \gnot{\hspace{-0,34cm}\psi} \grmptb \gnl
\glmpb \gnot{\hspace{-0,34cm}\hat\rho_M} \grmptb \gcl{1} \gnl
\gcl{1} \gmu \gnl
\gob{1}{G} \gob{2}{B}
\gend=
\gbeg{3}{5}
\got{1}{B'} \got{3}{F}\gnl
\gcl{1} \glmpb \gnot{\hspace{-0,34cm}\hat\rho_M} \grmptb  \gnl
\glmptb \gnot{\hspace{-0,34cm}\psi'} \grmptb \gcl{1} \gnl
\gcl{1} \gmu \gnl
\gob{1}{G} \gob{2}{B}
\gend
\end{equation} & &
\begin{equation}\eqlabel{2-cells EM^C}
\gbeg{3}{5}
\got{1}{F} \got{2}{B}\gnl
\gcl{1} \gcmu \gnl
\glmptb \gnot{\hspace{-0,34cm}\phi} \grmptb \gcl{1} \gnl
\gcl{1} \glmpt \gnot{\hspace{-0,34cm}\hat\rho_C} \grmptb \gnl
\gob{1}{B'} \gob{3}{G} 
\gend=
\gbeg{3}{5}
\got{1}{F} \got{2}{B}\gnl
\gcl{1} \gcmu \gnl
\glmpt \gnot{\hspace{-0,34cm}\hat\rho_C} \grmptb \gcl{1} \gnl
\gvac{1} \glmptb \gnot{\hspace{-0,34cm}\phi'} \grmptb \gnl
\gvac{1} \gob{1}{B'} \gob{1}{G} 
\gend
\end{equation}
\end{tabular}
\end{center} 
hold.

\begin{rem} \rmlabel{embed into complete}
A 2-cell $\rho_M: (F,\psi)\to(G,\psi')$ in $\Mnd(\K)$ is given by a 2-cell $\rho_M: F\to G$ in $\K$. 
Given a 2-cell $\rho_M$ in $\Mnd(\K)$, if we set $\hat\rho_M=\rho_M\times\eta_B$ in the condition \equref{2-cells EM^M} we get the corresponding law for 2-cells
in $\Mnd(\K)$. This defines an inclusion 2-functor $E_M: \Mnd(\K)\to\EM^M(\K)$ which is identity on 0- and 1-cells and sends a 2-cell $\rho_M: F\to G$ to $\hat\rho_M=\rho_M\times\eta_B$.
Similarly, one has an inclusion 2-functor $E_C: \Comnd(\K)\to\EM^C(\K)$ which is identity on 0- and 1-cells and sends a 2-cell $\rho_C: F\to G$ to $\hat\rho_C=\rho_C\times\Epsilon_B$.
\end{rem}

\begin{rem} \rmlabel{notation 2-cells}
In the sequel we will write $\rho_M$ and $\rho_C$ both for 2-cells $\rho_M: (F,\psi)\to(G,\psi')$ in $\EM^M(\K)$ and $\rho_C: (F,\phi)\to(G,\phi')$ in $\EM^C(\K)$, respectively, 
and for the 2-cells $\hat\rho_M: F\to GB$ and $\hat\rho_C: FB\to G$ in $\K$, respectively, determining the latter ones.
\end{rem}

\underline{Vertical composition of 2-cells:} \hspace{0,2cm} given 2-cells
$$\scalebox{0.84}{
\bfig
\putmorphism(0,20)(1,0)[(F,\psi, \phi)` (G,\psi', \phi')`(\rho_M, \rho_C)]{800}1a
\putmorphism(1020,20)(1,0)[` (H,\psi'', \phi'')`(\rho_M', \rho_C')]{660}1a
\efig}
\hspace{2cm} \textnormal{in $\hspace{0,14cm}\bEM(\K)$}
$$
their vertical composition is given as componentwise vertical composition of 2-cells in the 2-categories $\EM^M(\K)$ and $\EM^C(\K)$. Concretely,
vertical composition of 2-cells:
$
\scalebox{0.84}{
\bfig
\putmorphism(0,20)(1,0)[(F,\psi)` (G,\psi')`\rho_M]{500}1a
\putmorphism(630,20)(1,0)[` (H,\psi'')`\rho_M']{400}1a
\efig}
\qquad$
and
$
\scalebox{0.84}{
\bfig
\putmorphism(0,20)(1,0)[(F,\phi)` (G,\phi')`\rho_C]{500}1a
\putmorphism(630,20)(1,0)[` (H,\phi'')`\rho_C']{400}1a
\efig}
\qquad$
in $\EM^M(\K)$ and $\EM^C(\K)$, respectively, is given by:
\begin{center} \hspace{-1,4cm}
\begin{tabular}{p{5.2cm}p{1cm}p{5cm}}
\begin{equation} \eqlabel{vert comp M}
\rho_M'\comp\rho_M=
\gbeg{3}{5}
\got{3}{F}\gnl
\gvac{1} \glmptb \gnot{\hspace{-0,34cm}\rho_M} \grmpb \gnl
\glmpb \gnot{\hspace{-0,34cm}\rho_M'} \grmptb \gcl{1} \gnl
\gcl{1} \gmu \gnl
\gob{1}{H} \gob{2}{B}
\gend
\end{equation} &    &
\begin{equation}\eqlabel{vert comp C}
\rho_C'\comp\rho_C=
\gbeg{3}{5}
\got{1}{F} \got{2}{B}\gnl
\gcl{1} \gcmu \gnl
\glmpt \gnot{\hspace{-0,34cm}\rho_C} \grmptb \gcl{1} \gnl
\gvac{1} \glmptb \gnot{\hspace{-0,34cm}\rho_C'} \grmpt \gnl
\gob{3}{H}
\gend
\end{equation}
\end{tabular}
\end{center}

\underline{Horizontal composition of 2-cells:} given 2-cells $(\rho_M, \rho_C):(F,\psi,\phi)\to(F', \psi',\phi')$ and $(\rho_M', \rho_C'):(G,\crta\psi, \crta\phi)\to(G', \crta\psi', \crta\phi')$
where $(F,\psi, \phi), (F', \psi', \phi'):(\A, B)\to(\B, B')$ and $(G,\crta\psi, \crta\phi), (G', \crta\psi', \crta\phi'):(\B, B')\to(\C, B'')$ are 1-cells.
Note that we have:
$$\psi: B'F\to FB, \quad \crta\psi: B''G\to GB' \quad\textnormal{and}\quad \phi: FB\to B'F, \quad \crta\phi: GB'\to B''G$$
and similarly for $\psi',\crta\psi'$ and $\phi',\crta\phi'$.
Then we have 2-cells $\rho_M:(F,\psi)\to(F', \psi')$ and $\rho_M':(G,\crta\psi)\to(G', \crta\psi')$ in $\EM^M(\K)$ and 2-cells
$\rho_C:(F,\phi)\to(F', \phi')$ and $\rho_C':(G,\crta\phi)\to(G', \crta\phi')$ in $\EM^C(\K)$.
We first describe the horizontal composition of 1-cells:
$$(G,\crta\psi, \crta\phi)(F,\psi, \phi)=(GF, \hspace{0,3cm}
\gbeg{3}{4}
\got{1}{B''} \got{1}{G} \got{1}{F} \gnl
\glmptb \gnot{\hspace{-0,34cm}\crta\psi} \grmptb \gcl{1} \gnl
\gcl{1} \glmptb \gnot{\hspace{-0,34cm}\psi} \grmptb \gnl
\gob{1}{G} \gob{1}{F} \gob{1}{B}
\gend \hspace{0,1cm}, \hspace{0,3cm}
\gbeg{3}{4}
\got{1}{G} \got{1}{F} \got{1}{B} \gnl
\gcl{1} \glmptb \gnot{\hspace{-0,34cm}\phi} \grmptb \gnl
\glmptb \gnot{\hspace{-0,34cm}\crta\phi} \grmptb \gcl{1} \gnl
\gob{1}{B''} \gob{1}{G} \gob{1}{F}
\gend \hspace{0,1cm}).$$
Now the horizontal composition of 2-cells in $\bEM(\K)$ is given by componentwise horizontal composition of 2-cells in $\EM^M(\K)$ and $\EM^C(\K)$:
\begin{center} 
\begin{tabular}{p{7.2cm}p{0cm}p{8cm}}
\begin{equation} \eqlabel{horiz comp M} \rho_M'\times\rho_M=
\gbeg{4}{6}
\gvac{1} \got{1}{G} \got{3}{F} \gnl
\gvac{1} \glmptb \gnot{\hspace{-0,34cm}\rho_M'} \grmpb \gcl{1}\gnl
\gcn{1}{1}{3}{1} \gvac{1} \glmptb \gnot{\hspace{-0,34cm}\psi} \grmptb \gnl
\gcl{1} \glmpb \gnot{\hspace{-0,34cm}\rho_M} \grmptb \gcl{1} \gnl
\gcl{1} \gcl{1} \gmu \gnl
\gob{1}{G'} \gob{1}{F'} \gob{3}{B}
\gend=
\gbeg{4}{5}
\got{1}{G} \got{3}{F} \gnl
\glmptb \gnot{\hspace{-0,34cm}\rho_M'} \grmpb \glmptb \gnot{\hspace{-0,34cm}\rho_M} \grmpb  \gnl
\gcl{1} \glmptb \gnot{\hspace{-0,34cm}\psi'} \grmptb \gcl{1} \gnl
\gcl{1} \gcl{1} \gmu \gnl
\gob{1}{G'} \gob{1}{F'} \gob{2}{B}
\gend
\end{equation} &   &
\begin{equation}\eqlabel{horiz comp C}
\rho_C'\times\rho_C=
\gbeg{4}{6}
\got{1}{G} \got{1}{F} \got{2}{B} \gnl
\gcl{1} \gcl{1} \gcmu \gnl
\gcl{1} \glmpt \gnot{\hspace{-0,34cm}\rho_C} \grmptb \gcl{1} \gnl
\gcn{1}{1}{1}{3} \gvac{1} \glmptb \gnot{\hspace{-0,34cm}\phi'} \grmptb \gnl
\gvac{1} \glmpt \gnot{\hspace{-0,34cm}\rho_C'} \grmptb \gcl{1}\gnl
\gvac{2} \gob{1}{G'} \gob{1}{F'}
\gend=
\gbeg{4}{5}
\got{1}{G} \got{1}{F} \got{2}{B} \gnl
\gcl{1} \gcl{1} \gcmu \gnl
\gcl{1} \glmptb \gnot{\hspace{-0,34cm}\phi} \grmptb \gcl{1} \gnl
\glmpt \gnot{\hspace{-0,34cm}\rho_C'} \grmptb \glmpt \gnot{\hspace{-0,34cm}\rho_C} \grmptb  \gnl
\gob{3}{G'} \gob{1}{F'}
\gend
\end{equation}
\end{tabular}
\end{center}

\underline{Identity of 0-cells:} the identity 1-cell on a 0-cell $(\A, B)$ is given by: $(\id_{\A}, \Id_B, \Id_B): (\A, B)\to(\A, B)$.

\underline{Identity of 1-cells:} the identity 2-cell on a 1-cell $(F,\psi, \phi): (\A, B)\to(\B, S)$ is given by the pair of 2-cells $(\Id_M, \Id_C)$ which in turn are determined by:
$\Id_M=\Id_F\times\eta_B: F\to FB$ and $\Id_C=\Id_F\times\Epsilon_B: FB\to F$. In string diagrams:
\begin{equation} \eqlabel{unit 2-cells}
\Id_M=
\gbeg{3}{4}
\got{1}{F}\gnl
\gcl{1} \gu{1} \gnl
\gcl{1} \gcl{1} \gnl
\gob{1}{F} \gob{1}{B}
\gend\qquad\textnormal{and}\qquad
\Id_M=
\gbeg{3}{4}
\got{1}{F} \got{1}{B} \gnl
\gcl{1} \gcl{1} \gnl
\gcl{1} \gcu{1} \gnl
\gob{1}{F}
\gend
\end{equation}

\begin{rem} \rmlabel{sides}
The 2-cell part of a 1-cell in $\Mnd(\K)$, and thus also in $\EM^M(\K)$, can be taken to act in any of the two directions. This leads to left and right versions of the 2-categories $\Mnd(\K)$
and $\EM^M(\K)$. This in turn determines the side used in the definition of $\EM^C(\K)$. We use the version fixed in \cite{Street} and \cite{LS} without further mention of the sides. This would 
be the left hand-side version. 
\end{rem}

\subsection{The 2-category of bimonads, inclusion and underlying 2-functors} \sslabel{Inc and proj}

We have not defined the 2-category $\Bimnd(\K)$ of bimonads in $\K$ so far. Its 0- and 1-cells are the same as in $\bEM(\K)$ and the 2-cells are pairs $(\rho_M, \rho_C)$ where $\rho_M$ is a 2-cell in $\Mnd(\K)$
and $\rho_C$ is a 2-cell in $\Comnd(\K)$. Then we have inclusion and projection 2-functors $E_B: \Bimnd(\K)\to\bEM(\K)$ and $\pi: \bEM(\K)\to\Bimnd(\K)$ where both are identities on 0- and 1-cells.
For a 2-cell $(\rho_M, \rho_C)$ from $\Bimnd(\K)$ we have: $E_B((\rho_M, \rho_C))=(\rho_M\times\eta_B, \rho_C\times\Epsilon_B)$, while for a
2-cell $(\tau_M, \tau_C)$ from $\bEM(\K)$ we have: $\pi((\tau_M, \tau_C))=((G\times\Epsilon_B)\comp\tau_M, \tau_C\comp(F\times\eta_B)$. 
Then it is clear that $\pi\comp E_B=\Id$.

We also have the inclusion 2-functor $\Inc:\K\to\Bimnd(\K)$, which sends a 0-cell $\A$ into the identity bimonad $(\A, \Id_{\A})$ and so on, and the underlying functor $\U: \Bimnd(\K)\to\K$,
so that $\Inc$ is right 2-adjoint to $\U$. 


\begin{defn}
We say that $\K$ admits the Eilenberg--Moore construction for bimonads if the 2-functor $\Inc:\K\to\Bimnd(\K)$ has a right 2-adjoint.
\end{defn}

We conjecture that the 2-category $\bEM(\K)$ is the free completion of $\Bimnd(\K)$ under the Eilenberg-Moore construction for bimonads.

\section{Biwreaths}  \selabel{biwreaths}

We define a biwreath as a bimonad in the 2-category $\bEM(\K)$, in other words as a 0-cell in the 2-category $\bEM(\bEM(\K))$. Let us write out what this means.

On a 0-cell $(\A, B, \mu, \eta, \Delta, \Epsilon, \lambda)\equiv(\A, B)$ in $\bEM(\K)$, which is a bimonad in $\K$, \deref{bimonad}, we have an endomorphism 1-cell
$(F,\psi,\phi): (\A, B)\to(\A, B)$, that is a 1-cell $F:\A\to\A$ in $\K$, with 2-cells $\psi: BF\to FB$ and $\phi: FB\to BF$ in $\K$ so that:
\begin{center} \hspace{-0,8cm}
\begin{tabular}{p{7.4cm}p{0cm}p{8cm}}
\begin{equation}\eqlabel{psi laws for B}
\gbeg{3}{5}
\got{1}{B}\got{1}{B}\got{1}{F}\gnl
\gcl{1} \glmpt \gnot{\hspace{-0,34cm}\psi} \grmptb \gnl
\glmptb \gnot{\hspace{-0,34cm}\psi} \grmptb \gcl{1} \gnl
\gcl{1} \gmu \gnl
\gob{1}{F} \gob{2}{B}
\gend=
\gbeg{3}{5}
\got{1}{B}\got{1}{B}\got{1}{F}\gnl
\gmu \gcn{1}{1}{1}{0} \gnl
\gvac{1} \hspace{-0,34cm} \glmptb \gnot{\hspace{-0,34cm}\psi} \grmptb  \gnl
\gvac{1} \gcl{1} \gcl{1} \gnl
\gvac{1} \gob{1}{F} \gob{1}{B}
\gend;
\quad
\gbeg{2}{5}
\got{3}{F} \gnl
\gu{1} \gcl{1} \gnl
\glmptb \gnot{\hspace{-0,34cm}\psi} \grmptb \gnl
\gcl{1} \gcl{1} \gnl
\gob{1}{F} \gob{1}{B}
\gend=
\gbeg{2}{5}
\got{1}{F} \gnl
\gcl{1} \gu{1} \gnl
\gcl{2} \gcl{2} \gnl
\gob{1}{F} \gob{1}{B}
\gend
\end{equation} & &
\begin{equation}\eqlabel{phi laws for B}
\gbeg{3}{5}
\got{1}{F} \got{2}{B}\gnl
\gcl{1} \gcmu \gnl
\glmptb \gnot{\hspace{-0,34cm}\phi} \grmptb \gcl{1} \gnl
\gcl{1} \glmptb \gnot{\hspace{-0,34cm}\phi} \grmptb \gnl
\gob{1}{B} \gob{1}{B} \gob{1}{F}
\gend=
\gbeg{3}{5}
\got{2}{F} \got{1}{\hspace{-0,2cm}B}\gnl
\gcn{1}{1}{2}{2} \gcn{1}{1}{2}{2} \gnl
\gvac{1} \hspace{-0,34cm} \glmpt \gnot{\hspace{-0,34cm}\phi} \grmptb \gnl
\gvac{1} \hspace{-0,2cm} \gcmu \gcn{1}{1}{0}{1} \gnl
\gvac{1} \gob{1}{B} \gob{1}{B} \gob{1}{F}
\gend;
\quad
\gbeg{2}{5}
\got{1}{F} \got{1}{B} \gnl
\gcl{1} \gcl{1} \gnl
\glmptb \gnot{\hspace{-0,34cm}\phi} \grmptb \gnl
\gcu{1} \gcl{1} \gnl
\gob{3}{F}
\gend=
\gbeg{2}{5}
\got{1}{F} \got{1}{B} \gnl
\gcl{1} \gcl{1} \gnl
\gcl{2}  \gcu{1} \gnl
\gob{1}{F}
\gend
\end{equation}
\end{tabular}
\end{center} 
and
\begin{equation}\eqlabel{psi-lambda-phi}
\gbeg{3}{5}
\got{1}{B} \got{1}{F} \got{1}{B} \gnl
\glmptb \gnot{\hspace{-0,34cm}\psi} \grmptb \gcl{1} \gnl
\gcl{1} \glmptb \gnot{\hspace{-0,34cm}\lambda} \grmptb \gnl
\glmptb \gnot{\hspace{-0,34cm}\phi} \grmptb \gcl{1} \gnl
\gob{1}{B} \gob{1}{F} \gob{1}{B}
\gend=
\gbeg{3}{5}
\got{1}{B} \got{1}{F} \got{1}{B} \gnl
\gcl{1} \glmptb \gnot{\hspace{-0,34cm}\phi} \grmptb \gnl
\glmptb \gnot{\hspace{-0,34cm}\lambda} \grmptb \gcl{1} \gnl
\gcl{1} \glmptb \gnot{\hspace{-0,34cm}\psi} \grmptb \gnl
\gob{1}{B} \gob{1}{F} \gob{1}{B}
\gend
\end{equation}
hold, together with bimonad structure morphisms, that is, 2-cells in $\bEM(\K)$:
$$(\mu_M, \mu_C): (F,\psi,\phi)(F,\psi,\phi)\to(F,\psi,\phi)$$
$$(\eta_M, \eta_C): (\Id_{\A}, id_B, id_B)\to(F,\psi,\phi)$$
\begin{equation} \eqlabel{structure 2-cells}
(\Delta_M, \Delta_C): (F,\psi,\phi)\to(F,\psi,\phi)(F,\psi,\phi)
\end{equation}
$$(\Epsilon_M, \Epsilon_C): (F,\psi,\phi)\to(\Id_{\A}, id_B, id_B)$$
$$(\lambda_M, \lambda_C): (F,\psi,\phi)(F,\psi,\phi)\to(F,\psi,\phi)(F,\psi,\phi)$$
which are given by the following 2-cells in $\K$:
\begin{equation} \eqlabel{structure 2-cells in K}
\mu_M:FF\to FB, \quad \eta_M:\Id_{\A}\to FB, \quad \Delta_M: F\to FFB, \quad \Epsilon_M: F\to B, \quad \lambda_M:FF\to FFB
\end{equation}
$$\mu_C:FFB\to F, \quad \eta_C:B\to F, \quad \Delta_C: FB\to FF, \quad \Epsilon_C: FB\to\Id_{\A}, \quad \lambda_C:FFB\to FF$$
which obey (the 2-cell conditions):
\begin{center} \hspace{-1,4cm}
\begin{tabular}{p{6.2cm}p{1cm}p{7cm}}
\begin{equation} \eqlabel{2-cell mu_M}
\gbeg{3}{6}
\got{1}{B} \got{1}{F} \got{1}{F} \gnl
\glmptb \gnot{\hspace{-0,34cm}\psi} \grmptb \gcl{1} \gnl
\gcl{1} \glmptb \gnot{\hspace{-0,34cm}\psi} \grmptb \gnl
\glmptb \gnot{\hspace{-0,34cm}\mu_M} \grmptb \gcl{1} \gnl
\gcl{1} \gmu \gnl
\gob{1}{F} \gob{2}{B}
\gend=
\gbeg{3}{5}
\got{1}{B} \got{1}{F} \got{1}{F}\gnl
\gcl{1} \glmptb \gnot{\hspace{-0,34cm}\mu_M} \grmptb \gnl
\glmptb \gnot{\hspace{-0,34cm}\psi} \grmptb \gcl{1} \gnl
\gcl{1} \gmu \gnl
\gob{1}{F} \gob{2}{B}
\gend
\end{equation} &   &
\begin{equation}\eqlabel{2-cell mu_C}
\gbeg{4}{6}
\got{1}{F} \got{1}{F} \got{2}{B} \gnl
\gcl{1} \gcl{1} \gcmu \gnl
\gcl{1} \glmptb \gnot{\hspace{-0,34cm}\phi} \grmptb \gcl{2} \gnl
\glmptb \gnot{\hspace{-0,34cm}\phi} \grmptb \gcl{1} \gnl
\gcl{1} \glmpt \gnot{\mu_C} \gcmptb \grmpt \gnl
\gob{1}{B} \gob{3}{F}
\gend=
\gbeg{4}{6}
\got{1}{F} \got{1}{F} \got{2}{B} \gnl
\gcl{1} \gcl{1} \gcmu \gnl
\glmpt \gnot{\mu_C} \gcmpt \grmptb \gcl{1} \gnl
\gvac{2} \glmptb \gnot{\hspace{-0,34cm}\phi} \grmptb \gnl
\gvac{2} \gcl{1} \gcl{1} \gnl
\gvac{2} \gob{1}{B} \gob{1}{F}
\gend
\end{equation}
\end{tabular}
\end{center}

\begin{center} \hspace{-1cm}
\begin{tabular}{p{5.2cm}p{3cm}p{5cm}}
\begin{equation}\eqlabel{2-cell eta_M}
\gbeg{3}{4}
\got{1}{} \got{3}{B}\gnl
\glmpb \gnot{\hspace{-0,34cm}\eta_M} \grmpb \gcl{1} \gnl
\gcl{1} \gmu \gnl
\gob{1}{F} \gob{2}{B}
\gend=
\gbeg{3}{5}
\got{1}{B}\gnl
\gcl{1} \glmpb \gnot{\hspace{-0,34cm}\eta_M} \grmpb \gnl
\glmptb \gnot{\hspace{-0,34cm}\psi} \grmptb \gcl{1} \gnl
\gcl{1} \gmu \gnl
\gob{1}{F} \gob{2}{B}
\gend
\end{equation} &    &
\begin{equation}\eqlabel{2-cell eta_C}
\gbeg{2}{5}
\got{2}{B}\gnl
\gcmu \gnl
\gcl{1} \gbmp{\eta_C} \gnl
\gcl{1} \gcl{1} \gnl
\gob{1}{B} \gob{1}{F}
\gend=
\gbeg{3}{5}
\got{2}{B}\gnl
\gcmu \gnl
\gbmp{\eta_C} \gcl{1} \gnl
\glmptb \gnot{\hspace{-0,34cm}\phi} \grmptb  \gnl
\gob{1}{B} \gob{1}{F}
\gend
\end{equation}
\end{tabular}
\end{center}

\begin{center} \hspace{-2,6cm}
\begin{tabular}{p{7.2cm}p{2cm}p{5cm}}
\begin{equation} \eqlabel{2-cell Delta_M}
\gbeg{4}{5}
\gvac{2} \got{1}{B} \got{1}{F} \gnl
\gvac{2} \glmptb \gnot{\hspace{-0,34cm}\psi} \grmptb \gnl
\glmpb \gnot{\Delta_M} \gcmpb \grmptb \gcl{1} \gnl
\gcl{1} \gcl{1} \gmu \gnl
\gob{1}{F} \gob{1}{F} \gob{2}{B}
\gend=
\gbeg{3}{6}
\got{1}{B} \got{1}{F} \gnl
\gcl{1} \glmptb \gnot{\Delta_M} \gcmpb \grmpb \gnl
\glmptb \gnot{\hspace{-0,34cm}\psi} \grmptb \gcl{1} \gcl{2} \gnl
\gcl{1} \glmptb \gnot{\hspace{-0,34cm}\psi} \grmptb \gnl
\gcl{1} \gcl{1} \gmu \gnl
\gob{1}{F} \gob{1}{F} \gob{2}{B}
\gend
\end{equation} &    &  \vspace{-0,4cm}
\begin{equation}\eqlabel{2-cell Delta_C}
\gbeg{3}{5}
\got{1}{F} \got{2}{B} \gnl
\gcl{1} \gcmu \gnl
\glmptb \gnot{\hspace{-0,34cm}\phi} \grmptb \gcl{1} \gnl
\gcl{1} \glmptb \gnot{\hspace{-0,34cm}\Delta_C} \grmptb \gnl
\gob{1}{B} \gob{1}{F} \gob{1}{F}
\gend=
\gbeg{3}{6}
\got{1}{F} \got{2}{B} \gnl
\gcl{1} \gcmu \gnl
\glmptb \gnot{\hspace{-0,34cm}\Delta_C} \grmptb \gcl{1} \gnl
\gcl{1} \glmptb \gnot{\hspace{-0,34cm}\phi} \grmptb \gnl
\glmptb \gnot{\hspace{-0,34cm}\phi} \grmptb \gcl{1} \gnl
\gob{1}{B} \gob{1}{F} \gob{1}{F}
\gend
\end{equation}
\end{tabular}
\end{center}

\begin{center} \hspace{-1,4cm}
\begin{tabular}{p{5.2cm}p{2.6cm}p{5cm}}
\begin{equation}\eqlabel{2-cell Epsilon_M}
\gbeg{3}{5}
\got{1}{B} \got{1}{F} \gnl
\glmptb \gnot{\hspace{-0,34cm}\psi} \grmptb \gnl
\gbmp{\Epsilon_M} \gcl{1} \gnl
\gmu \gnl
\gob{2}{B}
\gend=
\gbeg{3}{5}
\got{1}{B} \got{1}{F} \gnl
\gcl{1} \gcl{1} \gnl
\gcl{1} \gbmp{\Epsilon_M} \gnl
\gmu \gnl
\gob{2}{B}
\gend
\end{equation} &    &
\begin{equation}\eqlabel{2-cell Epsilon_C}
\gbeg{3}{5}
\got{1}{F} \got{2}{B}\gnl
\gcl{1} \gcmu \gnl
\glmpb \gnot{\hspace{-0,34cm}\phi} \grmpb \gcl{1} \gnl
\gcl{1} \glmpt \gnot{\hspace{-0,34cm}\Epsilon_C} \grmpt \gnl
\gob{1}{B}
\gend=
\gbeg{3}{5}
\got{1}{F} \got{2}{B}\gnl
\gcl{1} \gcmu \gnl
\glmpt \gnot{\hspace{-0,34cm}\Epsilon_C} \grmpt \gcl{2} \gnl
\gob{5}{B}
\gend
\end{equation}
\end{tabular}
\end{center}

\begin{center} 
\begin{tabular}{p{6cm}p{1cm}p{7cm}}
\begin{equation} \eqlabel{2-cell lambda_M}
\gbeg{4}{6}
\gvac{1} \got{1}{B} \got{1}{F} \got{1}{F} \gnl
\gvac{1} \glmptb \gnot{\hspace{-0,34cm}\psi} \grmptb \gcl{1} \gnl
\gvac{1} \gcl{1} \glmptb \gnot{\hspace{-0,34cm}\psi} \grmptb \gnl
\glmpb \gnot{\lambda_M} \gcmptb \grmptb \gcl{1} \gnl
\gcl{1} \gcl{1} \gmu \gnl
\gob{1}{F} \gob{1}{F} \gob{2}{B}
\gend=
\gbeg{4}{6}
\got{1}{B} \got{1}{F} \got{1}{F} \gnl
\gcl{1} \glmptb \gnot{\lambda_M} \gcmptb \grmpb \gnl
\glmptb \gnot{\hspace{-0,34cm}\psi} \grmptb \gcl{1} \gcl{1} \gnl
\gcl{1} \glmptb \gnot{\hspace{-0,34cm}\psi} \grmptb \gcl{1} \gnl
\gcl{1} \gcl{1} \gmu \gnl
\gob{1}{F} \gob{1}{F} \gob{2}{B}
\gend\end{equation} &    &  \vspace{0,1cm}
\begin{equation}\eqlabel{2-cell lambda_C}
\gbeg{4}{6}
\got{1}{F} \got{1}{F} \got{2}{B} \gnl
\gcl{1} \gcl{1} \gcmu \gnl
\gcl{1} \glmptb \gnot{\hspace{-0,34cm}\phi} \grmptb \gcl{2} \gnl
\glmptb \gnot{\hspace{-0,34cm}\phi} \grmptb \gcl{1} \gnl
\gcl{1} \glmptb \gnot{\lambda_C} \gcmptb \grmpt \gnl
\gob{1}{B} \gob{1}{F} \gob{1}{F}
\gend=
\gbeg{4}{6}
\got{1}{F} \got{1}{F} \got{2}{B} \gnl
\gcl{1} \gcl{1} \gcmu \gnl
\glmpt \gnot{\lambda_C} \gcmpt \grmptb \gcl{1} \gnl
\gvac{1} \gcl{1} \glmptb \gnot{\hspace{-0,34cm}\phi} \grmptb \gnl
\gvac{1} \glmptb \gnot{\hspace{-0,34cm}\phi} \grmptb \gcl{1} \gnl
\gvac{1} \gob{1}{B} \gob{1}{F} \gob{1}{F}
\gend \quad ;
\end{equation}
\end{tabular}
\end{center}
the 2-cells \equref{structure 2-cells} need to satisfy the monad, comonad and bimonad compatibility laws in $\bEM(\K)$. Having in mind that the two components of the identity 2-cell on $(F, \psi, \phi)$
are given by $\Id_F\times\eta_B$ and $\Id_F\times\Epsilon_B$, and the way how vertical and horizontal compositions are defined in $\bEM(\K)$, these laws translate to the following conditions in $\K$: 
\vspace{-0,4cm}
\begin{center} \hspace{0,4cm}
\begin{tabular}{p{5.2cm}p{2cm}p{6.8cm}}
\begin{equation} \eqlabel{monad law mu_M}
\gbeg{3}{5}
\got{1}{F} \got{1}{F} \got{1}{F}\gnl
\gcl{1} \glmptb \gnot{\hspace{-0,34cm}\mu_M} \grmptb \gnl
\glmptb \gnot{\hspace{-0,34cm}\mu_M} \grmptb \gcl{1} \gnl
\gcl{1} \gmu \gnl
\gob{1}{F} \gob{2}{B}
\gend=
\gbeg{3}{6}
\got{1}{F} \got{1}{F} \got{1}{F} \gnl
\glmptb \gnot{\hspace{-0,34cm}\mu_M} \grmptb \gcl{1} \gnl
\gcl{1} \glmptb \gnot{\hspace{-0,34cm}\psi} \grmptb \gnl
\glmptb \gnot{\hspace{-0,34cm}\mu_M} \grmptb \gcl{1} \gnl
\gcl{1} \gmu \gnl
\gob{1}{F} \gob{2}{B}
\gend
\end{equation} &  & \vspace{0,1cm}
\begin{equation}\eqlabel{monad law eta_M}
\gbeg{3}{6}
\got{1}{} \got{3}{F}\gnl
\glmpb \gnot{\hspace{-0,34cm}\eta_M} \grmpb \gcl{1} \gnl
\gcl{1} \glmptb \gnot{\hspace{-0,34cm}\psi} \grmptb \gnl
\glmptb \gnot{\hspace{-0,34cm}\mu_M} \grmptb \gcl{1} \gnl
\gcl{1} \gmu \gnl
\gob{1}{F} \gob{2}{B}
\gend=
\gbeg{2}{4}
\got{1}{F}\gnl
\gcl{1} \gu{1} \gnl
\gcl{1} \gcl{1} \gnl
\gob{1}{F} \gob{1}{B}
\gend=
\gbeg{3}{5}
\got{1}{F}\gnl
\gcl{1} \glmpb \gnot{\hspace{-0,34cm}\eta_M} \grmpb \gnl
\glmptb \gnot{\hspace{-0,34cm}\mu_M} \grmptb \gcl{1} \gnl
\gcl{1} \gmu \gnl
\gob{1}{F} \gob{2}{B}
\gend
\end{equation} 
\end{tabular}
\end{center} \vspace{-0,4cm}
$$ \textnormal{ \hspace{-1cm} \footnotesize monad law for $\mu_M$}  \hspace{6,5cm}  \textnormal{\footnotesize monad law for $\eta_M$} $$ \vspace{-0,7cm}

\begin{center} \hspace{-1,4cm}
\begin{tabular}{p{8cm}p{1cm}p{6.8cm}}
\begin{equation} \eqlabel{monad law mu_C}
\gbeg{5}{6}
\got{1}{F} \got{1}{F} \got{1}{F} \got{2}{B} \gnl
\gcl{1} \gcl{1} \gcl{1} \gcmu \gnl
\gcl{1} \glmpt \gnot{\mu_C} \gcmptb \grmpt \gcl{1} \gnl
\glmpt \gnot{\hspace{0,8cm} \mu_C} \gcmp \gcmptb \gcmp \grmp  \gnl
\gvac{2} \gcl{1}  \gnl
\gob{5}{F}
\gend=
\gbeg{4}{6}
\got{1}{F} \got{1}{F} \got{1}{F} \got{2}{B} \gnl
\gcl{2} \gcl{2} \gcl{1} \gcmu \gnl
\gvac{2} \glmptb \gnot{\hspace{-0,34cm}\phi} \grmptb \gcl{2} \gnl
\glmpt \gnot{\mu_C} \gcmpt \grmpt \gcl{1} \gnl
\gvac{2} \glmptb \gnot{\mu_C} \gcmpt \grmpt \gnl
\gob{5}{F}
\gend
\end{equation} &  &  \vspace{-0,8cm}
\begin{equation}\eqlabel{monad law eta_C}
\gbeg{3}{6}
\got{1}{F} \got{2}{B} \gnl
\gcl{1}\gcmu \gnl
\gcl{1} \gbmp{\s\eta_{C}} \gcl{1} \gnl
\glmpt \gnot{\mu_C} \gcmptb \grmpt  \gnl
\gvac{1} \gcl{1} \gnl
\gob{3}{F}
\gend=
\gbeg{2}{4}
\got{1}{F} \got{1}{B} \gnl
\gcl{1} \gcl{1} \gnl
\gcl{1} \gcu{1} \gnl
\gob{1}{F}
\gend=
\gbeg{4}{6}
\got{1}{F} \got{2}{B} \gnl
\gcl{1}\gcmu \gnl
\glmptb \gnot{\hspace{-0,34cm}\phi} \grmptb \gcl{2} \gnl
\gbmp{\s\eta_{C}} \gcl{1} \gnl
\glmpt \gnot{\mu_C} \gcmptb \grmpt  \gnl
\gob{3}{F}
\gend
\end{equation} \vspace{-0,4cm}
\\ {\hspace{2,5cm} \footnotesize monad law for $\mu_C$} & &  {\hspace{2cm} \footnotesize monad law for $\eta_C$}
\end{tabular}
\end{center}

\begin{center} \hspace{-0,4cm}
\begin{tabular}{p{7.2cm}p{1cm}p{6.8cm}}
\begin{equation} \eqlabel{comonad law Delta_M}
\gbeg{5}{6}
\got{7}{F} \gnl
\gvac{2} \glmp \gnot{\Delta_M} \gcmptb \grmpb  \gnl
\glmpb \gnot{\Delta_M} \gcmpb \grmptb \gcl{1} \gcl{1} \gnl
\gcl{2} \gcl{2}  \glmptb \gnot{\hspace{-0,34cm}\psi} \grmptb \gcl{1} \gnl
\gvac{2} \gcl{1} \gmu \gnl
\gob{1}{F} \gob{1}{F} \gob{1}{F} \gob{2}{B} \gnl
\gend=
\gbeg{5}{6}
\got{3}{F} \gnl
\glmpb \gnot{\Delta_M} \gcmptb \grmpb  \gnl
\gcl{1} \gcl{1} \gcn{1}{1}{1}{5} \gnl
\gcl{1} \glmptb \gnot{\Delta_M} \gcmpb \grmpb \gcl{1} \gnl
\gcl{1} \gcl{1} \gcl{1} \gmu \gnl
\gob{1}{F} \gob{1}{F} \gob{1}{F} \gob{2}{B} \gnl
\gend
\end{equation} &  & \vspace{-0,4cm}
\begin{equation}\eqlabel{comonad law Epsilon_M}
\gbeg{3}{6}
\got{3}{F} \gnl
\glmpb \gnot{\Delta_M} \gcmptb \grmpb  \gnl
\gbmp{\Epsilon_M} \gcl{1} \gcl{2} \gnl
\glmptb \gnot{\hspace{-0,34cm}\psi} \grmptb \gnl
\gcl{1} \gmu \gnl
\gob{1}{F} \gob{2}{B} \gnl
\gend=
\gbeg{2}{4}
\got{1}{F} \gnl
\gcl{1} \gu{1} \gnl
\gcl{1} \gcl{1} \gnl
\gob{1}{F} \gob{1}{B} \gnl
\gend=
\gbeg{3}{6}
\got{3}{F} \gnl
\gvac{1} \gcl{1} \gnl
\glmpb \gnot{\Delta_M} \gcmptb \grmpb  \gnl
\gcl{1} \gbmp{\Epsilon_M} \gcl{1} \gnl
\gcl{1} \gmu \gnl
\gob{1}{F} \gob{2}{B} \gnl
\gend
\end{equation}
\\ {\qquad\qquad \footnotesize comonad law for $\Delta_M$} & &  {\qquad\qquad \footnotesize comonad law for $\Epsilon_M$}
\end{tabular}
\end{center}

\begin{center} 
\begin{tabular}{p{6cm}p{2cm}p{6.8cm}}
\begin{equation} \eqlabel{comonad law Delta_C}
\gbeg{3}{6}
\got{1}{F} \got{2}{B}\gnl
\gcl{1} \gcmu \gnl
\glmptb \gnot{\hspace{-0,34cm}\Delta_C} \grmptb \gcl{1} \gnl
\gcl{1} \glmptb \gnot{\hspace{-0,34cm}\phi} \grmptb \gnl
\glmptb \gnot{\hspace{-0,34cm}\Delta_C} \grmptb \gcl{1} \gnl
\gob{1}{F}\gob{1}{F}  \gob{1}{F}
\gend=
\gbeg{3}{5}
\got{1}{F} \got{2}{B}\gnl
\gcl{1} \gcmu \gnl
\glmptb \gnot{\hspace{-0,34cm}\Delta_C} \grmptb \gcl{1} \gnl
\gcl{1} \glmptb \gnot{\hspace{-0,34cm}\Delta_C} \grmptb \gnl
\gob{1}{F}\gob{1}{F}  \gob{1}{F}
\gend
\end{equation} &  &
\begin{equation}\eqlabel{comonad law Epsilon_C}
\gbeg{3}{6}
\got{1}{F} \got{2}{B}\gnl
\gcl{1} \gcmu \gnl
\glmptb \gnot{\hspace{-0,34cm}\Delta_C} \grmptb \gcl{1} \gnl
\gcl{1} \glmptb \gnot{\hspace{-0,34cm}\phi} \grmptb \gnl
\glmpt \gnot{\hspace{-0,34cm}\Epsilon_C} \grmpt \gcl{1} \gnl
\gob{5}{F}
\gend=
\gbeg{2}{4}
\got{1}{F} \got{1}{B} \gnl
\gcl{1} \gcl{1} \gnl
\gcl{1} \gcu{1} \gnl
\gob{1}{F}
\gend=
\gbeg{3}{5}
\got{1}{F} \got{2}{B}\gnl
\gcl{1} \gcmu \gnl
\glmptb \gnot{\hspace{-0,34cm}\Delta_C} \grmptb \gcl{1} \gnl
\gcl{1} \glmpt \gnot{\hspace{-0,34cm}\Epsilon_C} \grmpt \gnl
\gob{1}{F} \gnl
\gend
\end{equation}
\end{tabular}
\end{center} \vspace{-0,5cm}
$$ \textnormal{ \footnotesize comonad law for $\Delta_C$}  \hspace{5,5cm}  \textnormal{\footnotesize comonad law for $\Epsilon_C$} $$ 
finally, we have the 8 bimonad compatibilities (from \deref{bimonad}), we first list them for the monad- {\em i.e.} first components of the 2-cells, and then for their comonad- {\em i.e.} second components:
\begin{center} \hspace{-1,4cm}
\begin{tabular} {p{4.5cm}p{-7cm}p{5.8cm}p{-1cm}p{3.4cm}} 
\begin{equation}\eqlabel{1 lambda_M}
\gbeg{2}{4}
\got{1}{F} \got{1}{F} \gnl
\gbmp{\Epsilon_M} \gbmp{\Epsilon_M} \gnl
\gmu \gnl
\gob{2}{B} \gnl
\gend=
\gbeg{2}{5}
\got{1}{F} \got{1}{F} \gnl
\glmptb \gnot{\hspace{-0,34cm}\mu_M} \grmptb \gnl
\gbmp{\Epsilon_M} \gcl{1} \gnl
\gmu \gnl
\gob{2}{B} \gnl
\gend
\end{equation} & \qquad \qquad &
\begin{equation}\eqlabel{2 lambda_M}
\gbeg{4}{5}
\got{1}{} \gnl
\gvac{2} \glmpb \gnot{\hspace{-0,34cm}\eta_M} \grmpb \gnl
\glmpb \gnot{\Delta_M} \gcmpb \grmptb \gcl{1} \gnl
\gcl{1} \gcl{1} \gmu \gnl
\gob{1}{F} \gob{1}{F} \gob{2}{B} \gnl
\gend=
\gbeg{4}{5}
\got{1}{} \gnl
\glmpb \gnot{\hspace{-0,34cm}\eta_M} \grmpb \glmpb \gnot{\hspace{-0,34cm}\eta_M} \grmpb \gnl
\gcl{1} \glmptb \gnot{\hspace{-0,34cm}\psi} \grmptb \gcl{1} \gnl
\gcl{1} \gcl{1} \gmu \gnl
\gob{1}{F} \gob{1}{F} \gob{2}{B} \gnl
\gend
\end{equation} & \qquad\quad\quad &
\begin{equation}\eqlabel{3 lambda_M}
\gbeg{2}{5}
\got{1}{} \gnl
\glmpb \gnot{\hspace{-0,34cm}\eta_M} \grmpb \gnl
\gbmp{\Epsilon_M} \gcl{1} \gnl
\gmu \gnl
\gob{2}{B} \gnl
\gend=
\gbeg{2}{5}
\got{1}{} \gnl
\gu{1} \gnl
\gcl{2} \gnl
\gob{1}{B} \gnl
\gend
\end{equation}
\\ {\qquad\footnotesize $\Epsilon_M-\mu_M$ compatibility} & &  {\qquad \footnotesize $\Delta_M-\eta_M$ compatibility}  & &  { \footnotesize $\Epsilon_M-\eta_M$ compatibility}
\end{tabular}
\end{center}

\begin{center} 
\begin{tabular} {p{7cm}p{0cm}p{7.2cm}} 
\begin{equation}\eqlabel{4 lambda_M}
\gbeg{5}{8}
\gvac{1} \got{1}{F} \got{1}{F} \got{1}{F} \gnl
\gvac{1} \gcl{1} \glmptb \gnot{\lambda_M} \gcmptb \grmpb \gnl
\glmpb \gnot{\lambda_M} \gcmptb \grmptb \gcl{1} \gcl{3} \gnl
\gcl{1} \gcl{1} \glmptb \gnot{\hspace{-0,34cm}\psi} \grmptb \gnl
\gcl{1} \glmptb \gnot{\hspace{-0,34cm}\mu_M} \grmptb \gcl{1} \gnl
\gcl{1} \gcl{1} \gmu \gcn{1}{1}{1}{0} \gnl
\gcl{1} \gcl{1} \gvac{1} \hspace{-0,34cm} \gmu \gnl
\gob{2}{F} \gob{1}{\hspace{-0,34cm}F} \gob{2}{B} \gnl
\gend=
\gbeg{5}{6}
\gvac{1} \got{1}{F} \got{1}{F} \got{1}{F} \gnl
\gvac{1} \glmptb \gnot{\hspace{-0,34cm}\mu_M} \grmptb \gcl{1} \gnl
\gvac{1} \gcl{1} \glmptb \gnot{\hspace{-0,34cm}\psi} \grmptb \gnl
\glmpb \gnot{\lambda_M} \gcmptb \grmptb \gcl{1} \gnl
\gcl{1} \gcl{1} \gmu \gnl
\gob{1}{F} \gob{1}{F} \gob{2}{B} \gnl
\gend
\end{equation} & \qquad \qquad &  \vspace{0,4cm}
\begin{equation}\eqlabel{5 lambda_M}
\gbeg{4}{6}
\got{7}{F} \gnl
\gvac{1} \glmpb \gnot{\hspace{-0,34cm}\eta_M} \grmpb \gcl{1} \gnl
\gvac{1} \gcl{1} \glmptb \gnot{\hspace{-0,34cm}\psi} \grmptb \gnl
\glmpb \gnot{\lambda_M} \gcmptb \grmptb \gcl{1} \gnl
\gcl{1} \gcl{1} \gmu \gnl
\gob{1}{F} \gob{1}{F} \gob{2}{B} \gnl
\gend=
\gbeg{3}{5}
\got{1}{F} \gnl
\gcl{3} \glmpb \gnot{\hspace{-0,34cm}\eta_M} \grmpb \gnl
\gvac{1} \gcl{2} \gcl{2} \gnl
\gob{1}{F} \gob{1}{F} \gob{1}{B} \gnl
\gend
\end{equation}
\\ {\qquad\footnotesize $\lambda_M-\mu_M$ compatibility} & &  {\qquad\qquad \footnotesize $\lambda_M-\eta_M$ compatibility}
\end{tabular}
\end{center}

\begin{center} \hspace{-1,4cm}
\begin{tabular} {p{8cm}p{1,5cm}p{4.7cm}} 
\begin{equation}\eqlabel{6 lambda_M}
\gbeg{6}{8}
\gvac{1} \got{3}{F} \got{1}{F} \gnl
\gvac{2} \gcl{1} \glmpb \gnot{\Delta_M} \gcmptb \grmpb \gnl
\gvac{1} \glmpb \gnot{\lambda_M} \gcmptb \grmptb \gcl{1} \gcl{2} \gnl
\gcn{1}{1}{3}{1} \gvac{1} \gcl{1} \glmptb \gnot{\hspace{-0,34cm}\psi} \grmptb \gcl{1} \gnl
\gcl{1} \glmpb \gnot{\lambda_M} \gcmptb \grmptb \gmu \gnl
\gcl{1} \gcl{1} \gcl{1} \gcl{1} \gcn{1}{1}{2}{1} \gnl
\gcl{1} \gcl{1} \gcl{1} \gmu \gnl
\gob{1}{F} \gob{1}{F} \gob{1}{F} \gob{2}{B} \gnl
\gend=
\gbeg{5}{6}
\gvac{2} \got{1}{F} \got{1}{F} \gnl
\gvac{2} \glmptb \gnot{\lambda_M} \gcmptb \grmpb \gnl
\glmpb \gnot{\Delta_M} \gcmpb \grmptb \gcl{1} \gcl{2} \gnl
\gcl{1} \gcl{1} \glmptb \gnot{\hspace{-0,34cm}\psi} \grmptb \gnl
\gcl{1} \gcl{1} \gcl{1} \gmu \gnl
\gob{1}{F} \gob{1}{F} \gob{1}{F} \gob{2}{B} \gnl
\gend
\end{equation} &  &
\begin{equation}\eqlabel{7 lambda_M}
\gbeg{3}{6}
\got{1}{F} \got{1}{F} \gnl
\glmptb \gnot{\lambda_M} \gcmptb \grmpb \gnl
\gbmp{\Epsilon_M} \gcl{1} \gcl{2} \gnl
\glmptb \gnot{\hspace{-0,34cm}\psi} \grmptb \gnl
\gcl{1} \gmu \gnl
\gob{1}{F} \gob{2}{B} \gnl
\gend=
\gbeg{2}{5}
\got{1}{F} \got{1}{F} \gnl
\gcl{1} \gcl{1} \gnl
\gcl{1} \gbmp{\Epsilon_M}  \gnl
\gcl{1} \gcl{1} \gnl
\gob{1}{F} \gob{1}{B} \gnl
\gend
\end{equation}
\\  {\qquad\qquad\quad \footnotesize $\lambda_M-\Delta_M$ compatibility}  & & {\quad\footnotesize $\lambda_M-\Epsilon_M$ compatibility}
\end{tabular}
\end{center}

\begin{equation}\eqlabel{8 lambda_M}
\gbeg{4}{6}
\gvac{2} \got{1}{F} \got{1}{F} \gnl
\gvac{2} \glmptb \gnot{\hspace{-0,34cm}\mu_M} \grmptb \gnl
\glmpb \gnot{\Delta_M} \gcmpb \grmptb \gcl{1} \gnl
\gcl{2} \gcl{2} \gmu \gnl
\gvac{2} \gcn{1}{1}{2}{2} \gnl
\gob{1}{F} \gob{1}{F} \gob{2}{B} \gnl
\gend=
\gbeg{5}{8}
\gvac{1} \got{1}{F} \got{1}{F} \gnl
\gvac{1} \gcl{1} \glmptb \gnot{\Delta_M} \gcmpb \grmpb \gnl
\glmpb \gnot{\lambda_M} \gcmptb \grmptb \gcl{1} \gcl{2} \gnl
\gcl{1} \gcl{1} \glmptb \gnot{\hspace{-0,34cm}\psi} \grmptb \gnl
\gcl{2} \glmptb \gnot{\hspace{-0,34cm}\mu_M} \grmptb \gmu \gnl
\gcl{1} \gcl{1} \gcl{1} \gcn{1}{1}{2}{1} \gnl
\gcl{1} \gcl{1}  \gmu \gnl
\gob{1}{F} \gob{1}{F} \gob{2}{B} \gnl
\gend
\end{equation}

\vspace{-0,2cm}
$$\textnormal{{\small $\Delta_M-\mu_M$ compatibility}}$$

\noindent and the 8 bimonad compatibilities for the comonad- {\em i.e.} second components:
\begin{center} \hspace{-1,4cm}
\begin{tabular} {p{5.5cm}p{-7cm}p{4cm}p{-1cm}p{3.4cm}} 
\begin{equation}\eqlabel{1 lambda_C}
\gbeg{4}{4}
\got{1}{F} \got{1}{F} \got{2}{B} \gnl
\gcl{1} \gcl{1} \gcmu \gnl
\gcl{1} \glmptb \gnot{\hspace{-0,34cm}\phi} \grmptb \gcl{1} \gnl
\glmpt \gnot{\hspace{-0,34cm}\Epsilon_C} \grmpt \glmpt \gnot{\hspace{-0,34cm}\Epsilon_C} \grmpt \gnl
\gob{2}{} \gnl
\gend=
\gbeg{4}{4}
\got{1}{F} \got{1}{F} \got{2}{B} \gnl
\gcl{1} \gcl{1} \gcmu \gnl
\glmpt \gnot{\mu_C} \gcmpt \grmptb \gcl{1} \gnl
\gvac{2} \glmpt \gnot{\hspace{-0,34cm}\Epsilon_C} \grmpt \gnl
\gob{2}{} \gnl
\gend
\end{equation} & \qquad \qquad &
\begin{equation}\eqlabel{2 lambda_C}
\gbeg{2}{5}
\got{2}{B} \gnl
\gcmu \gnl
\gbmp{\eta_C}  \gcl{1} \gnl
\glmptb \gnot{\hspace{-0,34cm}\Delta_C} \grmptb \gnl
\gob{1}{F} \gob{1}{F} \gnl
\gend=
\gbeg{2}{5}
\got{2}{B} \gnl
\gcmu \gnl
\gbmp{\eta_C} \gbmp{\eta_C} \gnl
\gcl{1} \gcl{1} \gnl
\gob{1}{F} \gob{1}{F} \gnl
\gend
\end{equation} & \qquad\quad\quad &
\begin{equation}\eqlabel{3 lambda_C}
\gbeg{2}{5}
\got{2}{B} \gnl
\gcmu \gnl
\gbmp{\eta_C} \gcl{1} \gnl
\glmpt \gnot{\hspace{-0,34cm}\Epsilon_C} \grmpt \gnl
\gob{2}{} \gnl
\gend=
\gbeg{2}{3}
\got{1}{B} \gnl
\gcl{1} \gnl
\gcu{1} \gnl
\gob{1}{} \gnl
\gend
\end{equation}
\\ {\qquad\footnotesize $\Epsilon_C-\mu_C$ compatibility} & &  {\footnotesize $\Delta_C-\eta_C$ compatibility}  & &  { \footnotesize $\Epsilon_C-\eta_C$ compatibility}
\end{tabular}
\end{center}

\begin{center} \hspace{-1cm}
\begin{tabular} {p{8cm}p{0cm}p{6.2cm}} 
\begin{equation}\eqlabel{4 lambda_C}
\gbeg{7}{7}
\got{1}{F} \got{1}{F} \got{1}{F} \gvac{1} \got{1}{B} \gnl
\gcl{3} \gcl{1} \gcl{1} \gwcm{3} \gnl
\gvac{1} \glmpt \gnot{\lambda_C} \gcmptb \grmptb \gwcm{3} \gnl
\gvac{1} \gvac{1} \gcl{1} \glmptb \gnot{\hspace{-0,34cm}\phi} \grmptb \gcn{1}{1}{3}{1} \gnl
\glmpt \gnot{\hspace{0,24cm}\lambda_C} \gcmpb \gcmpt \grmptb \gcl{1} \gcl{1} \gnl
\gvac{1} \gcl{1} \gvac{1} \glmpt \gnot{\mu_C} \gcmptb \grmpt  \gnl
\gvac{1} \gob{1}{F} \gvac{2} \gob{1}{F} \gnl
\gend=
\gbeg{5}{6}
\got{1}{F} \got{1}{F} \got{1}{F} \got{2}{B} \gnl
\gcl{2} \gcl{2} \gcl{1} \gcmu \gnl
\gvac{2} \glmptb \gnot{\hspace{-0,34cm}\phi} \grmptb \gcl{2} \gnl
\glmpt \gnot{\mu_C} \gcmpt \grmptb \gcl{1} \gnl
\gvac{2} \glmpt \gnot{\lambda_C} \gcmptb \grmptb  \gnl
\gvac{3} \gob{1}{F} \gob{1}{F} \gnl
\gend
\end{equation} &  & 
\begin{equation}\eqlabel{5 lambda_C}
\gbeg{4}{6}
\got{1}{F} \got{2}{B} \gnl
\gcl{1} \gcmu \gnl
\glmptb \gnot{\hspace{-0,34cm}\phi} \grmptb \gcl{2} \gnl
\gbmp{\eta_C} \gcl{1} \gnl
\glmpt \gnot{\lambda_C} \gcmptb \grmptb \gnl
\gvac{1} \gob{1}{F} \gob{1}{F} \gnl
\gend=
\gbeg{2}{5}
\got{1}{F} \got{1}{B} \gnl
\gcl{3} \gcl{1} \gnl
\gvac{1} \gbmp{\eta_C} \gnl
\gvac{1} \gcl{1} \gnl
\gob{1}{F} \gob{1}{F} \gnl
\gend
\end{equation}
\\ {\qquad\qquad\qquad \footnotesize $\lambda_C-\mu_C$ compatibility} & &  {\qquad\qquad \footnotesize $\lambda_C-\eta_C$ compatibility}
\end{tabular}
\end{center}

\begin{center} \hspace{-0,4cm}
\begin{tabular} {p{6cm}p{1cm}p{6cm}} 
\begin{equation}\eqlabel{6 lambda_C}
\gbeg{5}{8}
\got{1}{F} \got{1}{F} \got{2}{B} \gnl
\gcl{3} \gcl{1} \gcmu \gnl
\gvac{1} \glmptb \gnot{\hspace{-0,34cm}\Delta_C} \grmptb \gcn{1}{1}{1}{2} \gnl
\gcl{1} \gcl{1} \gcl{1} \gcmu \gnl
\gcl{1} \gcl{1} \glmptb \gnot{\hspace{-0,34cm}\phi} \grmptb \gcl{2} \gnl
\glmpt \gnot{\lambda_C} \gcmptb \grmptb \gcl{1} \gnl
\gvac{1} \gcl{1} \glmpt \gnot{\lambda_C} \gcmptb \grmptb \gnl
\gvac{1} \gob{1}{F} \gvac{1} \gob{1}{F} \gob{1}{F} \gnl
\gend=
\gbeg{5}{6}
\got{1}{F} \got{1}{F} \got{2}{B} \gnl
\gcl{1} \gcl{1} \gcmu \gnl
\glmpt \gnot{\lambda_C} \gcmptb \grmptb \gcl{1} \gnl
\gvac{1} \gcl{1} \glmptb \gnot{\hspace{-0,34cm}\phi} \grmptb \gnl
\gvac{1} \glmptb \gnot{\hspace{-0,34cm}\Delta_C} \grmptb \gcl{1} \gnl
\gvac{1} \gob{1}{F} \gob{1}{F} \gob{1}{F} \gnl
\gend
\end{equation} &  &  \vspace{0,6cm}
\begin{equation}\eqlabel{7 lambda_C}
\gbeg{5}{6}
\got{1}{F} \got{1}{F} \got{2}{B} \gnl
\gcl{1} \gcl{1} \gcmu \gnl
\glmpt \gnot{\lambda_C} \gcmptb \grmptb \gcl{1} \gnl
\gvac{1} \gcl{1} \glmptb \gnot{\hspace{-0,34cm}\phi} \grmptb \gnl
\gvac{1} \glmpt \gnot{\hspace{-0,34cm}\Epsilon_C} \grmpt \gcl{1} \gnl
\gob{7}{F} \gnl
\gend=
\gbeg{2}{5}
\got{1}{F} \got{1}{F} \got{1}{B}  \gnl
\gcl{3} \gcl{1} \gcl{1} \gnl
\gvac{1} \glmpt \gnot{\hspace{-0,34cm}\Epsilon_C} \grmpt \gnl
\gob{1}{F} \gnl
\gend
\end{equation}
\\  {\qquad\qquad \footnotesize $\lambda_C-\Delta_C$ compatibility}  & & {\quad\qquad\qquad \footnotesize $\lambda_C-\Epsilon_C$ compatibility}
\end{tabular}
\end{center}

\begin{equation}\eqlabel{8 lambda_C}
\gbeg{4}{6}
\got{1}{F} \got{1}{F} \got{2}{B} \gnl
\gcl{1} \gcl{1} \gcmu \gnl
\glmpt \gnot{\mu_C} \gcmpt \grmptb \gcl{1} \gnl
\gvac{2} \glmptb \gnot{\hspace{-0,34cm}\Delta_C} \grmptb \gnl
\gvac{2} \gcl{1} \gcl{1} \gnl
\gvac{2} \gob{1}{F} \gob{1}{F} \gnl
\gend=
\gbeg{5}{8}
\got{1}{F} \got{1}{F} \got{2}{B} \gnl
\gcl{3} \gcl{1} \gcmu \gnl
\gvac{1} \glmptb \gnot{\hspace{-0,34cm}\Delta_C} \grmptb \gcn{1}{1}{1}{2} \gnl
\gcl{1} \gcl{1} \gcl{1} \gcmu \gnl
\gcl{1} \gcl{1} \glmptb \gnot{\hspace{-0,34cm}\phi} \grmptb \gcl{2} \gnl
\glmpt \gnot{\lambda_C} \gcmptb \grmptb \gcl{1} \gnl
\gvac{1} \gcl{1} \glmpt \gnot{\mu_C} \gcmptb \grmpt \gnl
\gvac{1} \gob{1}{F} \gvac{1} \gob{1}{F} \gnl
\gend
\end{equation}

\vspace{-0,2cm}
$$\textnormal{{\small $\Delta_C-\mu_C$ compatibility}}$$
Here is where the definition of a biwreath terminates.

Resuming, a biwreath consists of 1-cells $B,F:\A\to\A$ in $\K$, where $(\A, B)$ is a bimonad in $\K$, 2-cells $\psi: BF\to FB, \phi: FB\to BF$,
$$\mu_M:FF\to FB, \quad \eta_M:\Id_{\A}\to FB, \quad \Delta_B: F\to FFB, \quad \Epsilon_M: F\to B, \quad \lambda_M:FF\to FFB$$
$$\mu_C:FFB\to F, \quad \eta_C:B\to F, \quad \Delta_C: FB\to FF, \quad \Epsilon_C: FB\to\Id_{\A}, \quad \lambda_C:FFB\to FF$$
in $\K$ which satisfy the axioms from \equref{psi laws for B} to \equref{psi-lambda-phi} and from \equref{2-cell mu_M} to \equref{8 lambda_C}.

\medskip


\medskip

\begin{rem} \rmlabel{4 combs of wreaths}
Observe that in a biwreath, by the axioms \equref{psi laws for B}, \equref{2-cell mu_M}, \equref{2-cell eta_M}, \equref{monad law mu_M} and \equref{monad law eta_M},
the 1-cell $F$ is a wreath around $B$, and by the axioms \equref{phi laws for B}, \equref{2-cell Delta_C}, \equref{2-cell Epsilon_C}, \equref{comonad law Delta_C} and \equref{comonad law Epsilon_C},
the 1-cell $F$ is a cowreath around $B$. But also, by \equref{psi laws for B}, \equref{2-cell Delta_M}, \equref{2-cell Epsilon_M}, \equref{comonad law Delta_M} and \equref{comonad law Epsilon_M},
$F$ is a mixed wreath around $B$, and dually, by \equref{phi laws for B}, \equref{2-cell mu_C}, \equref{2-cell eta_C}, \equref{monad law mu_C} and \equref{monad law eta_C},
$F$ is a mixed cowreath around $B$. Mixed wreaths were defined in \cite{Street}, they are comonads in $\EM^M(\K)$
(mixed wreaths were treated also in \cite{BC}, but under a missleading name ``cowreath'', the meaning of which we have commented before).
\end{rem}

\bigskip

We recall from \cite[Section 3]{LS} that $\EM$ is a 2-monad on the 2-category $2\x\Cat$ of 2-categories. Its multiplication $\Comp: \EM(\EM(\K))\to\EM(\K)$ 
specified at a 2-category $\K$ takes a monad in $\EM(\K)$ ({\em i.e.} a wreath $F$ around $B$) to its Eilenberg-Moore object. The latter turns out to be 
the composite 1-cell $FB$ which is called a {\em wreath product}. We recall the structure of a wreath product and dually of the cowreath coproduct:
\begin{equation}  \eqlabel{wreath (co)product}
\nabla_{FB}=
\gbeg{3}{5}
\got{1}{F} \got{1}{B} \got{1}{F} \got{3}{B}  \gnl
\gcl{1}  \glmptb \gnot{\hspace{-0,34cm}\psi} \grmptb \gvac{1} \gcl{1} \gnl
\glmptb \gnot{\hspace{-0,34cm}\mu_M} \grmptb \gwmu{3} \gnl
\gcl{1} \gwmu{3} \gnl
\gob{1}{F} \gvac{1} \gob{1}{B}
\gend\hspace{1,5cm}
\eta_{FB}=
\gbeg{2}{3}
\glmpb \gnot{\hspace{-0,34cm}\eta_M} \grmpb \gnl
\gcl{1} \gcl{1} \gnl
\gob{1}{F} \gob{1}{B}
\gend; \hspace{1,5cm}
\Delta_{FB}=
\gbeg{3}{5}
\got{1}{F} \got{3}{B} \gnl
\gcl{1} \gwcm{3} \gnl
\glmptb \gnot{\hspace{-0,34cm}\Delta_C} \grmptb \gwcm{3} \gnl
\gcl{1}  \glmptb \gnot{\hspace{-0,34cm}\phi} \grmptb \gvac{1} \gcl{1} \gnl
\gob{1}{F} \gob{1}{B} \got{1}{F} \got{3}{B}
\gend\hspace{1,5cm}
\Epsilon_{FB}=
\gbeg{2}{4}
\got{1}{F} \got{1}{B} \gnl
\gcl{1} \gcl{1} \gnl
\glmpt \gnot{\hspace{-0,34cm}\Epsilon_C} \grmpt \gnl
\gend
\end{equation}

\subsection{Structures inside of a biwreath}

Biwreaths posses lots of structures, we are going to investigate them. We start with the one which is easiest to see, it follows by \prref{distr-actions}:

\begin{prop} \prlabel{left B-structures of F}
In a biwreath $F,B:\A\to\A$ in $\K$ the 1-cell $F$ is a left $B$-module and a left $B$-comodule.
\end{prop}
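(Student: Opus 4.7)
The plan is to observe that all the hypotheses needed to apply \prref{distr-actions}(c) are already present in the definition of a biwreath, so the proposition reduces to a direct invocation of that earlier result.

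First I would check the monad side. Since $(\A, B)$ is a 0-cell of $\bEM(\K)$, the 1-cell $B$ is a bimonad in the sense of \deref{bimonad}, so in particular $(\A, B, \mu, \eta)$ is a monad and the counit $\Epsilon: B \to \Id_{\A}$ satisfies the bimonad compatibilities \equref{eta-Delta B} and \equref{epsilon-eta B}; these are precisely the hypotheses $\Epsilon \circ \mu = \Epsilon \otimes \Epsilon$ and $\Epsilon \circ \eta = \Id$ required in part (a) of \prref{distr-actions}. Moreover, $\psi: BF \to FB$ is part of the data of the biwreath and satisfies \equref{psi laws for B}, which is exactly the distributive-law condition needed. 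Hence by \prref{distr-actions}(a), the 2-cell defined in \equref{F left B-mod} endows $F$ with the structure of a left $B$-module.

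Dually, for the comodule side, the bimonad structure on $B$ provides a comonad $(\A, B, \Delta, \Epsilon)$ whose unit $\eta: \Id_{\A} \to B$ fulfils \equref{epsilon-mu B} and \equref{epsilon-eta B}, which translate into the two hypotheses on $\eta$ needed in part (b) of \prref{distr-actions}. The 2-cell $\phi: FB \to BF$ is part of the biwreath data and satisfies \equref{phi laws for B}, which is exactly the required distributive-law condition from $F$ to the comonad $B$. Applying \prref{distr-actions}(b), the 2-cell \equref{F left B-comod} makes $F$ into a left $B$-comodule.

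Combining the two, which is essentially the content of \prref{distr-actions}(c), finishes the proof. The ``main obstacle'' is really just bookkeeping: one needs to notice that the bimonad axioms for $B$ from \deref{bimonad} supply exactly the counit/unit compatibilities that \prref{distr-actions} asks for, and that $\psi, \phi$ automatically satisfy \equref{psi laws} and \equref{phi laws} because these are \equref{psi laws for B} and \equref{phi laws for B} verbatim. No new computation is required.
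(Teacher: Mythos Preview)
Your proposal is correct and matches the paper's own proof exactly: the paper simply states that the proposition ``follows by \prref{distr-actions}'' without further elaboration, and you have spelled out precisely why that invocation is justified. Your bookkeeping of which bimonad axioms supply which hypotheses is accurate and more detailed than the paper itself provides.
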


Accordingly, we fix the following notation for the rest of the paper:
\begin{center} 
\begin{tabular}{p{5cm}p{2cm}p{5cm}}
\begin{equation} \eqlabel{left B-mod}
\gbeg{2}{3}
\got{1}{B} \got{1}{F} \gnl
\glm \gnl
\gob{3}{F}
\gend=
\gbeg{2}{4}
\got{1}{B} \got{1}{F} \gnl
\glmptb \gnot{\hspace{-0,34cm}\psi} \grmptb \gnl
\gcl{1} \gcu{1} \gnl
\gob{1}{F}
\gend
\end{equation} & &
\begin{equation} \eqlabel{left B-comod}
\gbeg{2}{3}
\got{3}{F} \gnl
\glcm \gnl
\gob{1}{B} \gob{1}{F}
\gend=
\gbeg{2}{4}
\got{1}{F} \gnl
\gcl{1} \gu{1} \gnl
\glmptb \gnot{\hspace{-0,34cm}\phi} \grmptb \gnl
\gob{1}{B} \gob{1}{F}
\gend
\end{equation}
\end{tabular}
\end{center}

\subsubsection{Applying unit and counit of $B$} \sslabel{(co)units B}

The next series of information on biwreaths we obtain by applying at appropriate places $\eta_B$ and $\Epsilon_B$ to the axioms of a biwreath.
We first introduce the following notation:
$$
\gbeg{2}{3}
\got{1}{F} \got{1}{F} \gnl
\gmu \gnl
\gob{2}{F} \gnl
\gend:=
\gbeg{2}{5}
\got{1}{F} \got{1}{F} \gnl
\gcl{1} \gcl{1} \gnl
\glmptb \gnot{\hspace{-0,34cm}\mu_M} \grmptb \gnl
\gcl{1} \gcu{1} \gnl
\gob{1}{F} \gnl
\gend \hspace{2cm}
\gbeg{1}{4}
\got{1}{} \gnl
\gu{1} \gnl
\gcl{1} \gnl
\gob{1}{F} \gnl
\gend:=
\gbeg{1}{4}
\got{1}{} \gnl
\glmpb \gnot{\hspace{-0,34cm}\eta_M} \grmpb \gnl
\gcl{1} \gcu{1} \gnl
\gob{1}{F} \gnl
\gend \hspace{2cm}
\gbeg{2}{3}
\got{2}{F} \gnl
\gcmu \gnl
\gob{1}{F} \gob{1}{F} \gnl
\gend:=
\gbeg{3}{5}
\got{3}{F} \gnl
\gvac{1} \gcl{1} \gnl
\glmpb \gnot{\Delta_M} \gcmptb \grmpb \gnl
\gcl{1} \gcl{1} \gcu{1} \gnl
\gob{1}{F} \gob{1}{F} \gnl
\gend \hspace{2cm}
\gbeg{1}{3}
\got{1}{F} \gnl
\gcl{1} \gnl
\gcu{1} \gnl
\gob{1}{} \gnl
\gend:=
\gbeg{1}{3}
\got{1}{F} \gnl
\gcl{1} \gnl
\gbmp{\Epsilon_{M}} \gnl
\gcu{1} \gnl
\gob{1}{} \gnl
\gend
$$

$$
\gbeg{2}{4}
\got{1}{F} \got{1}{F} \gnl
\gmuf{1} \gnl
\gcn{1}{1}{2}{2} \gnl
\gob{2}{F} \gnl
\gend:=
\gbeg{3}{5}
\got{1}{F} \got{1}{F} \gnl
\gcl{1} \gcl{1} \gu{1} \gnl
\glmpt \gnot{\mu_C} \gcmptb \grmpt \gnl
\gvac{1} \gcl{1} \gnl
\gob{3}{F} \gnl
\gend \hspace{2cm}
\gbeg{1}{4}
\got{1}{} \gnl
\guf{1} \gnl
\gcl{1} \gnl
\gob{1}{F} \gnl
\gend:=
\gbeg{1}{5}
\got{1}{} \gnl
\gu{1} \gnl
\gbmp{\eta_C} \gnl
\gcl{1} \gnl
\gob{1}{F} \gnl
\gend \hspace{2cm}
\gbeg{2}{4}
\got{2}{F} \gnl
\gcn{1}{1}{2}{2} \gnl
\gcmuf{1} \gnl
\gob{1}{F} \gob{1}{F} \gnl
\gend:=
\gbeg{2}{5}
\got{1}{F} \gnl
\gcl{1} \gu{1} \gnl
\glmptb \gnot{\hspace{-0,34cm}\Delta_C} \grmptb \gnl
\gcl{1} \gcl{1} \gnl
\gob{1}{F} \gob{1}{F} \gnl
\gend \hspace{2cm}
\gbeg{1}{3}
\got{1}{F} \gnl
\gcl{1} \gnl
\gcuf{1} \gnl
\gob{1}{} \gnl
\gend:=
\gbeg{1}{3}
\got{1}{F} \gnl
\gcl{1} \gu{1} \gnl
\glmpt \gnot{\hspace{-0,34cm}\Epsilon_C} \grmpt \gnl
\gob{1}{} \gnl
\gend
$$
Here we are abusing of notation, as we use the same or similar symbols as for (co)monad structures, that is the (co)unit and (co)multiplication 2-cells, as we mentioned in \equref{strings},
for the new structures on $F$ which we are only going to investigate. We keep this in mind. The upper 2-cells we will call monadic and comonadic pre-(co)multiplications and pre-(co)units.
Let us see which structures we find in a biwreath.

\bigskip


Composing \equref{psi-lambda-phi} with $BF\eta_B$ from above and with $BF\Epsilon_B$ from below, we get:
\begin{equation} \eqlabel{YD condition}
\gbeg{3}{5}
\got{1}{B} \got{1}{F} \gnl
\glmptb \gnot{\hspace{-0,34cm}\psi} \grmptb \gu{1} \gnl
\gcl{1} \glmptb \gnot{\hspace{-0,34cm}\lambda} \grmptb \gnl
\glmptb \gnot{\hspace{-0,34cm}\phi} \grmptb \gcu{1} \gnl
\gob{1}{B} \gob{1}{F}
\gend=
\gbeg{3}{5}
\got{1}{B} \got{3}{F}  \gnl
\gcl{1} \glcm \gnl
\glmptb \gnot{\hspace{-0,34cm}\lambda} \grmptb \gcl{1} \gnl
\gcl{1} \glm \gnl
\gob{1}{B} \gob{3}{F}
\gend
\end{equation}

\vspace{-0,2cm}
$$\textnormal{{\small the Yetter-Drinfel`d condition}}$$
We call this the {\em Yetter-Drinfel`d compatibility condition}. 
The reason for this will be clear later on.

Next, to all the monadic components of the axioms from \equref{2-cell mu_M} to \equref{comonad law Epsilon_C}, {\em i.e.} those axioms which determine the 2-cell condition or (co)monad laws
in $\EM^M(\K)$, we apply $\Epsilon_B$ and then use \equref{epsilon-mu B}. 
Analogously,  to all the comonadic components of the axioms from \equref{2-cell mu_M} to \equref{comonad law Epsilon_C}, {\em i.e.} those axioms which determine the 2-cell condition or (co)monad laws
in $\EM^C(\K)$, we apply $\eta_B$ and then use \equref{eta-Delta B}. We get the following:
\begin{center} \hspace{-1,4cm}
\begin{tabular} {p{6.4cm}p{2cm}p{4cm}} 
\begin{equation} \eqlabel{mod alg}
\gbeg{3}{5}
\got{1}{B} \got{1}{F} \got{1}{F} \gnl
\glmptb \gnot{\hspace{-0,34cm}\psi} \grmptb \gcl{1} \gnl
\gcl{1} \glm \gnl
\gwmu{3} \gnl
\gob{3}{F}
\gend=
\gbeg{3}{5}
\got{1}{B} \got{1}{F} \got{3}{F}\gnl
\gcl{1} \gwmu{3} \gnl
\gcn{1}{1}{1}{3} \gvac{1} \gcl{2} \gnl
\gvac{1} \glm \gnl
\gob{5}{F}
\gend
\end{equation} & &  \vspace{-0,9cm}
\begin{equation} \eqlabel{mod alg unity}
\gbeg{3}{5}
\got{1}{B} \gnl
\gcl{1} \gu{1} \gnl
\glm \gnl
\gvac{1} \gcl{1} \gnl
\gob{3}{F}
\gend=
\gbeg{2}{6}
\got{1}{B} \gnl
\gcl{1} \gnl
\gcu{1} \gnl
\gu{1} \gnl
\gcl{1} \gnl
\gob{1}{F}
\gend
\end{equation}
\end{tabular}
\end{center} \vspace{-0,2cm}
$$ \textnormal{ \footnotesize module monad}  \hspace{5,5cm}  \textnormal{\footnotesize module monad unity} $$ \vspace{-0,7cm}
$$ \textnormal{\footnotesize from 2-cell cond. of $\mu_M$ \equref{2-cell mu_M}} \hspace{4cm} \textnormal{\footnotesize from 2-cell cond. of $\eta_M$ \equref{2-cell eta_M}} $$

\begin{center} \hspace{-1,4cm}
\begin{tabular} {p{6cm}p{1cm}p{6cm}} 
\begin{equation} \eqlabel{mod coalg}
\gbeg{3}{5}
\got{1}{B} \got{2}{F} \gnl
\gcl{1} \gcmu \gnl
\glmptb \gnot{\hspace{-0,34cm}\psi} \grmptb \gcl{1} \gnl
\gcl{1} \glm \gnl
\gob{1}{F} \gob{3}{F}
\gend=
\gbeg{3}{4}
\got{1}{B} \got{1}{F} \gnl
\glm \gnl
\gvac{1} \hspace{-0,22cm} \gcmu \gnl
\gvac{1} \gob{1}{F} \gob{1}{F}
\gend
\end{equation} & & \vspace{0,1cm}
\begin{equation} \eqlabel{mod coalg counity}
\gbeg{2}{3}
\got{1}{B} \got{1}{F} \gnl
\glm \gnl
\gvac{1} \gcu{1} \gnl
\gob{2}{}
\gend=
\gbeg{2}{3}
\got{1}{B} \got{1}{F} \gnl
\gcl{1} \gcl{1} \gnl
\gcu{1} \gcu{1} \gnl
\gob{2}{}
\gend
\end{equation}
\end{tabular}
\end{center}
\vspace{-0,2cm}
$$ \textnormal{ \footnotesize module comonad}  \hspace{5,5cm}  \textnormal{\footnotesize module comonad counity} $$ \vspace{-0,7cm}
$$ \textnormal{\footnotesize from 2-cell cond. of $\Delta_M$ \equref{2-cell Delta_M}} \hspace{4cm} \textnormal{\footnotesize from 2-cell cond. of $\Epsilon_M$ \equref{2-cell Epsilon_M}} $$


\begin{center} \hspace{-1,4cm}
\begin{tabular} {p{6cm}p{1cm}p{6cm}} 
\begin{equation}\eqlabel{comod alg}
\gbeg{3}{5}
\got{1}{F} \got{3}{F} \gnl
\gcl{1} \glcm \gnl
\glmptb \gnot{\hspace{-0,34cm}\phi} \grmptb \gcl{1} \gnl
\gcl{1} \gmuf \gnl \gnl
\gob{1}{B} \gob{2}{F}
\gend=
\gbeg{2}{4}
\got{2}{F} \got{1}{\hspace{-0,34cm}F} \gnl
\gvac{1} \hspace{-0,34cm} \gmuf \gnl \gnl
\gvac{1} \hspace{-0,22cm} \glcm \gnl
\gvac{1} \gob{1}{B} \gob{1}{F}
\gend
\end{equation} & & 
\begin{equation}\eqlabel{comod alg unity}
\gbeg{2}{4}
\got{1}{} \gnl
\gvac{1} \guf{1} \gnl
\glcm \gnl
\gob{1}{B} \gob{1}{F}
\gend=
\gbeg{2}{4}
\got{1}{} \gnl
\gu{1} \guf{1} \gnl
\gcl{1} \gcl{1} \gnl
\gob{1}{B} \gob{1}{F}
\gend
\end{equation}
\end{tabular}
\end{center}
\vspace{-0,2cm}
$$ \textnormal{ \footnotesize comodule monad}  \hspace{5,5cm}  \textnormal{\footnotesize comodule monad unity} $$ \vspace{-0,7cm}
$$ \textnormal{\footnotesize from 2-cell cond. of $\mu_C$ \equref{2-cell mu_C}} \hspace{4cm} \textnormal{\footnotesize from 2-cell cond. of $\eta_C$ \equref{2-cell eta_C}} $$

\begin{center} \hspace{-1,4cm}
\begin{tabular} {p{6cm}p{0cm}p{6cm}} 
\begin{equation}\eqlabel{comod coalg}
\gbeg{3}{6}
\got{3}{F} \gnl
\gvac{1} \gcl{1} \gnl
\gwcmf{3} \gnl
\gcl{1} \glcm \gnl
\glmptb \gnot{\hspace{-0,34cm}\phi} \grmptb \gcl{1} \gnl
\gob{1}{B} \gob{1}{F} \gob{1}{F}
\gend=
\gbeg{3}{6}
\got{5}{F}\gnl
\gvac{2} \gcl{1} \gnl
\gvac{1} \glcm \gnl
\gcn{1}{1}{3}{1} \gvac{1} \gcl{1} \gnl
\gcl{1} \gwcmf{3} \gnl
\gob{1}{B} \gob{1}{F} \gob{3}{F}
\gend
\end{equation} & & \vspace{0,1cm}
\begin{equation}\eqlabel{comod coalg counity}
\gbeg{2}{4}
\got{3}{F} \gnl
\glcm \gnl
\gcl{1} \gcuf{1} \gnl
\gob{1}{B}
\gend=
\gbeg{1}{4}
\got{1}{F} \gnl
\gcuf{1} \gnl
\gu{1} \gnl
\gob{1}{B}
\gend
\end{equation}
\end{tabular}
\end{center}
\vspace{-0,2cm}
$$ \textnormal{ \footnotesize comodule comonad}  \hspace{5,5cm}  \textnormal{\footnotesize comodule comonad counity} $$ \vspace{-0,7cm}
$$ \textnormal{\footnotesize from 2-cell cond. of $\Delta_C$ \equref{2-cell Delta_C}} \hspace{4cm} \textnormal{\footnotesize from 2-cell cond. of $\Epsilon_C$ \equref{2-cell Epsilon_C}} $$



\begin{center} \hspace{-1,4cm}
\begin{tabular} {p{7.2cm}p{1cm}p{6.8cm}}
\begin{equation}\eqlabel{weak assoc. mu_M}
\gbeg{4}{4}
\got{1}{F} \got{1}{F} \got{3}{F} \gnl
\gcl{1} \gwmu{3} \gnl
\gwmu{3} \gnl
\gob{3}{F}
\gend=
\gbeg{3}{5}
\got{1}{F} \got{1}{F} \got{1}{F} \gnl
\glmptb \gnot{\hspace{-0,34cm}\mu_M} \grmptb \gcl{1} \gnl
\gcl{1} \glm \gnl
\gwmu{3} \gnl
\gob{3}{F}
\gend
\end{equation} & &
\begin{equation}\eqlabel{weak unity eta_M}
\gbeg{3}{5}
\got{1}{} \got{3}{F}\gnl
\glmpb \gnot{\hspace{-0,34cm}\eta_M} \grmpb \gcl{1} \gnl
\gcl{1} \glm \gnl
\gwmu{3} \gnl
\gob{3}{F}
\gend=
\gbeg{1}{4}
\got{1}{F}\gnl
\gcl{2} \gnl
\gob{1}{F}
\gend=
\gbeg{2}{4}
\got{1}{F}\gnl
\gcl{1} \gu{1} \gnl
\gmu \gnl
\gob{2}{F}
\gend
\end{equation}
\end{tabular}
\end{center}
\vspace{-0,5cm}
$$ \textnormal{ \footnotesize weak associativity of $\mu_M$}  \hspace{5,5cm}  \textnormal{\footnotesize weak unity $\eta_M$} $$ \vspace{-0,7cm}
$$ \textnormal{\footnotesize from monad law for $\mu_M$ \equref{monad law mu_M}} \hspace{4cm} \textnormal{\footnotesize from monad law for $\eta_M$ \equref{monad law eta_M}} $$

\begin{center} \hspace{-1,4cm}
\begin{tabular}{p{7.2cm}p{1cm}p{6.8cm}}
\begin{equation} \eqlabel{quasi assoc. mu_C}
\gbeg{4}{4}
\got{1}{F} \got{1}{F} \got{3}{F} \gnl
\gcl{1} \gwmuf{3} \gnl
\gwmuf{3} \gnl
\gob{3}{F}
\gend=
\gbeg{4}{5}
\got{1}{F} \got{1}{F} \got{3}{F} \gnl
\gcl{1} \gcl{1} \glcm \gnl
\glmpt \gnot{\mu_C} \gcmptb \grmpt \gcl{1} \gnl
\gvac{1} \gwmuf{3} \gnl
\gob{5}{F}
\gend
\end{equation} &  &
\begin{equation}\eqlabel{quasi unity eta_C}
\gbeg{2}{5}
\got{3}{F} \gnl
\glcm \gnl
\gbmp{\s\eta_{C}} \gcl{1} \gnl
\gmuf{1} \gnl
\gob{2}{F}
\gend=
\gbeg{1}{4}
\got{1}{F} \gnl
\gcl{2} \gnl
\gob{1}{F}
\gend=
\gbeg{2}{4}
\got{1}{F} \gnl
\gcl{1} \guf{1} \gnl
\gmuf{1}  \gnl
\gob{2}{F}
\gend
\end{equation}
\end{tabular}
\end{center}
\vspace{-0,5cm}
$$ \textnormal{ \footnotesize quasi associativity of $\mu_C$}  \hspace{5,5cm}  \textnormal{\footnotesize quasi unity $\eta_C$} $$ \vspace{-0,7cm}
$$ \textnormal{\footnotesize from monad law for $\mu_C$ \equref{monad law mu_C}} \hspace{4cm} \textnormal{\footnotesize from monad law for $\eta_C$ \equref{monad law eta_C}} $$

\begin{center} \hspace{-1,4cm}
\begin{tabular}{p{7.2cm}p{1cm}p{6.8cm}}
\begin{equation} \eqlabel{quasi coass. Delta_M}
\gbeg{4}{4}
\got{3}{F} \gnl
\gwcm{3} \gnl
\gcl{1} \gwcm{3} \gnl
\gob{1}{F} \gob{1}{F} \gob{3}{F} \gnl
\gend=
\gbeg{5}{5}
\got{5}{F} \gnl
\gvac{1} \gwcm{3} \gnl
\glmpb \gnot{\Delta_M} \gcmptb \grmpb \gcl{1} \gnl
\gcl{1} \gcl{1} \glm \gnl
\gob{1}{F} \gob{1}{F} \gob{3}{F} \gnl
\gend
\end{equation} &  &
\begin{equation}\eqlabel{quasi counity Epsilon_M}
\gbeg{2}{5}
\got{2}{F} \gnl
\gcmu \gnl
\gbmp{\Epsilon_M} \gcl{1} \gnl
\glm \gnl
\gob{3}{F} \gnl
\gend=
\gbeg{1}{4}
\got{1}{F} \gnl
\gcl{2} \gnl
\gob{1}{F} \gnl
\gend=
\gbeg{2}{4}
\got{2}{F} \gnl
\gcmu  \gnl
\gcl{1} \gcu{1} \gnl
\gob{1}{F} \gnl
\gend
\end{equation}
\end{tabular}
\end{center}
\vspace{-0,5cm}
$$ \textnormal{ \footnotesize quasi coassociativity of $\Delta_M$}  \hspace{5,5cm}  \textnormal{\footnotesize quasi counity $\Epsilon_M$} $$ \vspace{-0,7cm}
$$ \textnormal{\footnotesize from comonad law for $\Delta_M$ \equref{comonad law Delta_M}} \hspace{4cm} \textnormal{\footnotesize from comonad law for $\Epsilon_M$ \equref{comonad law Epsilon_C}} $$

\begin{center} \hspace{-1,4cm}
\begin{tabular}{p{7.2cm}p{2cm}p{6.8cm}}
\begin{equation} \eqlabel{weak coass. Delta_C}
\gbeg{4}{5}
\got{3}{F} \gnl
\gvac{1} \gcl{1} \gnl
\gwcmf{3} \gnl
\gcl{1} \gwcmf{3} \gnl
\gob{1}{F} \gob{1}{F} \gob{3}{F} \gnl
\gend=
\gbeg{3}{6}
\got{3}{F} \gnl
\gvac{1} \gcl{1} \gnl
\gwcmf{3} \gnl
\gcl{1} \glcm \gnl
\glmptb \gnot{\hspace{-0,34cm}\Delta_C} \grmptb \gcl{1} \gnl
\gob{1}{F}\gob{1}{F}  \gob{1}{F}
\gend
\end{equation} &  &
\begin{equation}\eqlabel{weak counit Epsilon_C}
\gbeg{3}{6}
\got{3}{F} \gnl
\gcn{1}{1}{3}{3} \gnl
\gwcmf{3} \gnl
\gcl{1} \glcm \gnl
\glmpt \gnot{\hspace{-0,34cm}\Epsilon_C} \grmpt \gcl{1} \gnl
\gob{5}{F}
\gend=
\gbeg{1}{4}
\got{1}{F} \gnl
\gcl{2} \gnl
\gob{1}{F}
\gend=
\gbeg{2}{5}
\got{2}{F} \gnl
\gcn{1}{1}{2}{2} \gnl
\gcmuf{1} \gnl
\gcl{1} \gcuf{1} \gnl
\gob{1}{F} \gnl
\gend
\end{equation}
\end{tabular}
\end{center}
\vspace{-0,5cm}
$$ \textnormal{ \footnotesize weak coassociativity of $\Delta_C$}  \hspace{5,5cm}  \textnormal{\footnotesize weak counity $\Epsilon_C$} $$ \vspace{-0,7cm}
$$ \textnormal{\footnotesize from comonad law for $\Delta_C$ \equref{comonad law Delta_C}} \hspace{4cm} \textnormal{\footnotesize from comonad law for $\Epsilon_C$ \equref{comonad law Epsilon_C}} $$


\noindent from the 8 monadic bimonad rules \equref{1 lambda_M} - \equref{8 lambda_M} for $F$, with the same algorythm, we get: 
\begin{center} \hspace{-1,4cm} 
\begin{tabular} {p{4.5cm}p{-7cm}p{5.8cm}p{-1cm}p{3.4cm}} 
\begin{equation}\eqlabel{1 lambda_M implies}
\gbeg{2}{3}
\got{1}{F} \got{1}{F} \gnl
\gcl{1} \gcl{1} \gnl
\gcu{1} \gcu{1} \gnl
\gend=
\gbeg{2}{3}
\got{1}{F} \got{1}{F} \gnl
\gmu \gnl
\gvac{1} \hspace{-0,22cm} \gcu{1} \gnl
\gend
\end{equation} & \qquad \qquad &
\begin{equation}\eqlabel{2 lambda_M implies}
\gbeg{1}{3}
 \gu{1} \gnl
\hspace{-0,34cm} \gcmu \gnl
\gob{1}{F} \gob{1}{F} \gnl
\gend=
\gbeg{3}{3}
\glmpb \gnot{\hspace{-0,34cm}\eta_M} \grmpb \gu{1} \gnl
\gcl{1} \glm \gnl
\gob{1}{F} \gob{3}{F} \gnl
\gend\stackrel{\equref{mod alg unity} }{=}
\gbeg{2}{3}
\gu{1} \gu{1} \gnl
\gcl{1} \gcl{1} \gnl
\gob{1}{F} \gob{1}{F}
\gend
\end{equation} & \qquad\quad\quad &
\begin{equation}\eqlabel{3 lambda_M implies}
\gbeg{1}{2}
\gu{1} \gnl
\gcu{1} \gnl
\gend=
\Id_{id_{\A}}
\end{equation} 
\end{tabular}
\end{center}

\begin{center} 
\begin{tabular} {p{6cm}p{1,5cm}p{5cm}} 
\begin{equation}\eqlabel{4 lambda_M implies}
\gbeg{5}{6}
\gvac{1} \got{1}{F} \got{1}{F} \got{1}{F} \gnl
\gvac{1} \gcl{1} \glmptb \gnot{\lambda_M} \gcmptb \grmpb \gnl
\glmpb \gnot{\lambda_M} \gcmptb \grmptb \gcl{1} \gcu{1} \gnl
\gcl{1} \gcl{1} \glm \gnl
\gcl{1} \gwmu{3} \gnl
\gob{1}{F} \gob{3}{F}  \gnl
\gend=
\gbeg{5}{6}
\got{1}{F} \got{1}{F} \got{1}{F} \gnl
\glmptb \gnot{\hspace{-0,34cm}\mu_M} \grmptb \gcl{1} \gnl
\gcl{1} \glm \gnl
\glmptb \gnot{\lambda_M} \gcmpb \grmptb \gnl
\gcl{1} \gcl{1} \gcu{1} \gnl
\gob{1}{F} \gob{1}{F} \gnl
\gend
\end{equation} &  &  
\begin{equation}\eqlabel{5 lambda_M implies}
\gbeg{4}{6}
\got{5}{F} \gnl
\glmpb \gnot{\hspace{-0,34cm}\eta_M} \grmpb \gcl{1} \gnl
\gcl{1} \glm \gnl
\glmptb \gnot{\lambda_M} \gcmpb \grmptb \gnl
\gcl{1} \gcl{1} \gcu{1} \gnl
\gob{1}{F} \gob{1}{F} \gnl
\gend=
\gbeg{3}{5}
\got{1}{F} \gnl
\gcl{3} \gu{1} \gnl
\gvac{1} \gcl{2} \gnl
\gob{1}{F} \gob{1}{F} \gnl
\gend
\end{equation} 
\end{tabular}
\end{center}

\begin{center} \hspace{-1,5cm} 
\begin{tabular} {p{8cm}p{1cm}p{4.7cm}} 
\begin{equation}\eqlabel{6 lambda_M implies}
\gbeg{4}{7}
\gvac{1} \got{1}{F} \got{2}{F} \gnl
\gvac{1} \gcl{1} \gcmu \gnl
\glmpb \gnot{\lambda_M} \gcmptb \grmptb \gcl{1} \gnl
\gcl{1} \gcl{1} \glm \gnl
\gcl{1} \glmptb \gnot{\lambda_M} \gcmpb \grmptb \gnl
\gcl{1} \gcl{1} \gcl{1} \gcu{1}  \gnl
\gob{1}{F} \gob{1}{F} \gob{1}{F} \gnl
\gend=
\gbeg{5}{6}
\gvac{2} \got{1}{F} \got{1}{F} \gnl
\gvac{2} \gcl{1} \gcl{1} \gnl
\gvac{2} \glmptb \gnot{\lambda_M} \gcmptb \grmpb \gnl
\glmpb \gnot{\Delta_M} \gcmpb \grmptb \gcl{1} \gcu{1} \gnl
\gcl{1} \gcl{1} \glm \gnl
\gob{1}{F} \gob{1}{F} \gob{3}{F} \gnl
\gend
\end{equation} &  &
\begin{equation}\eqlabel{7 lambda_M implies}
\gbeg{3}{5}
\got{1}{F} \got{1}{F} \gnl
\glmptb \gnot{\lambda_M} \gcmptb \grmpb \gnl
\gbmp{\Epsilon_M} \gcl{1} \gcu{1} \gnl
\glm \gnl
\gob{3}{F} \gnl
\gend=
\gbeg{2}{5}
\got{1}{F} \got{1}{F} \gnl
\gcl{1} \gcl{1} \gnl
\gcl{1} \gcl{1} \gnl
\gcl{1} \gcu{1} \gnl
\gob{1}{F} \gnl
\gend
\end{equation} 
\end{tabular}
\end{center}

\begin{equation}\eqlabel{8 lambda_M implies}
\gbeg{3}{5}
\got{1}{F} \got{3}{F} \gnl
\gwmu{3} \gnl
\gvac{1} \gcl{1} \gnl
\gwcm{3} \gnl
\gob{1}{F} \gob{3}{F}
\gend=
\gbeg{5}{6}
\gvac{1} \got{1}{F} \got{2}{F} \gnl
\gvac{1} \gcl{1} \gcmu \gnl
\glmpb \gnot{\lambda_M} \gcmptb \grmptb \gcl{1} \gnl
\gcl{1} \gcl{1} \glm \gnl
\gcl{1} \gwmu{3} \gnl
\gob{1}{F} \gob{3}{F} \gnl
\gend 
\end{equation} 
and from the 8 comonadic bimonad rules \equref{1 lambda_C} - \equref{8 lambda_C} for $F$, with the same algorythm, we get: 
\begin{center} \hspace{-1,4cm} 
\begin{tabular} {p{6cm}p{-7cm}p{4cm}p{-1cm}p{3.4cm}} 
\begin{equation}\eqlabel{1 lambda_C implies}
\gbeg{3}{4}
\got{1}{F} \got{3}{F} \gnl
\gcl{1} \glcm \gnl
\glmpt \gnot{\hspace{-0,34cm}\Epsilon_C} \grmpt \gcuf{1} \gnl
\gend\stackrel{\equref{comod coalg counity}}{=}
\gbeg{2}{3}
\got{1}{F} \got{1}{F} \gnl
\gcl{1} \gcl{1} \gnl
\gcuf{1} \gcuf{1} \gnl
\gend=
\gbeg{2}{3}
\got{1}{F} \got{1}{F} \gnl
\gmuf{1} \gnl
\gvac{1} \hspace{-0,34cm} \gcuf{1} \gnl
\gend
\end{equation} & \qquad \qquad &
\begin{equation}\eqlabel{2 lambda_C implies}
\gbeg{2}{4}
\guf{1} \gnl
\gcl{1} \gnl
\hspace{-0,34cm} \gcmuf{1} \gnl
\gob{1}{F} \gob{1}{F} \gnl
\gend=
\gbeg{2}{4}
\guf{1} \guf{1} \gnl
\gcl{2} \gcl{2} \gnl
\gob{1}{F} \gob{1}{F} \gnl
\gend
\end{equation} & \qquad\quad\quad &  \vspace{0,1cm}
\begin{equation}\eqlabel{3 lambda_C implies}
\gbeg{1}{2}
\guf{1} \gnl
\gcuf{1} \gnl
\gend=\Id_{id_{\A}}
\end{equation} 
\end{tabular}
\end{center}

\begin{center} \hspace{-1,4cm} 
\begin{tabular} {p{7cm}p{1cm}p{6.2cm}} 
\begin{equation}\eqlabel{4 lambda_C implies}
\gbeg{6}{7}
\got{1}{F} \gvac{1} \got{1}{F} \got{1}{F} \gnl
\gcl{3} \gvac{1} \gcl{1} \gcl{1} \gu{1} \gnl
\gvac{2} \glmptb \gnot{\lambda_C} \gcmpt \grmptb \gnl
\gvac{1} \gvac{1} \gcl{1} \glcm \gnl
\glmpt \gnot{\hspace{0,24cm}\lambda_C} \gcmpb \gcmpt \grmptb \gcl{1} \gu{1} \gnl
\gvac{1} \gcl{1} \gvac{1} \glmpt \gnot{\mu_C} \gcmptb \grmpt  \gnl
\gvac{1} \gob{1}{F} \gvac{2} \gob{1}{F} \gnl
\gend=
\gbeg{6}{6}
\got{1}{F} \got{1}{F} \got{3}{F} \gnl
\gcl{1} \gcl{1} \glcm \gnl
\glmpt \gnot{\mu_C} \gcmpt \grmptb \gcl{1} \gu{1} \gnl
\gvac{2} \glmpt \gnot{\lambda_C} \gcmptb \grmptb  \gnl
\gvac{3} \gcl{1} \gcl{1} \gnl
\gvac{3} \gob{1}{F} \gob{1}{F} \gnl
\gend
\end{equation} &  &  \vspace{0,4cm}
\begin{equation}\eqlabel{5 lambda_C implies}
\gbeg{4}{5}
\got{3}{F} \gnl
\glcm \gnl
\gbmp{\eta_C} \gcl{1} \gu{1} \gnl
\glmpt \gnot{\lambda_C} \gcmptb \grmptb \gnl
\gvac{1} \gob{1}{F} \gob{1}{F} \gnl
\gend=
\gbeg{2}{4}
\got{1}{F} \gnl
\gcl{2} \guf{1} \gnl
\gvac{1} \gcl{1} \gnl
\gob{1}{F} \gob{1}{F} \gnl
\gend
\end{equation} 
\end{tabular}
\end{center}

\begin{center} 
\begin{tabular} {p{6cm}p{1cm}p{6cm}} 
\begin{equation}\eqlabel{6 lambda_C implies}
\gbeg{5}{7}
\got{1}{F} \got{3}{F} \gnl
\gcl{1} \gvac{1} \gcl{1} \gnl
\gcl{2} \gwcmf{3} \gnl
\gvac{1} \gcl{1} \glcm \gnl
\glmpt \gnot{\lambda_C} \gcmptb \grmptb \gcl{1} \gu{1} \gnl
\gvac{1} \gcl{1} \glmp \gnot{\lambda_C} \gcmptb \grmptb \gnl
\gvac{1} \gob{1}{F} \gvac{1} \gob{1}{F} \gob{1}{F} \gnl
\gend=
\gbeg{5}{6}
\got{1}{F} \got{1}{F} \gnl
\gcl{1} \gcl{1} \gu{1} \gnl
\glmptb \gnot{\lambda_C} \gcmpt \grmptb \gnl
\gcl{1} \glcm \gnl
\glmptb \gnot{\hspace{-0,34cm}\Delta_C} \grmptb \gcl{1} \gnl
\gob{1}{F} \gob{1}{F} \gob{1}{F} \gnl
\gend
\end{equation} & &  \vspace{0,2cm}
\begin{equation}\eqlabel{7 lambda_C implies}
\gbeg{3}{6}
\got{1}{F} \got{1}{F} \gnl
\gcl{1} \gcl{1} \gu{1} \gnl
\glmptb \gnot{\lambda_C} \gcmpt \grmptb \gnl
\gcl{1} \glcm \gnl
\glmpt \gnot{\hspace{-0,34cm}\Epsilon_C} \grmpt \gcl{1} \gnl
\gob{5}{F} \gnl
\gend=
\gbeg{2}{5}
\got{1}{F} \got{1}{F} \gnl
\gcl{3} \gcl{1} \gnl
\gvac{1} \gcuf{1} \gnl
\gob{1}{F} \gnl
\gend
\end{equation} 
\end{tabular}
\end{center}

\begin{equation}\eqlabel{8 lambda_C implies}
\gbeg{3}{5}
\got{1}{F} \got{3}{F} \gnl
\gwmuf{3} \gnl
\gvac{1} \gcl{1} \gnl
\gwcmf{3} \gnl
\gob{1}{F} \gob{3}{F}
\gend=
\gbeg{5}{8}
\got{1}{F} \got{3}{F} \gnl
\gcl{1} \gvac{1} \gcl{1} \gnl
\gcl{2} \gwcmf{3} \gnl
\gvac{1} \gcl{1} \glcm \gnl
\glmpt \gnot{\lambda_C} \gcmptb \grmptb \gcl{1} \gnl
\gvac{1} \gcl{2} \gmuf{1} \gnl
\gvac{2} \gcn{1}{1}{2}{2} \gnl
\gvac{1} \gob{1}{F} \gob{2}{F} \gnl
\gend
\end{equation}


\subsubsection{Applying pre-units and pre-counits of $F$}

Finally, we will apply the (co)monadic (co)unity of $F$, more precisely: 
$\gbeg{1}{2}
\got{1}{F} \gnl
\gcu{1} \gnl
\gend \hspace{0,1cm},
\gbeg{1}{2}
\got{1}{F} \gnl
\gcuf{1} \gnl
\gend \hspace{0,1cm},
\gbeg{1}{2}
\gu{1} \gnl
\gob{1}{F} \gnl
\gend$ 
and 
$\gbeg{1}{2}
\guf{1} \gnl
\gob{1}{F} \gnl
\gend
$ \hspace{0,2cm} to the (majority of) the axioms of a biwreath at appropriate places to obtain further information. Before we do so, we introduce the following notation: 
\begin{center} \hspace{-1,4cm} 
\begin{tabular}{p{3.6cm}p{0cm}p{3.6cm}p{0cm}p{3.6cm}p{0cm}p{3.6cm}}  
\begin{equation} \eqlabel{right F-mod}
\gbeg{2}{3}
\got{1}{B} \got{1}{F} \gnl
\grmo \gcl{1} \gnl 
\gob{1}{B}
\gend:=
\gbeg{2}{5}
\got{1}{B} \got{1}{F} \gnl
\gcl{1} \gcl{1} \gnl
\glmptb \gnot{\hspace{-0,34cm}\psi} \grmptb \gnl
\gcu{1} \gcl{1} \gnl
\gob{3}{B} 
\gend
\end{equation} & &
\hspace{-0,4cm} 
\begin{equation} \eqlabel{right F-comod}
\gbeg{2}{3}
\got{1}{B} \gnl
\gcl{1} \hspace{-0,42cm} \glmf \gnl  
\gvac{1} \gob{1}{B} \gob{1}{F}
\gend:=
\gbeg{2}{5}
\got{3}{B} \gnl
\guf{1} \gcl{1} \gnl
\glmptb \gnot{\hspace{-0,34cm}\phi} \grmptb \gnl
\gcl{1} \gcl{1} \gnl
\gob{1}{B} \gob{1}{F} 
\gend
\end{equation} & &
\hspace{-0,4cm} 
\begin{equation} \eqlabel{left F-comod}
\gbeg{2}{3}
\got{3}{B} \gnl
\grmo \gvac{1} \gcl{1} \gnl
\gob{1}{F} \gob{1}{B}
\gend:=
\gbeg{2}{5}
\got{1}{B} \gnl
\gcl{1} \gu{1} \gnl
\glmptb \gnot{\hspace{-0,34cm}\psi} \grmptb \gnl
\gcl{1} \gcl{1} \gnl
\gob{1}{F} \gob{1}{B} 
\gend
\end{equation} & &
\hspace{-0,4cm} 
\begin{equation} \eqlabel{left F-mod}
\gbeg{2}{3}
\got{1}{F} \got{1}{B} \gnl
\glmf \gcn{1}{1}{-1}{-1} \gnl
\gob{3}{B}
\gend:=
\gbeg{2}{5}
\got{1}{F} \got{1}{B} \gnl
\gcl{1} \gcl{1} \gnl
\glmptb \gnot{\hspace{-0,34cm}\phi} \grmptb \gnl
\gcl{1} \gcuf{1} \gnl
\gob{1}{B}  
\gend
\end{equation} 
\end{tabular}
\end{center}

\begin{center} \hspace{-1,3cm} 
\begin{tabular}{p{6cm}p{0cm}p{6cm}}
\begin{equation} \eqlabel{sigma}
\gbeg{2}{5}
\got{1}{F} \got{1}{F} \gnl
\gcl{1} \gcl{1} \gnl
\glmpt \gnot{\hspace{-0,34cm}\sigma} \grmptb \gnl
\gvac{1} \gcl{1} \gnl
\gob{3}{B}
\gend:=
\gbeg{2}{5}
\got{1}{F} \got{1}{F} \gnl
\gcl{1} \gcl{1} \gnl
\glmptb \gnot{\hspace{-0,34cm}\mu_M} \grmptb \gnl
\gcu{1} \gcl{1} \gnl
\gob{3}{B}
\gend
\end{equation} & &
\begin{equation} \eqlabel{ro}
\gbeg{2}{5}
\got{3}{B} \gnl
\gvac{1} \gcl{1} \gnl
\glmpb \gnot{\hspace{-0,34cm}\rho} \grmptb \gnl
\gcl{1} \gcl{1} \gnl
\gob{1}{F}\gob{1}{F}
\gend:=
\gbeg{2}{5}
\got{3}{B} \gnl
\guf{1} \gcl{1} \gnl
\glmptb \gnot{\hspace{-0,34cm}\Delta_C} \grmptb \gnl
\gcl{1} \gcl{1} \gnl
\gob{1}{F}\gob{1}{F}
\gend
\end{equation} 
\end{tabular}
\end{center}

\begin{center} \hspace{-1,3cm} 
\begin{tabular}{p{6cm}p{0cm}p{6cm}}
\begin{equation} \eqlabel{fi-lambda}
\gbeg{3}{3}
\gcn{1}{1}{2}{1} \gelt{\s\Phi_{\lambda}} \gcn{1}{1}{0}{1} \gnl  %
\gcl{1} \gcl{1} \gcl{1} \gnl
\gob{1}{F} \gob{1}{F} \gob{1}{B} \gnl
\gend:=
\gbeg{3}{4}
\gvac{1} \gu{1} \gnl
\glmpb \gnot{\Delta_M} \gcmptb \grmpb \gnl
\gcl{1} \gcl{1} \gcl{1} \gnl
\gob{1}{F} \gob{1}{F} \gob{1}{B} \gnl
\gend 
\end{equation} & &
\hspace{-0,4cm} 
\begin{equation} \eqlabel{omega}
\gbeg{3}{3}
\got{1}{F} \got{1}{F} \got{1}{B} \gnl
\gcl{1} \gcl{1} \gcl{1} \gnl
\gcn{1}{1}{1}{2} \gelt{\omega} \gcn{1}{1}{1}{0} \gnl   
\gend:=
\gbeg{2}{5}
\got{1}{F} \got{1}{B} \got{1}{B} \gnl
\gcl{1} \gcl{1} \gcl{1} \gnl
\glmpt \gnot{\mu_C} \gcmptb \grmpt \gnl
\gvac{1} \gcuf{1} \gnl
\gend
\end{equation} 
\end{tabular}
\end{center}

Now we do the following: to the 2 axioms for $\psi$ we apply 
$\gbeg{1}{2}
\got{1}{F} \gnl
\gcu{1} \gnl
\gend
$ \hspace{0,1cm} and 
$\gbeg{1}{2}
\gu{1} \gnl
\gob{1}{F} \gnl
\gend
$ \hspace{0,1cm} and to the 2 axioms for $\phi$ we apply 
$\gbeg{1}{2}
\got{1}{F} \gnl
\gcuf{1} \gnl
\gend
$ \hspace{0,1cm} and 
$\gbeg{1}{2}
\guf{1} \gnl
\gob{1}{F} \gnl
\gend
$. \hspace{0,1cm} 
Moreover, to the 2-cell conditions and the monad laws for $\mu_M, \eta_M$ we apply 
$\gbeg{1}{2}
\got{1}{F} \gnl
\gcu{1} \gnl
\gend
$ \hspace{0,1cm} 
and to those of $\mu_C, \eta_C$ we apply 
$\gbeg{1}{2}
\got{1}{F} \gnl
\gcuf{1} \gnl
\gend
$. \hspace{0,1cm} 
Dually, to the 2-cell conditions and the comonad laws of $\Delta_M, \Epsilon_M$ we apply 
$\gbeg{1}{2}
\gu{1} \gnl
\gob{1}{F} \gnl
\gend
$ \hspace{0,1cm} 
and to those of $\Delta_C, \Epsilon_C$ we apply 
$\gbeg{1}{2}
\guf{1} \gnl
\gob{1}{F} \gnl
\gend
$. \hspace{0,1cm} This is what we get: 
 \begin{center} \hspace{-1,5cm} 
\begin{tabular}{p{7.2cm}p{0cm}p{6cm}}
\begin{equation}\eqlabel{F mod alg}
\gbeg{3}{5}
\got{1}{B}\got{1}{B}\got{1}{F}\gnl
\gcl{1} \glmpt \gnot{\hspace{-0,34cm}\psi} \grmptb \gnl
\grmo \gcl{1} \gvac{1} \gcl{1} \gnl
\gwmu{3} \gnl
\gob{3}{B}
\gend=
\gbeg{3}{5}
\got{1}{B}\got{1}{B}\got{1}{F}\gnl
\gmu \gcn{1}{1}{1}{0} \gnl
\gvac{1} \hspace{-0,34cm} \grmo \gcl{1} \gnl
\gvac{1} \gcl{1} \gnl
\gvac{1} \gob{1}{B}
\gend
\end{equation} & &
\begin{equation}\eqlabel{F mod alg unit}
\gbeg{2}{5}
\got{3}{F} \gnl
\gu{1} \gcl{1} \gnl
\grmo \gcl{1} \gnl
\gcl{1} \gnl
\gob{1}{B}
\gend=
\gbeg{2}{4}
\got{1}{F} \gnl
\gcu{1} \gnl
\gu{1} \gnl
\gob{1}{B}
\gend
\end{equation}

\\  {\hspace{2cm} \footnotesize module monad} & &  { \hspace{1cm} \footnotesize module monad unity} \\
\multicolumn{2}{c}{{ \hspace{5cm} \footnotesize \quad\qquad from $\psi$ axioms \equref{psi laws for B}}} 
\end{tabular}
\end{center} 

 \begin{center} \hspace{-1,5cm} 
\begin{tabular}{p{7.2cm}p{0cm}p{6cm}}
\begin{equation}\eqlabel{F comod alg}
\gbeg{3}{5}
\got{1}{B}\got{3}{B}\gnl
\gcl{1} \grmo \gvac{1} \gcl{1} \gnl
\glmptb \gnot{\hspace{-0,34cm}\psi} \grmptb \gcl{1} \gnl
\gcl{1} \gmu \gnl
\gob{1}{F} \gob{2}{B}
\gend=
\gbeg{3}{5}
\got{1}{\hspace{0,22cm}B}\got{1}{\hspace{0,22cm}B}\gnl
\gvac{1} \hspace{-0,34cm} \gmu \gnl
\gvac{1} \hspace{-0,22cm} \grmo \gvac{1} \gcl{1} \gnl
\gvac{1} \gcl{1} \gcl{1} \gnl
\gvac{1} \gob{1}{F} \gob{1}{B}
\gend
\end{equation} & &  
\begin{equation}\eqlabel{F comod alg unit}
\gbeg{2}{4}
\got{3}{} \gnl
\gvac{1} \gu{1} \gnl
\grmo \gvac{1} \gcl{1} \gnl
\gob{1}{F} \gob{1}{B}
\gend=
\gbeg{2}{4}
\got{1}{} \gnl
\gu{1} \gu{1} \gnl
\gcl{1} \gcl{1} \gnl
\gob{1}{F} \gob{1}{B}
\gend
\end{equation}

\\  {\hspace{2cm} \footnotesize comodule monad} & &  { \hspace{1cm} \footnotesize comodule monad unity} \\
\multicolumn{2}{c}{{ \hspace{5cm} \footnotesize \quad\qquad from $\psi$ axioms \equref{psi laws for B}}} 
\end{tabular}
\end{center} 

 \begin{center} \hspace{-1,5cm} 
\begin{tabular}{p{7.2cm}p{0cm}p{6cm}}
\begin{equation}\eqlabel{F mod coalg}
\gbeg{3}{5}
\got{1}{F} \got{2}{B}\gnl
\gcl{1} \gcmu \gnl
\glmptb \gnot{\hspace{-0,34cm}\phi} \grmptb \gcl{1} \gnl
\gcl{1} \glmf \gcn{1}{1}{-1}{-1} \gnl
\gob{1}{B} \gob{3}{B}  
\gend=
\gbeg{3}{5}
\got{1}{F} \got{1}{B}\gnl
\gcl{1} \gcl{1} \gnl
\glmf \gcn{1}{1}{-1}{-1} \gnl
\gvac{1} \hspace{-0,34cm} \gcmu \gnl
\gvac{1} \gob{1}{B} \gob{1}{B} 
\gend
\end{equation} & &  \vspace{0,1cm}
\begin{equation}\eqlabel{F mod coalg counit}
\gbeg{2}{4}
\got{1}{F} \got{1}{B} \gnl
\glmf \gcn{1}{1}{-1}{-1} \gnl
\gvac{1} \gcu{1} \gnl
\gob{3}{} 
\gend=
\gbeg{2}{4}
\got{1}{F} \got{1}{B} \gnl
\gcl{1} \gcl{1} \gnl
\gcuf{1}  \gcu{1} \gnl
\gob{1}{} 
\gend
\end{equation}

\\  {\hspace{2cm} \footnotesize module comonad} & &  { \hspace{1cm} \footnotesize module comonad counity} \\
\multicolumn{2}{c}{{ \hspace{5cm} \footnotesize \quad\qquad from $\phi$ axioms \equref{phi laws for B}}} 
\end{tabular}
\end{center} 

\begin{center} \hspace{-1,5cm} 
\begin{tabular}{p{7.2cm}p{0cm}p{6cm}}
\begin{equation}\eqlabel{F comod coalg}
\gbeg{3}{5}
\got{3}{B}\gnl
\gwcm{3} \gnl
\gcl{1} \hspace{-0,42cm} \glmf \gcl{1} \gnl
\gvac{1} \gcl{1} \glmptb \gnot{\hspace{-0,34cm}\phi} \grmptb \gnl
\gvac{1} \gob{1}{B} \gob{1}{B} \gob{1}{F} 
\gend=
\gbeg{3}{4}
\got{1}{B}\gnl
\gcl{1} \hspace{-0,42cm} \glmf \gnl
\gvac{1} \hspace{-0,2cm} \gcmu \gcn{1}{1}{0}{1} \gnl
\gvac{1} \gob{1}{B} \gob{1}{B} \gob{1}{F} 
\gend
\end{equation} & & \vspace{0,2cm}
\begin{equation}\eqlabel{F comod coalg counit}
\gbeg{2}{4}
\got{1}{B} \gnl
\gcl{1} \hspace{-0,42cm} \glmf \gnl
\gvac{1} \gcu{1} \gcl{1} \gnl
\gob{5}{F} 
\gend=
\gbeg{2}{4}
\got{1}{B} \gnl
\gcu{1} \gnl
\guf{1} \gnl
\gob{1}{F} 
\gend
\end{equation}

\\  {\hspace{2cm} \footnotesize comodule comonad} & &  { \hspace{1cm} \footnotesize comodule comonad counity} \\
\multicolumn{2}{c}{{ \hspace{5cm} \footnotesize \quad\qquad from $\phi$ axioms \equref{psi laws for B}}} 
\end{tabular}
\end{center} 


\begin{center} 
\begin{tabular} {p{6cm}p{1cm}p{6cm}} 
\begin{equation} \eqlabel{weak action} 
\gbeg{3}{6}
\got{1}{B} \got{1}{F} \got{1}{F} \gnl
\glmptb \gnot{\hspace{-0,34cm}\psi} \grmptb \gcl{1} \gnl
\gcl{1} \glmptb \gnot{\hspace{-0,34cm}\psi} \grmptb \gnl
\glmptb \gnot{\hspace{-0,34cm}\sigma} \grmpt \gcl{1} \gnl
\gwmu{3} \gnl
\gob{3}{B}
\gend=
\gbeg{3}{5}
\got{1}{B} \got{1}{F} \got{1}{F}\gnl
\gcl{1} \glmptb \gnot{\hspace{-0,34cm}\mu_M} \grmptb \gnl
\grmo \gcl{1} \gvac{1} \gcl{1} \gnl
\gwmu{3} \gnl
\gob{3}{B}
\gend
\end{equation} & & 
\begin{equation} \eqlabel{weak action unity} 
\gbeg{3}{4}
\got{1}{} \got{3}{B}\gnl
\glmpb \gnot{\hspace{-0,34cm}\eta_M} \grmpb \gcl{1} \gnl
\gcu{1} \gmu \gnl
\gob{4}{B}
\gend=
\gbeg{3}{5}
\got{1}{B}\gnl
\gcl{1} \glmpb \gnot{\hspace{-0,34cm}\eta_M} \grmpb \gnl
\grmo \gcl{1} \gvac{1} \gcl{1} \gnl
\gwmu{3} \gnl
\gob{3}{B}
\gend
\end{equation} 
\end{tabular}
\end{center}
\vspace{-0,6cm}
$$ \textnormal{ \footnotesize twisted action}  \hspace{5,5cm}  \textnormal{\footnotesize twisted action unity} $$ \vspace{-0,7cm}
$$ \textnormal{\footnotesize from 2-cell cond. of $\mu_M$ \equref{2-cell mu_M}} \hspace{4cm} \textnormal{\footnotesize from 2-cell cond. of $\eta_M$ \equref{2-cell eta_M}} $$

\begin{center} \hspace{-1,4cm} 
\begin{tabular} {p{7cm}p{0.6cm}p{6cm}} 
\begin{equation} \eqlabel{quasi coaction} 
\gbeg{4}{5}
\gvac{3} \got{1}{B} \gnl
\gvac{2} \grmo \gvac{1} \gcl{1} \gnl
\glmpb \gnot{\Delta_M} \gcmpb \grmptb \gcl{1} \gnl
\gcl{1} \gcl{1} \gmu \gnl
\gob{1}{F} \gob{1}{F} \gob{2}{B}
\gend=
\gbeg{3}{6}
\got{1}{B} \gnl
\gcl{1} \gcn{1}{1}{2}{1} \gelt{\s\Phi_{\lambda}} \gcn{1}{1}{0}{1} \gnl
\glmptb \gnot{\hspace{-0,34cm}\psi} \grmptb \gcl{1} \gcl{2} \gnl
\gcl{1} \glmptb \gnot{\hspace{-0,34cm}\psi} \grmptb \gnl
\gcl{1} \gcl{1} \gmu \gnl
\gob{1}{F} \gob{1}{F} \gob{2}{B}
\gend
\end{equation} & & 
\begin{equation} \eqlabel{quasi coaction counity} 
\gbeg{2}{5}
\got{3}{B} \gnl
\grmo \gvac{1} \gcl{1} \gnl
\gbmp{\Epsilon_M} \gcl{1} \gnl
\gmu \gnl
\gob{2}{B} 
\gend=
\gbeg{2}{5}
\got{1}{B} \gnl
\gcl{1} \gu{1} \gnl
\gcl{1} \gbmp{\Epsilon_M} \gnl
\gmu \gnl
\gob{2}{B} 
\gend
\end{equation} 
\end{tabular}
\end{center} \vspace{-0,5cm}
$$ \textnormal{ \footnotesize quasi coaction}  \hspace{5,5cm}  \textnormal{\footnotesize quasi coaction counity} $$ \vspace{-0,7cm}
$$ \textnormal{\footnotesize from 2-cell cond. of $\Delta_M$ \equref{2-cell Delta_M}} \hspace{4cm} \textnormal{\footnotesize from 2-cell cond. of $\Epsilon_M$ \equref{2-cell Epsilon_M}} $$


\begin{center} \hspace{-1cm} 
\begin{tabular} {p{7cm}p{0cm}p{6cm}} 
\begin{equation}\eqlabel{dual quasi action}
\gbeg{4}{6}
\got{1}{F} \got{1}{F} \got{2}{B} \gnl
\gcl{1} \gcl{1} \gcmu \gnl
\gcl{1} \glmptb \gnot{\hspace{-0,34cm}\phi} \grmptb \gcl{2} \gnl
\glmptb \gnot{\hspace{-0,34cm}\phi} \grmptb \gcl{1} \gnl
\gcl{1} \gcn{1}{1}{1}{2} \gelt{\omega} \gcn{1}{1}{1}{0} \gnl
\gob{1}{B} 
\gend=
\gbeg{4}{6}
\got{1}{F} \got{1}{F} \got{2}{B} \gnl
\gcl{1} \gcl{1} \gcmu \gnl
\glmpt \gnot{\mu_C} \gcmpt \grmptb \gcl{1} \gnl
\gvac{2} \glmf \gcn{1}{1}{-1}{-1} \gnl
\gvac{3} \gcl{1} \gnl
\gvac{3} \gob{1}{B} 
\gend
\end{equation} & & 
\begin{equation}\eqlabel{dual quasi action unity}
\gbeg{2}{5}
\got{2}{B}\gnl
\gcmu \gnl
\gcl{1} \gbmp{\eta_C} \gnl
\gcl{1} \gcuf{1} \gnl
\gob{1}{B} 
\gend=
\gbeg{2}{5}
\got{2}{B}\gnl
\gcmu \gnl
\gbmp{\eta_C} \gcl{1} \gnl
\glmf \gcn{1}{1}{-1}{-1} \gnl
\gvac{1} \gob{1}{B} 
\gend
\end{equation}
\end{tabular}
\end{center} \vspace{-0,5cm}
$$ \textnormal{ \footnotesize quasi action}  \hspace{5,5cm}  \textnormal{\footnotesize quasi action unity} $$ \vspace{-0,7cm}
$$ \textnormal{\footnotesize from 2-cell cond. of $\mu_C$ \equref{2-cell mu_C}} \hspace{4cm} \textnormal{\footnotesize from 2-cell cond. of $\eta_C$ \equref{2-cell eta_C}} $$

\begin{center} 
\begin{tabular} {p{6cm}p{1.5cm}p{6cm}} 
\begin{equation}\eqlabel{weak coaction}
\gbeg{3}{5}
\got{3}{B} \gnl
\gwcm{3} \gnl
\gcl{1} \hspace{-0,42cm} \glmf \gcl{1} \gnl 
\gvac{1} \gcl{1} \glmptb \gnot{\hspace{-0,34cm}\Delta_C} \grmptb \gnl
\gvac{1} \gob{1}{B} \gob{1}{F} \gob{1}{F} 
\gend=
\gbeg{3}{6}
\got{4}{B} \gnl
\gvac{1} \gcmu \gnl
\glmpb \gnot{\hspace{-0,34cm}\rho} \grmptb \gcl{1} \gnl
\gcl{1} \glmptb \gnot{\hspace{-0,34cm}\phi} \grmptb \gnl
\glmptb \gnot{\hspace{-0,34cm}\phi} \grmptb \gcl{1} \gnl
\gob{1}{B} \gob{1}{F} \gob{1}{F} 
\gend
\end{equation} & & \vspace{0,1cm}
\begin{equation}\eqlabel{weak coaction counity}
\gbeg{3}{5}
\got{3}{B}\gnl
\gwcm{3} \gnl
\gcl{1} \hspace{-0,42cm} \glmf \gcl{1} \gnl
\gvac{1} \gcl{1} \glmpt \gnot{\hspace{-0,34cm}\Epsilon_C} \grmpt \gnl
\gob{3}{B}
\gend=
\gbeg{3}{5}
\got{4}{B}\gnl
\guf{1} \gcmu \gnl
\glmpt \gnot{\hspace{-0,34cm}\Epsilon_C} \grmpt \gcl{2} \gnl
\gob{5}{B}
\gend
\end{equation} 
\end{tabular}
\end{center} \vspace{-0,5cm}
$$ \textnormal{ \footnotesize twisted coaction}  \hspace{5,5cm}  \textnormal{\footnotesize twisted coaction counity} $$ \vspace{-0,7cm}
$$ \textnormal{\footnotesize from 2-cell cond. of $\Delta_C$ \equref{2-cell Delta_C}} \hspace{4cm} \textnormal{\footnotesize from 2-cell cond. of $\Epsilon_C$ \equref{2-cell Epsilon_C}} $$

\begin{center} \hspace{4cm} 
\begin{tabular} {p{6cm}p{1cm}p{6.8cm}} 
\begin{equation}\eqlabel{2-cocycle condition}
\gbeg{3}{5}
\got{1}{F} \got{1}{F} \got{1}{F}\gnl
\gcl{1} \glmptb \gnot{\hspace{-0,34cm}\mu_M} \grmptb \gnl
\glmpt \gnot{\hspace{-0,34cm}\sigma} \grmptb \gcl{1} \gnl
\gvac{1} \gmu \gnl
\gob{4}{B}
\gend=
\gbeg{3}{6}
\got{1}{F} \got{1}{F} \got{1}{F} \gnl
\glmptb \gnot{\hspace{-0,34cm}\mu_M} \grmptb \gcl{1} \gnl
\gcl{1} \glmptb \gnot{\hspace{-0,34cm}\psi} \grmptb \gnl
\glmpt \gnot{\hspace{-0,34cm}\sigma} \grmptb \gcl{1} \gnl
\gvac{1} \gmu \gnl
\gob{4}{B}
\gend
\end{equation} & & 
\begin{equation}\eqlabel{normalized 2-cocycle}
\gbeg{3}{6}
\got{1}{} \got{3}{F}\gnl
\glmpb \gnot{\hspace{-0,34cm}\eta_M} \grmpb \gcl{1} \gnl
\gcl{1} \glmptb \gnot{\hspace{-0,34cm}\psi} \grmptb \gnl
\glmpt \gnot{\hspace{-0,34cm}\sigma} \grmptb \gcl{1} \gnl
\gvac{1} \gmu \gnl
\gob{4}{B}
\gend=
\gbeg{1}{4}
\got{1}{F}\gnl
\gcu{1} \gnl
\gu{1} \gnl
\gob{1}{B}
\gend=
\gbeg{3}{5}
\got{1}{F}\gnl
\gcl{1} \glmpb \gnot{\hspace{-0,34cm}\eta_M} \grmpb \gnl
\glmpt \gnot{\hspace{-0,34cm}\sigma} \grmptb \gcl{1} \gnl
\gvac{1} \gmu \gnl
\gob{4}{B}
\gend
\end{equation} 
\end{tabular}
\end{center} \vspace{-0,5cm}
$$ \textnormal{ \footnotesize 2-cocycle condition}  \hspace{5,5cm}  \textnormal{\footnotesize normalized 2-cocycle} $$ \vspace{-0,7cm}
$$ \textnormal{\footnotesize from monad law for $\mu_M$ \equref{monad law mu_M}} \hspace{4cm} \textnormal{\footnotesize from monad law for $\eta_M$ \equref{monad law eta_M}} $$

\begin{center} 
\begin{tabular}{p{7.2cm}p{1cm}p{6.8cm}}
\begin{equation} \eqlabel{3-cycle omega} 
\gbeg{5}{4}
\got{1}{F} \got{1}{F} \got{1}{F} \got{2}{B} \gnl
\gcl{1} \gcl{1} \gcl{1} \gcmu \gnl
\gcl{1} \glmpt \gnot{\mu_C} \gcmptb \grmpt \gcl{1} \gnl
\gcn{2}{2}{1}{4} \gcl{1} \gcn{2}{2}{3}{0}  \gnl
\gvac{2} \gelt{\omega} \gnl
\gend=
\gbeg{4}{4}
\got{1}{F} \got{1}{F} \got{1}{F} \got{2}{B} \gnl
\gcl{2} \gcl{2} \gcl{1} \gcmu \gnl
\gvac{2} \glmptb \gnot{\hspace{-0,34cm}\phi} \grmptb \gcl{2} \gnl
\glmpt \gnot{\mu_C} \gcmpt \grmptb \gcl{1} \gnl
\gvac{2} \gcn{1}{1}{1}{2} \gelt{\omega} \gcn{1}{1}{1}{0} \gnl
\gend
\end{equation} &  & \vspace{-0,9cm}
\begin{equation}\eqlabel{normalized 3-cycle omega}
\gbeg{3}{4}
\got{1}{F} \got{2}{B} \gnl
\gcl{1}\gcmu \gnl
\gcl{1} \gbmp{\s\eta_{C}} \gcl{1} \gnl
\gcn{1}{1}{1}{2} \gelt{\omega} \gcn{1}{1}{1}{0} \gnl
\gend=
\gbeg{2}{4}
\got{1}{F} \got{1}{B} \gnl
\gcl{1} \gcl{1} \gnl
\gcuf{1} \gcu{1} \gnl
\gend=
\gbeg{4}{5}
\got{1}{F} \got{2}{B} \gnl
\gcl{1} \gcmu \gnl
\glmptb \gnot{\hspace{-0,34cm}\phi} \grmptb \gcl{2} \gnl
\gbmp{\s\eta_{C}} \gcl{1} \gnl
\gcn{1}{1}{1}{2} \gelt{\omega} \gcn{1}{1}{1}{0} \gnl
\gend
\end{equation}
\end{tabular}
\end{center} \vspace{-0,5cm}
$$ \textnormal{ \footnotesize 3-cycle condition $\omega$}  \hspace{5,5cm}  \textnormal{\footnotesize normalized 3-cycle $\omega$} $$ \vspace{-0,7cm}
$$ \textnormal{\footnotesize from monad law for $\mu_C$ \equref{monad law mu_C}} \hspace{4cm} \textnormal{\footnotesize from monad law for $\eta_C$ \equref{monad law eta_C}} $$

\begin{center} 
\begin{tabular}{p{7.2cm}p{1cm}p{6.8cm}}
\begin{equation} \eqlabel{3-cocycle cond fi-lambda} 
\gbeg{5}{5}
\gvac{2} \gcn{1}{1}{2}{1} \gelt{\s\Phi_{\lambda}} \gcn{1}{1}{0}{1} \gnl
\glmpb \gnot{\Delta_M} \gcmpb \grmptb \gcl{1} \gcl{1} \gnl
\gcl{2} \gcl{2}  \glmptb \gnot{\hspace{-0,34cm}\psi} \grmptb \gcl{1} \gnl
\gvac{2} \gcl{1} \gmu \gnl
\gob{1}{F} \gob{1}{F} \gob{1}{F} \gob{2}{B} \gnl
\gend=
\gbeg{5}{5}
\gcn{1}{1}{2}{1} \gelt{\s\Phi_{\lambda}} \gcn{2}{2}{0}{5}  \gnl
\gcl{1} \gcl{1} \gnl
\gcl{1} \glmptb \gnot{\Delta_M} \gcmpb \grmpb \gcl{1} \gnl
\gcl{1} \gcl{1} \gcl{1} \gmu \gnl
\gob{1}{F} \gob{1}{F} \gob{1}{F} \gob{2}{B} \gnl
\gend
\end{equation} &  &
\begin{equation}\eqlabel{normalized 3-cocycle fi-lambda}
\gbeg{3}{5}
\gcn{1}{1}{2}{1} \gelt{\s\Phi_{\lambda}} \gcn{1}{1}{0}{1} \gnl 
\gbmp{\Epsilon_M} \gcl{1} \gcl{2} \gnl
\glmptb \gnot{\hspace{-0,34cm}\psi} \grmptb \gnl
\gcl{1} \gmu \gnl
\gob{1}{F} \gob{2}{B} \gnl
\gend=
\gbeg{2}{3}
\gu{1} \gu{1} \gnl
\gcl{1} \gcl{1} \gnl
\gob{1}{F} \gob{1}{B} \gnl
\gend=
\gbeg{3}{4}
\gcn{1}{1}{2}{1} \gelt{\s\Phi_{\lambda}} \gcn{1}{1}{0}{1} \gnl 
\gcl{1} \gbmp{\Epsilon_M} \gcl{1} \gnl
\gcl{1} \gmu \gnl
\gob{1}{F} \gob{2}{B} \gnl
\gend
\end{equation}
\end{tabular}
\end{center} \vspace{-0,5cm}
$$ \textnormal{ \footnotesize 3-cocycle condition for $\Phi_{\lambda}$}  \hspace{5cm}  \textnormal{\footnotesize normalized 3-cocycle $\Phi_{\lambda}$} $$ \vspace{-0,7cm}
$$ \textnormal{\footnotesize from comonad law for $\Delta_M$ \equref{comonad law Delta_M}} \hspace{4cm} \textnormal{\footnotesize from comonad law for $\Epsilon_M$ \equref{comonad law Epsilon_M}} $$

\begin{center} \hspace{3.5cm} 
\begin{tabular}{p{6cm}p{1.5cm}p{6.8cm}}
\begin{equation} \eqlabel{2-cycle ro} 
\gbeg{3}{6}
\got{4}{B}\gnl
\gvac{1} \gcmu \gnl
\glmpb \gnot{\hspace{-0,34cm}\rho} \grmptb \gcl{1} \gnl
\gcl{1} \glmptb \gnot{\hspace{-0,34cm}\phi} \grmptb \gnl
\glmptb \gnot{\hspace{-0,34cm}\Delta_C} \grmptb \gcl{1} \gnl
\gob{1}{F}\gob{1}{F}  \gob{1}{F} 
\gend=
\gbeg{3}{5}
\got{4}{B}\gnl
\gvac{1} \gcmu \gnl
\glmpb \gnot{\hspace{-0,34cm}\rho} \grmptb \gcl{1} \gnl
\gcl{1} \glmptb \gnot{\hspace{-0,34cm}\Delta_C} \grmptb \gnl
\gob{1}{F}\gob{1}{F}  \gob{1}{F} 
\gend
\end{equation} &  &
\begin{equation}\eqlabel{normalized 2-cycle ro}
\gbeg{3}{6}
\got{4}{B}\gnl
\gvac{1} \gcmu \gnl
\glmpb \gnot{\hspace{-0,34cm}\rho} \grmptb \gcl{1} \gnl
\gcl{1} \glmptb \gnot{\hspace{-0,34cm}\phi} \grmptb \gnl
\glmpt \gnot{\hspace{-0,34cm}\Epsilon_C} \grmpt \gcl{1} \gnl
\gob{5}{F} 
\gend=
\gbeg{1}{4}
\got{1}{B} \gnl
\gcu{1} \gnl
\guf{1} \gnl
\gob{1}{F} 
\gend=
\gbeg{3}{5}
\got{4}{B}\gnl
\gvac{1} \gcmu \gnl
\glmpb \gnot{\hspace{-0,34cm}\rho} \grmptb \gcl{1} \gnl
\gcl{1} \glmpt \gnot{\hspace{-0,34cm}\Epsilon_C} \grmpt \gnl
\gob{1}{F} \gnl
\gend
\end{equation}
\end{tabular}
\end{center} \vspace{-0,5cm}
$$ \textnormal{ \footnotesize 2-cycle condition for $\rho$}  \hspace{5,5cm}  \textnormal{\footnotesize normalized 2-cycle $\rho$} $$ \vspace{-0,7cm}
$$ \textnormal{\footnotesize from comonad law for $\Delta_C$ \equref{comonad law Delta_C}} \hspace{4cm} \textnormal{\footnotesize from comonad law for $\Epsilon_C$ \equref{comonad law Epsilon_C}} $$

\subsection{Canonical structures}

In \rmref{embed into complete} we defined the inclusion 2-functors $E_M: \Mnd(\K)\to\EM^M(\K)$ and $E_C: \Comnd(\K)\to\EM^C(\K)$. For a 2-cell $\rho_M: F\to G$ in $\Mnd(\K)$
it is $E_M(\rho_M)=\rho_M\times\eta_B$, and for a 2-cell $\rho_C: F\to G$ it is $E_C(\rho_C)=\rho_C\times\Epsilon_B$. We also have the projection 2-functors as in \ssref{Inc and proj}:
$\pi_M: \EM^M(\K)\to\Mnd(\K)$ and $\pi_C: \EM^C(\K)\to\Comnd(\K)$, given 2-cells $\rho_M: F\to G$ in $\EM^M(\K)$ and $\rho_C: F\to G$ in $\EM^C(\K)$,
with their determining 2-cells $\hat\rho_M: F\to GB$ and $\hat\rho_C: FB\to G$ in $\K$, it is
$\pi_M(\tau_M)=(G\times\Epsilon_B)\comp\tau_M$ and $\pi_C(\tau_C)=\tau_C\comp(F\times\eta_B)$. Clearly, it is: $\pi_M\comp E_M=\Id$ and $\pi_C\comp E_C=\Id$.


\begin{defn} \delabel{canonical}
We say that a 2-cell  $\rho_M$ in $\EM^M(\K)$ is {\em canonical} if $\rho_M=E_M\comp\pi_M(\rho_M)$. \\
We say that a 2-cell  $\rho_C$ in $\EM^C(\K)$ is {\em canonical} if $\rho_C=E_C\comp\pi_C(\rho_C)$. \\
\end{defn}

Pictorially, 2-cells $\rho_M: F\to G$ in $\EM^M(\K)$ and $\rho_C: F\to G$ in $\EM^C(\K)$, with their determining 2-cells $\hat\rho_M: F\to GB$ and $\hat\rho_C: FB\to G$ in $\K$,
are canonical if it holds:
\begin{equation} \eqlabel{canonical 2-cells}
\hat\rho_M=
\gbeg{2}{5}
\got{1}{F} \gnl
\glmptb \gnot{\hspace{-0,34cm}\hat\rho_M} \grmpb \gnl
\gcl{1} \gcu{1} \gnl
\gcl{1} \gu{1} \gnl
\gob{1}{G} \gob{1}{B} \gnl
\gend
\qquad\textnormal{and}\qquad
\hat\rho_C=
\gbeg{2}{5}
\got{1}{F} \got{1}{B} \gnl
\gcl{1} \gcu{1} \gnl
\gcl{1} \gu{1} \gnl
\glmptb \gnot{\hspace{-0,34cm}\hat\rho_C} \grmp \gnl
\gob{1}{G.} \gnl
\gend
\end{equation} 
In this case, we will indeed say that the 2-cells $\hat\rho_M$ and $\hat\rho_C$ in $\K$ are {\em canonical}.

For any 2-cells $\hat\rho_M,\hat\rho_C$ in $\K$ determining $\rho_M$ in $\EM^M(\K)$ and $\rho_C$ in $\EM^C(\K)$, we will call
$\pi_M(\hat\rho_M)$ and $\pi_C(\hat\rho_C)$ their {\em canonical restrictions}, we will denote them by $\widetilde{\hat\rho_M}$ and $\widetilde{\hat\rho_C}$.

\medskip

We say that a 2-cocycle $\sigma$, 2-cycle $\rho$, 3-cocycle $\Phi_{\lambda}$ and 3-cycle $\omega$, respectively, is trivial, if the following corresponding identity in:
$$\sigma=
\gbeg{2}{4}
\got{1}{F} \got{1}{F} \gnl
\gcu{1} \gcu{1} \gnl
\gu{1} \gnl
\gob{1}{B}
\gend \hspace{0,1cm}, \quad
\rho=
\gbeg{2}{4}
\got{1}{B} \gnl
\gcu{1} \gnl
\gu{1} \gu{1} \gnl
\gob{1}{F} \gob{1}{F}
\gend \hspace{0,1cm}, \quad
\Phi_{\lambda}=
\gbeg{3}{4}
\got{1}{} \gnl
\gu{1} \gu{1} \gu{1} \gnl
\gcl{1} \gcl{1} \gcl{1} \gnl
\gob{1}{F} \gob{1}{F} \gob{1}{B}
\gend \hspace{0,1cm}, \quad
\omega=
\gbeg{3}{4}
\got{1}{F} \got{1}{F} \got{1}{B} \gnl
\gcl{1} \gcl{1} \gcl{1} \gnl
\gcuf{1} \gcuf{1} \gcu{1} \gnl
\gob{1}{}
\gend
$$
holds. We have the next three obvious results, we state them for the sake of completeness. Having the definitions \equref{right F-mod} -- \equref{omega} in mind, 
in view of the identities \equref{2-cell eta_M}-\equref{2-cell eta_C}, \equref{2-cell Epsilon_M}-\equref{2-cell Epsilon_C}, 
\equref{1 lambda_M}, \equref{3 lambda_M}, \equref{1 lambda_C}, \equref{2 lambda_C} we get:

\begin{prop} \prlabel{trivial structures}
\begin{enumerate}
\item If $\eta_M$ is canonical, then $B$ is a trivial left $F$-comodule and $\Phi_{\lambda}$ is trivial. 
\item If $\Epsilon_M$ is canonical, then $B$ is a trivial right $F$-module and $\sigma$ is trivial. 
\item If $\eta_C$ is canonical, then $B$ is a trivial right $F$-comodule and $\rho$ is trivial. 
\item If $\Epsilon_C$ is canonical, then $B$ is a trivial left $F$-module and $\omega$ is trivial. 
\end{enumerate}
\end{prop}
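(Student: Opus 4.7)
The plan is to prove each of the four items by the same mechanism: unfold the canonicity condition via \equref{canonical 2-cells} into an explicit factorization of the given structure 2-cell through a pre-(co)unit of $F$, then substitute this decomposition into the appropriate biwreath axiom so that the (co)unit laws of $B$ collapse it to the claimed trivial form. Concretely, canonical $\eta_M$ means $\eta_M = \widetilde{\eta_M} \otimes \eta_B$ where $\widetilde{\eta_M} = (\id_F \otimes \Epsilon_B) \circ \eta_M$ is the monadic pre-unit of $F$; canonical $\Epsilon_M$ means $\Epsilon_M = \eta_B \circ \widetilde{\Epsilon_M}$ for $\widetilde{\Epsilon_M} = \Epsilon_B \circ \Epsilon_M$; and dually for $\eta_C$ and $\Epsilon_C$ using the comonadic pre-(co)units of $F$ from \ssref{(co)units B}.

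For part (2), substituting $\Epsilon_M = \eta_B \circ \widetilde{\Epsilon_M}$ into \equref{2-cell Epsilon_M} and applying the $B$-unit laws $\mu_B \circ (\eta_B \otimes \id_B) = \id_B = \mu_B \circ (\id_B \otimes \eta_B)$ collapses the two sides to $(\widetilde{\Epsilon_M} \otimes \id_B) \circ \psi$ and $\id_B \otimes \widetilde{\Epsilon_M}$ respectively, which by \equref{right F-mod} is exactly the triviality of the right $F$-action on $B$. Substituting the same decomposition into \equref{1 lambda_M} and using $\mu_B \circ (\eta_B \otimes \eta_B) = \eta_B$, the left-hand side becomes $\eta_B \circ (\widetilde{\Epsilon_M} \otimes \widetilde{\Epsilon_M})$ while the right-hand side simplifies to $(\widetilde{\Epsilon_M} \otimes \id_B) \circ \mu_M = \sigma$ by \equref{sigma}, yielding the triviality of $\sigma$. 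Part (1) proceeds identically with \equref{2-cell eta_M} (giving triviality of the left $F$-coaction on $B$ from \equref{left F-comod}) and \equref{2 lambda_M}, where the normalization $\psi \circ (\eta_B \otimes \widetilde{\eta_M}) = \widetilde{\eta_M} \otimes \eta_B$ coming from the second identity of \equref{psi laws for B} turns its right-hand side into $\widetilde{\eta_M} \otimes \widetilde{\eta_M} \otimes \eta_B$, matching the definition \equref{fi-lambda} of $\Phi_\lambda$ after recognizing the left-hand side as $\Phi_\lambda$ via the $B$-unit law. Parts (3) and (4) are the comonadic mirrors of (1) and (2), using \equref{2-cell eta_C} together with \equref{2 lambda_C} and \equref{2-cell Epsilon_C} together with \equref{1 lambda_C}, respectively, with $B$-counit laws replacing $B$-unit laws throughout.

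No genuine obstacle is expected: once the four canonical factorizations are correctly recorded, each of the eight assertions reduces to a short diagrammatic simplification using only the (co)monoid axioms of $B$ and a single biwreath axiom. The only point requiring attention is matching the pre-(co)unit of $F$ appearing in each of \equref{right F-mod}--\equref{omega} with the one produced by canonicity, so that both sides of the simplified axiom genuinely correspond to the defined (co)action and the defined (co)cycle as listed in the proposition.
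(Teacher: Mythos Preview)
Your proof is correct and follows essentially the same approach as the paper, which simply states that the result is immediate from the definitions \equref{right F-mod}--\equref{omega} together with the 2-cell conditions \equref{2-cell eta_M}--\equref{2-cell Epsilon_C} and the bimonad compatibilities \equref{1 lambda_M}, \equref{3 lambda_M}, \equref{1 lambda_C}, \equref{2 lambda_C}. Your detailed unfolding of the canonicity condition and subsequent collapse via the (co)unit laws of $B$ is exactly what the paper leaves implicit; note that your use of \equref{2 lambda_M} for the triviality of $\Phi_\lambda$ (rather than \equref{3 lambda_M} as the paper lists) is the correct choice, being the dual of \equref{1 lambda_C} used for $\omega$.
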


\begin{prop} \prlabel{can implies assoc}
\begin{enumerate}
\item If the 2-cells $\mu_M, \eta_M$ are canonical, then $(F, \widetilde{\mu_M}, \widetilde{\eta_M})$ is a monad in $\K$.
\item If the 2-cells $\mu_C, \eta_C$ are canonical, then $(F, \widetilde{\mu_C}, \widetilde{\eta_C})$ is a monad in $\K$.
\item If the 2-cells $\Delta_M, \Epsilon_M$ are canonical, then $(F, \widetilde{\Delta_M}, \widetilde{\Epsilon_M})$ is a comonad in $\K$.
\item If the 2-cells $\Delta_C, \Epsilon_C$ are canonical, then $(F, \widetilde{\Delta_C}, \widetilde{\Epsilon_C})$ is a comonad in $\K$.
\item If the 2-cells $\mu_M, \eta_M, \Delta_M, \Epsilon_M, \lambda_M$ are canonical, then $(F, \widetilde{\mu_M}, \widetilde{\eta_M},
      \widetilde{\Delta_M}, \widetilde{\Epsilon_M}, \widetilde{\lambda_M})$ is a bimonad in $\K$.
\item If the 2-cells $\mu_C, \eta_C, \Delta_C, \Epsilon_C, \lambda_C$ are canonical, then $(F, \widetilde{\mu_C}, \widetilde{\eta_C},
\widetilde{\Delta_C}, \widetilde{\Epsilon_C}, \widetilde{\lambda_C})$ is a bimonad in $\K$.
\end{enumerate}
\end{prop}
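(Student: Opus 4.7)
\proof
The plan for all six parts is uniform: substitute the canonical forms of the structure 2-cells into the corresponding biwreath axioms, exploit the unit--counit compatibility of the distributive laws $\psi$, $\phi$ together with the bimonad axioms of $B$ to decouple the ambient $B$-strands, and then project down to a $\K$-level identity by post-composing with $\Id_F \times \Epsilon_B$ (or pre-composing with $\Id_F \times \eta_B$ in the dual cases).

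For part (1), I would write $\mu_M = (\Id_F \times \eta_B)\,\widetilde{\mu_M}$ and $\eta_M = (\Id_F \times \eta_B)\,\widetilde{\eta_M}$ and substitute into \equref{monad law mu_M}. In the left-hand side each $\mu_M$ outputs an $\eta_B$, and the final $\mu_B$ merges them by the $B$-monad unit law, so the whole LHS collapses to $\widetilde{\mu_M}(\Id_F \times \widetilde{\mu_M}) \times \eta_B$. The RHS has an extra $\psi$ acting on $\eta_B \times \Id_F$, and the second identity in \equref{psi laws for B} rewrites this as $\Id_F \times \eta_B$; after the same clean-up the RHS reduces to $\widetilde{\mu_M}(\widetilde{\mu_M} \times \Id_F) \times \eta_B$. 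Post-composing with $\Id_F \times \Epsilon_B$ and invoking $\Epsilon_B \eta_B = \Id$ from \equref{epsilon-eta B} then extracts the associativity of $\widetilde{\mu_M}$ in $\K$. The unit axioms for $\widetilde{\eta_M}$ follow from \equref{monad law eta_M} by the very same procedure.

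Parts (2)--(4) are dual-analogous. For part (2) I would use the counit axiom of $\phi$ (the second relation in \equref{phi laws for B}) to kill the $\Epsilon_B$'s produced by canonical $\mu_C$, together with $\Epsilon_B \mu_B = \Epsilon_B \times \Epsilon_B$, in order to reduce \equref{monad law mu_C} and \equref{monad law eta_C} to the monad laws for $\widetilde{\mu_C}$ and $\widetilde{\eta_C}$ in $\K$; pre-composing with $\Id_F \times \eta_B$ extracts the required identities. Parts (3) and (4) are the co-opposite versions of (1) and (2) applied to the comonad laws \equref{comonad law Delta_M}--\equref{comonad law Epsilon_M} and \equref{comonad law Delta_C}--\equref{comonad law Epsilon_C}, invoking \equref{eta-Delta B} in place of the $B$-monad unit law.

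For parts (5)--(6), the underlying monad and comonad structures are provided by (1)--(4), so what remains is to verify the three groups of bimonad axioms of \deref{bimonad} for the canonical restrictions. The three conditions in (a) translate directly from \equref{1 lambda_M}--\equref{3 lambda_M} (respectively \equref{1 lambda_C}--\equref{3 lambda_C}); the four distributive-law conditions in (b) for $\widetilde{\lambda_M}$ translate from \equref{4 lambda_M}--\equref{7 lambda_M} (respectively for $\widetilde{\lambda_C}$ from \equref{4 lambda_C}--\equref{7 lambda_C}); and the bialgebra compatibility \equref{bialg-lambda} is precisely the canonical restriction of \equref{8 lambda_M} (respectively \equref{8 lambda_C}). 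The main technical hurdle will be \equref{4 lambda_M}, where three canonical $\mu_M$'s, one $\psi$ and two $\lambda_M$'s all coexist; however the single $\psi$ on the RHS acts on $\eta_B \times F$ and the two $\mu_B$'s absorb exactly the extra $\eta_B$'s produced by the canonical forms, so both sides should collapse to the distributive law axiom for $\widetilde{\lambda_M}$ with $\widetilde{\mu_M}$ tensored with $\eta_B$, and a final post-composition with $\Id_F \times \Epsilon_B$ yields the identity in $\K$.
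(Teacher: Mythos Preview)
Your approach is correct and essentially the same as the paper's. For parts (1)--(4) the paper simply cites the already-derived identities \equref{weak assoc. mu_M}--\equref{weak counit Epsilon_C}, which are obtained exactly as you describe (substitute into the biwreath laws and apply $\Epsilon_B$ or $\eta_B$); for parts (5)--(6) the paper invokes that $\eta_B$ is a monomorphism and $\Epsilon_B$ an epimorphism (both because of \equref{epsilon-eta B}) to cancel the ambient $B$-strand, which is the same as your post-composition with $\Epsilon_B$ since that is precisely the retraction witnessing the split mono.
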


\begin{proof}
The first 4 parts follow from the identities \equref{weak assoc. mu_M} to \equref{weak counit Epsilon_C}. The part (5) follows from the identities \equref{1 lambda_M} to
\equref{8 lambda_M} using that $\eta_B$ is a monomorphism, {\em i.e.} left cancelable. The part (6) follows from the identities \equref{1 lambda_C} to
\equref{8 lambda_C} using that $\Epsilon_B$ is an epimorphism, {\em i.e.} right cancelable. That $\eta_B$ is a monomorphism and $\Epsilon_B$ an epimorphism it follows
from \equref{epsilon-eta B}, that is $\Epsilon_B\comp\eta_B=\Id_{\id_{\A}}$.
\qed\end{proof}

The following result follows from the identities  \equref{weak action} - \equref{weak coaction counity}: 

\begin{prop}
\begin{enumerate}
\item If a 2-cocycle $\sigma$ is trivial and $\mu_M, \eta_M$ are canonical, then $B$ is a right $F$-module.
\item If a 3-cocycle $\Phi_{\lambda}$ is trivial and $\Delta_M, \Epsilon_M$ are canonical, then $B$ is a left $F$-comodule.
\item If a 3-cycle $\omega$ is trivial and $\mu_C, \eta_C$ are canonical, $B$ is a left $F$-module.
\item If a 2-cycle $\rho$ is trivial and $\Delta_C, \Epsilon_C$ are canonical, then $B$ is a right $F$-comodule.
\end{enumerate}
\end{prop}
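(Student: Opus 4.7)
The plan is uniform across the four items. In each case, \prref{can implies assoc} fixes a genuine (co)monad structure on $F$ in $\K$ via the relevant canonical restrictions $\widetilde{\mu_M},\widetilde{\eta_M}$ (respectively $\widetilde{\Delta_M},\widetilde{\Epsilon_M}$; $\widetilde{\mu_C},\widetilde{\eta_C}$; $\widetilde{\Delta_C},\widetilde{\Epsilon_C}$), with respect to which $B$ must now become a (co)module. The (co)action on $B$ in question has already been singled out in \equref{right F-mod}--\equref{left F-mod}. The (co)module laws will be extracted from the appropriate weak/twisted identity among \equref{weak action}--\equref{weak coaction counity} by the following mechanism: triviality of the (co)cycle kills the twist, while canonicity of the corresponding structure 2-cell removes the auxiliary $B$-leg, leaving precisely the (co)associativity and (co)unit axioms.

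For part (1), I would start from \equref{weak action}. Triviality of $\sigma$ substitutes $\sigma=(\Epsilon_F\otimes\Epsilon_F)\cdot\eta_B$ on the left-hand side; the two $\Epsilon_F$'s turn each of the two $\psi$-steps into one application of the right $F$-action on $B$ from \equref{right F-mod}, so the left-hand side reduces to $(b\cdot f)\cdot g$, while the spurious $\eta_B$ meets the surviving $B$-leg at $\mu_B$ and is swallowed by the unit axiom of $B$. On the right-hand side, canonicity \equref{canonical 2-cells} rewrites $\mu_M=\widetilde{\mu_M}\otimes\eta_B$; the accompanying $\eta_B$ is again absorbed at $\mu_B$, and what remains is $b\cdot\widetilde{\mu_M}(f,g)$. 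Right-associativity of the action follows. The unit axiom $b\cdot\widetilde{\eta_M}=b$ is read off \equref{weak action unity} in the same style from canonicity of $\eta_M$.

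Parts (2)--(4) are entirely dual. Part (2) uses \equref{quasi coaction} and \equref{quasi coaction counity} with $\Phi_\lambda$ trivial, combined with \prref{can implies assoc}(3) applied to the canonical pair $\Delta_M,\Epsilon_M$. Part (3) uses \equref{dual quasi action} and \equref{dual quasi action unity} with $\omega$ trivial and \prref{can implies assoc}(2) applied to $\mu_C,\eta_C$. Part (4) uses \equref{weak coaction} and \equref{weak coaction counity} with $\rho$ trivial and \prref{can implies assoc}(4) applied to $\Delta_C,\Epsilon_C$. In each of these, the trivial (co)cycle outputs a superfluous $\eta_B$ or $\Epsilon_B$ which is absorbed by $\Delta_B$ or $\mu_B$ via the bimonad (co)unit axiom, while canonicity exposes the canonical (co)multiplication of $F$, matching the required (co)associativity or (co)unit law.

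The only delicate point in this plan is the bookkeeping: one must verify that the $\eta_B$ (respectively $\Epsilon_B$) produced by the trivial (co)cycle pairs correctly with the $\eta_B$ (respectively $\Epsilon_B$) produced by canonicity of the structure 2-cell, so that the nested $B$-(co)multiplications collapse to identities via \equref{epsilon-eta B} and the (co)unit axioms for the bimonad $B$. No deeper obstacle is expected.
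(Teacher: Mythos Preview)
Your proposal is correct and follows essentially the same approach as the paper, which simply states that the result follows from the identities \equref{weak action}--\equref{weak coaction counity}. Your write-up spells out the mechanism behind that one-line reference: triviality of the (co)cycle collapses one side of the relevant identity to iterated (co)actions, canonicity of the structure 2-cells strips the spurious $B$-leg on the other side via the (co)unit law of $B$, and \prref{can implies assoc} guarantees the underlying (co)monad structure on $F$ needed for the statement to make sense.
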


\subsection{Combining the above to obtain some new identities} \sslabel{secondary}

\textbf{Counit and $\sigma$.} 
When we apply $\Epsilon_B$ to the 2-cocycle condition \equref{2-cocycle condition}, we get: 
$$
\gbeg{2}{5}
\got{1}{F} \got{1}{\hspace{-0,34cm}F} \got{1}{\hspace{-0,34cm}F} \gnl
\gcl{1} \hspace{-0,22cm} \gmu \gnl
\gvac{1} \hspace{-0,22cm} \glmpt \gnot{\hspace{-0,34cm}\sigma} \grmptb \gnl
\gvac{2} \gcu{1} \gnl
\gob{3}{}
\gend=
\gbeg{2}{5}
\got{1}{F} \got{1}{F} \got{1}{F} \gnl
\glmptb \gnot{\hspace{-0,34cm}\mu_M} \grmptb \gcl{1} \gnl
\gcl{1} \glm \gnl
\glmpt \gnot{\sigma} \gcmp \grmptb \gnl
\gvac{2} \gcu{1} \gnl
\gend
\qquad
\stackrel{FF
\gbeg{2}{2}
\gu{1} \gnl
\gob{1}{F}
\gend
}{\Rightarrow}
\gbeg{2}{6}
\got{1}{F} \got{1}{\hspace{-0,34cm}F}  \gnl
\gcl{2} \gcn{1}{1}{0}{0} \hspace{-0,22cm} \gu{1} \gnl
\gvac{1} \gmu \gnl
\gvac{1} \hspace{-0,22cm} \glmpt \gnot{\hspace{-0,34cm}\sigma} \grmptb \gnl
\gvac{2} \gcu{1} \gnl
\gob{3}{}
\gend=
\gbeg{2}{5}
\got{1}{F} \got{1}{F} \gnl
\glmptb \gnot{\hspace{-0,34cm}\mu_M} \grmptb \gu{1} \gnl
\gcl{1} \glm \gnl
\glmpt \gnot{\sigma} \gcmp \grmptb \gnl
\gvac{2} \gcu{1} \gnl
\gend
$$
which by \equref{weak unity eta_M} and \equref{mod alg unity} is further equivalent to: 
\begin{equation} \eqlabel{epsilon to sigma}
\gbeg{2}{4}
\got{1}{F} \got{1}{F} \gnl
\glmpt \gnot{\hspace{-0,34cm}\sigma} \grmptb \gnl
\gvac{1} \gcu{1} \gnl
\gob{1}{}
\gend=
\gbeg{2}{7}
\got{1}{F} \got{1}{F} \gnl
\glmptb \gnot{\hspace{-0,34cm}\mu_M} \grmptb \gnl
\gcl{1} \gcu{1} \gnl
\gcl{1} \gu{1} \gnl
\glmpt \gnot{\hspace{-0,34cm}\sigma} \grmptb \gnl
\gvac{1} \gcu{1} \gnl
\gend\stackrel{*}{\stackrel{\equref{normalized 2-cocycle}}{=}}
\gbeg{2}{5}
\got{1}{F} \got{1}{F} \gnl
\gmu \gnl
\gvac{1} \hspace{-0,22cm} \gcu{1} \gnl
\gvac{1} \gu{1} \gnl
\gvac{1} \gcu{1} \gnl
\gend\stackrel{\equref{3 lambda_M implies}}{\stackrel{\equref{1 lambda_M implies}}{=}   }
\gbeg{2}{5}
\got{1}{F} \got{1}{F} \gnl
\gcl{1} \gcl{1} \gnl
\gcu{1} \gcu{1} \gnl
\gend
\end{equation}
where at the place * the equality holds {\em if $\eta_M$ is canonical}. 

\textbf{Unit and $\rho$.} 
Dually to the above, from the 2-cycle condition \equref{2-cycle ro} and if $\Epsilon_C$ is canonical one gets: 
\begin{equation} \eqlabel{unit to ro}
\gbeg{2}{4}
\got{1}{} \gnl
\gvac{1} \gu{1} \gnl
\glmpb \gnot{\hspace{-0,34cm}\rho} \grmptb \gnl
\gob{1}{F} \gob{1}{F} \gnl
\gend=
\gbeg{2}{4}
\got{1}{} \gnl
\guf{1} \guf{1} \gnl
\gcl{1} \gcl{1} \gnl
\gob{1}{F} \gob{1}{F} \gnl
\gend
\end{equation}

\textbf{Counit and $\Phi_{\lambda}$.} 
\begin{equation} \eqlabel{fi-lambda normalizado}
\gbeg{3}{3}
\gcn{1}{1}{2}{1} \gelt{\s\Phi_{\lambda}} \gcn{1}{1}{0}{1} \gnl  %
\gcl{1} \gcl{1} \gcu{1} \gnl
\gob{1}{F} \gob{1}{F} \gnl
\gend=
\gbeg{3}{4}
\gvac{1} \gu{1} \gnl
\glmpb \gnot{\Delta_M} \gcmptb \grmpb \gnl
\gcl{1} \gcl{1} \gcu{1} \gnl
\gob{1}{F} \gob{1}{F} \gnl
\gend=
\gbeg{1}{4}
\got{2}{}  \gnl
 \gu{1} \gnl
\hspace{-0,34cm} \gcmu \gnl
\gob{1}{F} \gob{1}{F} \gnl
\gend\stackrel{\equref{2 lambda_M implies}}{=}
\gbeg{2}{4}
\got{1}{} \gnl
\gu{1} \gu{1} \gnl
\gcl{1} \gcl{1} \gnl
\gob{1}{F} \gob{1}{F}
\gend
\end{equation}
\textbf{Unit and $\omega$.} 
Dually: 
\begin{equation} \eqlabel{omega normalizado}
\gbeg{3}{3}
\got{1}{F} \got{1}{F} \gnl
\gcl{1} \gcl{1} \gu{1} \gnl
\gcn{1}{1}{1}{2} \gelt{\omega} \gcn{1}{1}{1}{0} \gnl   
\gend=
\gbeg{3}{5}
\got{1}{F} \got{1}{F} \gnl
\gcl{1} \gcl{1} \gu{1} \gnl
\glmpt \gnot{\mu_C} \gcmptb \grmpt \gnl
\gvac{1} \gcuf{1} \gnl
\gend=
\gbeg{2}{4}
\got{1}{F} \got{1}{F} \gnl
\gcl{1} \gcl{1} \gnl
\gcuf{1} \gcuf{1} \gnl
\gob{1}{}
\gend
\end{equation}

\textbf{(Co)module-(co)unit relations for $B$.} 
From the $F$-(co)module (co)monad identities \equref{mod alg}-\equref{comod coalg counity} for $B$ we have four relations between the (co)module structure and the (co)unit of $B$. 
The resting four of such relations we get like follows: 
\begin{equation} \eqlabel{left F-comod. coalg.}
\gbeg{2}{4}
\got{3}{B} \gnl
\grmo \gvac{1} \gcl{1} \gnl
\gcl{1} \gcu{1} \gnl
\gob{1}{F}
\gend\stackrel{\equref{left F-comod}}{=}
\gbeg{2}{5}
\got{1}{B} \gnl
\gcl{1} \gu{1} \gnl
\glmptb \gnot{\hspace{-0,34cm}\psi} \grmptb \gnl
\gcl{1} \gcu{1} \gnl
\gob{1}{F}  
\gend\stackrel{\equref{left B-mod}}{=}
\gbeg{2}{4}
\got{1}{B}  \gnl
\gcl{1} \gu{1} \gnl
\glm \gnl
\gob{3}{F} 
\gend\stackrel{\equref{mod alg unity}}{=}
\gbeg{2}{6}
\got{1}{B} \gnl
\gcl{1} \gnl
\gcu{1} \gnl
\gu{1} \gnl
\gcl{1} \gnl
\gob{1}{F} 
\gend
\end{equation}

\begin{equation} \eqlabel{right F-mod. coalg.}
\gbeg{2}{4}
\got{1}{B} \got{1}{F} \gnl
\grmo \gcl{1} \gnl 
\gcu{1} \gnl
\gob{1}{}
\gend\stackrel{\equref{right F-mod}}{=}
\gbeg{2}{5}
\got{1}{B} \got{1}{F} \gnl
\gcl{1} \gcl{1} \gnl
\glmptb \gnot{\hspace{-0,34cm}\psi} \grmptb \gnl
\gcu{1} \gcu{1} \gnl
\gob{3}{} 
\gend\stackrel{\equref{left B-mod}}{=}
\gbeg{2}{4}
\got{1}{B} \got{1}{F} \gnl
\glm \gnl
\gvac{1} \gcu{1} \gnl
\gob{1}{} 
\gend\stackrel{\equref{mod coalg counity}}{=}
\gbeg{2}{4}
\got{1}{B} \got{1}{F} \gnl
\gcl{1} \gcl{1} \gnl
\gcu{1} \gcu{1} \gnl
\gob{1}{} 
\gend
\end{equation}

\begin{equation} \eqlabel{right F-comod. alg.}
\gbeg{2}{4}
\got{1}{} \gnl
\gu{1} \gnl
\gcl{1} \hspace{-0,42cm} \glmf \gnl  
\gvac{1} \gob{1}{B} \gob{1}{F}
\gend\stackrel{\equref{right F-comod}}{=}
\gbeg{2}{5}
\got{3}{} \gnl
\guf{1} \gu{1} \gnl
\glmptb \gnot{\hspace{-0,34cm}\phi} \grmptb \gnl
\gcl{1} \gcl{1} \gnl
\gob{1}{B} \gob{1}{F} 
\gend\stackrel{\equref{left B-comod}}{=}
\gbeg{2}{4}
\got{1}{}  \gnl
\gvac{1} \guf{1} \gnl
\glcm \gnl
\gob{1}{B} \gob{1}{F} 
\gend\stackrel{\equref{comod alg unity}}{=}
\gbeg{2}{4}
\got{1}{} \gnl
\gu{1} \guf{1} \gnl
\gcl{1} \gcl{1} \gnl
\gob{1}{B} \gob{1}{F} 
\gend
\end{equation}

\begin{equation} \eqlabel{left F-mod. alg.}
\gbeg{2}{4}
\got{1}{F} \gnl
\gcl{1} \gu{1} \gnl
\glmf \gcn{1}{1}{-1}{-1} \gnl
\gob{3}{B}
\gend\stackrel{\equref{left F-mod}}{=}
\gbeg{2}{5}
\got{1}{F} \gnl
\gcl{1} \gu{1} \gnl
\glmptb \gnot{\hspace{-0,34cm}\phi} \grmptb \gnl
\gcl{1} \gcuf{1} \gnl
\gob{1}{B}  
\gend\stackrel{\equref{left B-comod}}{=}
\gbeg{2}{4}
\got{3}{F}  \gnl
\glcm \gnl
\gcl{1} \gcuf{1} \gnl
\gob{1}{B} 
\gend\stackrel{\equref{comod coalg counity}}{=}
\gbeg{2}{4}
\got{1}{F} \gnl
\gcuf{1} \gnl
\gu{1} \gnl
\gob{1}{B} 
\gend
\end{equation}

\textbf{Expressions for $\mu$'s and $\Delta$'s.} 
The $\Delta\x\eta$ compatibility conditions \equref{8 lambda_M} and \equref{8 lambda_C} yield expressions for some structures when some other structures are canonical. We will 
differentiate between the following cases:
\begin{enumerate}
\item \underline{If $\Delta_M, \eta_M$ are canonical.} For $\Delta_M$ canonical, \equref{8 lambda_M} becomes:
$$
\gbeg{2}{4}
\got{1}{F} \got{1}{F} \gnl
\glmptb \gnot{\hspace{-0,34cm}\mu_M} \grmptb \gnl
\hspace{-0,22cm} \gcmu \gcn{1}{1}{0}{0} \gnl
\gob{1}{F} \gob{1}{F} \gob{1}{\hspace{-0,34cm}B} \gnl
\gend=
\gbeg{5}{7}
\gvac{1} \got{1}{F} \got{2}{F} \gnl
\gvac{1} \gcl{1} \gcmu \gnl
\glmpb \gnot{\lambda_M} \gcmptb \grmptb \gcl{1} \gnl
\gcl{1} \gcl{1} \glmptb \gnot{\hspace{-0,34cm}\psi} \grmptb \gnl
\gcl{2} \glmptb \gnot{\hspace{-0,34cm}\mu_M} \grmptb \gcl{1} \gnl
\gcl{1} \gcl{1} \gmu \gnl
\gob{1}{F} \gob{1}{F} \gob{2}{B} \gnl
\gend 
\qquad
\stackrel{F
\gbeg{1}{2}
\got{1}{F} \gnl
\gcu{1} \gnl
\gob{1}{} \gnl
\gend
B}{\Rightarrow}
\mu_M=
\gbeg{5}{7}
\gvac{1} \got{1}{F} \got{2}{F} \gnl
\gvac{1} \gcl{1} \gcmu \gnl
\glmpb \gnot{\lambda_M} \gcmptb \grmptb \gcl{1} \gnl
\gcl{1} \gcl{1} \glmptb \gnot{\hspace{-0,34cm}\psi} \grmptb \gnl
\gcl{2} \glmptb \gnot{\hspace{-0,34cm}\sigma} \grmptb \gcl{1} \gnl
\gcl{1} \gcl{1} \gmu \gnl
\gob{1}{F} \gob{1}{F} \gob{2}{B} \gnl
\gend 
$$
We apply $\Epsilon_B$ to this and if $\eta_M$ is canonical, by \equref{epsilon to sigma} this is further equal to:
$$
\gbeg{2}{3}
\got{1}{F} \got{1}{F} \gnl
\gmu \gnl
\gob{2}{F}
\gend=
\gbeg{4}{6}
\gvac{1} \got{1}{F} \got{2}{F} \gnl
\gvac{1} \gcl{1} \gcmu \gnl
\glmpb \gnot{\lambda_M} \gcmptb \grmptb \gcl{1} \gnl
\gcl{1} \gcu{1} \glmptb \gnot{\hspace{-0,34cm}\psi} \grmptb \gnl
\gcl{1} \gvac{1} \gcu{1} \gcu{1} \gnl
\gob{1}{F} \gnl
\gend\stackrel{\equref{left B-mod}}{=}
\gbeg{4}{6}
\gvac{1} \got{1}{F} \got{2}{F} \gnl
\gvac{1} \gcl{1} \gcmu \gnl
\glmpb \gnot{\lambda_M} \gcmptb \grmptb \gcl{1} \gnl
\gcl{1} \gcu{1} \glm \gnl
\gcl{1} \gvac{2} \gcu{1} \gnl
\gob{1}{F} \gnl
\gend\stackrel{\equref{mod alg unity}}{=}
\gbeg{4}{5}
\gvac{1} \got{1}{F} \got{2}{F} \gnl
\gvac{1} \gcl{1} \gcmu \gnl
\glmpb \gnot{\lambda_M} \gcmptb \grmptb \gcu{1} \gnl
\gcl{1} \gcu{1} \gcu{1} \gnl
\gob{1}{F} \gnl
\gend\stackrel{\equref{quasi counity Epsilon_M}}{=}
\gbeg{3}{4}
\got{1}{F} \got{1}{F} \gnl
\glmptb \gnot{\lambda_M} \gcmptb \grmpb \gnl
\gcl{1} \gcu{1} \gcu{1} \gnl
\gob{1}{F} \gnl
\gend
$$
\item \underline{If $\Delta_M, \lambda_M$ are canonical.} In this case \equref{8 lambda_M} equals:
\begin{equation} \eqlabel{mu_m con sigma}
\gbeg{2}{4}
\got{1}{F} \got{1}{F} \gnl
\glmptb \gnot{\hspace{-0,34cm}\mu_M} \grmptb \gnl
\hspace{-0,22cm} \gcmu \gcn{1}{1}{0}{0} \gnl
\gob{1}{F} \gob{1}{F} \gob{1}{\hspace{-0,34cm}B} \gnl
\gend=
\gbeg{3}{5}
\got{1}{F} \got{2}{F} \gnl
\gcl{1} \gcmu \gnl
\glmptb \gnot{\hspace{-0,34cm}\widetilde{\lambda_M}} \grmptb \gcl{1} \gnl
\gcl{1} \glmptb \gnot{\hspace{-0,34cm}\mu_M} \grmptb \gnl
\gob{1}{F} \gob{1}{F} \gob{1}{B} \gnl
\gend\quad
\stackrel{F
\gbeg{1}{2}
\got{1}{F} \gnl
\gcu{1} \gnl
\gob{1}{} \gnl
\gend
B}{\stackrel{\equref{quasi counity Epsilon_M}}{\Rightarrow}} \quad
\mu_M=
\gbeg{3}{5}
\got{1}{F} \got{2}{F} \gnl
\gcl{1} \gcmu \gnl
\glmptb \gnot{\hspace{-0,34cm}\widetilde{\lambda_M}} \grmptb \gcl{1} \gnl
\gcl{1} \glmpt \gnot{\hspace{-0,34cm}\sigma} \grmptb \gnl
\gob{1}{F} \gob{3}{B} \gnl
\gend
\end{equation}
\item \underline{If $\mu_M, \lambda_M$ are canonical.} In this case \equref{8 lambda_M} equals:
\begin{equation} \eqlabel{Delta_m con Fi-lambda}
\gbeg{3}{5}
\gvac{2} \got{1}{\hspace{-0,4cm}F} \got{1}{\hspace{-0,4cm}F} \gnl
\gvac{2} \hspace{-0,34cm} \gmu \gnl
\gvac{1} \hspace{-0,22cm} \glmpb \gnot{\Delta_M} \gcmpb \grmptb \gnl
\gvac{1} \gcl{1} \gcl{1} \gcl{1} \gnl
\gvac{1} \gob{1}{F} \gob{1}{F} \gob{1}{B} \gnl
\gend=
\gbeg{4}{5}
\got{1}{F} \got{1}{F} \gnl
\gcl{1} \glmptb \gnot{\Delta_M} \gcmpb \grmpb \gnl
\glmptb \gnot{\hspace{-0,34cm}\widetilde{\lambda_M}} \grmptb \gcl{1} \gcl{2} \gnl
\gcl{1} \gmu \gnl
\gob{1}{F} \gob{2}{F} \gob{1}{B} \gnl
\gend \quad
\stackrel{F
\gbeg{1}{2}
\gu{1} \gnl
\gob{1}{F} \gnl
\gend}{\stackrel{\equref{weak unity eta_M}}{\Rightarrow}} \quad
\Delta_M=
\gbeg{4}{6}
\got{1}{F} \gnl
\gcl{1} \gu{1} \gnl
\gcl{1} \glmptb \gnot{\Delta_M} \gcmpb \grmpb \gnl
\glmptb \gnot{\hspace{-0,34cm}\widetilde{\lambda_M}} \grmptb \gcl{1} \gcl{2} \gnl
\gcl{1} \gmu \gnl
\gob{1}{F} \gob{2}{F} \gob{1}{B} \gnl
\gend
\end{equation}
\item \underline{If $\mu_C, \Epsilon_C$ are canonical.} 
Dually to the part 1), because of \equref{unit to ro}, we have:
$$
\gbeg{2}{4}
\got{2}{F} \gnl
\gcn{1}{1}{2}{2} \gnl
\gcmuf \gnl
\gob{1}{F} \gob{1}{F} \gnl
\gend=
\gbeg{3}{4}
\got{1}{F} \gnl
\gcl{1} \guf{1} \guf{1} \gnl
\glmpb \gnot{\lambda_C} \gcmptb \grmpt \gnl
\gob{1}{F} \gob{1}{F} \gnl
\gend
$$
\item \underline{If $\mu_C, \lambda_C$ are canonical.} 
Dually to the part 2), it is:
\begin{equation} \eqlabel{Delta_C con ro}
\Delta_C=
\gbeg{3}{5}
\got{1}{F} \got{3}{B} \gnl
\gcl{1} \glmpb \gnot{\hspace{-0,34cm}\rho} \grmptb \gnl
\glmptb \gnot{\hspace{-0,34cm}\widetilde{\lambda_C}} \grmptb \gcl{1} \gnl
\gcl{1} \gmuf \gnl \gnl
\gob{1}{F} \gob{2}{F} \gnl
\gend
\end{equation}
\item \underline{If $\Delta_C, \lambda_C$ are canonical.} 
Dually to the part 3), one has:
\begin{equation} \eqlabel{mu_C con omega}
\mu_C=
\gbeg{4}{7}
\got{1}{F} \got{2}{F} \got{1}{B} \gnl
\gcl{1} \gcn{1}{1}{2}{2} \gvac{1} \gcl{3} \gnl
\gcl{1} \gcmuf{1} \gnl
\glmptb \gnot{\hspace{-0,34cm}\widetilde{\lambda_M}} \grmptb \gcl{1} \gnl
\gcl{1} \glmptb \gnot{\mu_C} \gcmpt \grmpt \gnl
\gcl{1} \gcuf{1} \gnl
\gob{1}{F} \gnl
\gend
\end{equation}
\end{enumerate}

\section{Examples: when $\K$ is given by a braided monoidal category} \selabel{example1}

It is a well-known fact that a 2-category with a single 0-cell is a monoidal category. On the other hand, braidings in braided monoidal categories, 
because of their hexagon axioms and naturality, are distributive laws (in the sense of \equref{psi laws for B} and \equref{phi laws for B}). 
We fix a braided monoidal category $\C$ and set $\K=\hat\C$, the 2-category 
arising from $\C$, that is: $\K$ has a single 0-cell and its 1- and 2-cells are given by the objects and the morphisms of $\C$, respectively. 
In this and the following section we will study different cases of biwreaths and biwreath-like objects in $\hat\C$ giving 2-cells $\psi, \phi$ and $\lambda$'s. It will turn out that 
the latter 2-cells (and the corresponding biwreaths) have their meaning in a specific pre-braided monoidal category $\D$ built upon $\C$.  Namely, these will be the categories 
of Yetter-Drinfel'd modules over a certain bialgebra in $\C$, studied in \cite{Besp, Femic1}. 
The braiding in $\C$ and its inverse we will denote by 
$
\gbeg{2}{1}
\gbr \gnl
\gend
$ \hspace{0,1cm} and 
$
\gbeg{2}{1}
\gibr \gnl
\gend
$, \hspace{0,1cm} respectively. 

\begin{rem} \rmlabel{twist sides}
Strictly speaking, the identities involving compositions of 1-cells and horizontal compositions of 2-cells in $\K$ should be written in the reversed order when 
interpreted in $\C$. This is because the mentioned compositions correspond to tensor products in $\C$, and compositions are read from the right to the left, 
while the corresponding tensor products in $\C$ are read from the left to the right. In order to avoid complications we will not make this reversal of identities 
when working in $\C$ throughout. 
\end{rem}

In all the examples that we will study throughout the paper we will assume the following: 
\begin{enumerate}
\item a bimonad $B$ in $\hat\C$ will be a bialgebra $B$ in $\C$; 
\item the 2-cells $\eta_M, \eta_C, \Epsilon_M$ and $\Epsilon_C$ are canonical. 
\end{enumerate}

The bialgebra axiom \equref{bialg} in $\C$, in view of \equref{bialg-lambda} 
yields that $\lambda_B:BB\to BB$ is given by \equref{lambda_B}:
\begin{center} 
\begin{tabular}{p{6cm}p{1cm}p{4.3cm}}
\begin{equation} \eqlabel{bialg}
\scalebox{0.86}{
\gbeg{3}{5}
\got{1}{B} \got{3}{B} \gnl
\gwmu{3} \gnl
\gvac{1} \gcl{1} \gnl
\gwcm{3} \gnl
\gob{1}{B}\gvac{1}\gob{1}{B}
\gend}=
\scalebox{0.86}{
\gbeg{4}{5}
\got{2}{B} \got{2}{B} \gnl
\gcmu \gcmu \gnl
\gcl{1} \gbr \gcl{1} \gnl
\gmu \gmu \gnl
\gvac{1} \hspace{-0,2cm} \gob{1}{B}\gvac{1}\gob{1}{B}
\gend}
\end{equation} & & \vspace{-0,6cm}
\begin{equation} \eqlabel{lambda_B}
\lambda_B=
\gbeg{4}{5}
\got{2}{B} \got{1}{B} \gnl
\gcmu \gcl{1} \gnl
\gcl{1} \gbr  \gnl
\gmu \gcl{1} \gnl
\gob{2}{B} \gob{1}{B.}
\gend
\end{equation}
\end{tabular}
\end{center}
A biwreath in $\hat\C$ is given by a bialgebra $B$, an object $F$, morphisms $\psi:BF\to FB, \phi: FB\to BF$ and 10 morphisms from \equref{structure 2-cells in K} in $\C$ 
satisfying the corresponding identities (axioms).

\subsection{Case one: $\D_1={}_B ^B\YD(\C)$ }  \sslabel{ex psi_1}


In this particular case we will assume additionally: 
\begin{enumerate}
\item the 2-cells $\lambda_M, \lambda_C$ are canonical and $\widetilde{\lambda_M}= \widetilde{\lambda_C}=:\lambda$;
\item the projections of the 2-cells in $\bEM(\K)$ to the monadic and the comonadic components coincide, that is: \vspace{-0,24cm}
\begin{equation} \eqlabel{canonical restrictions coincide} 
\gbeg{2}{4}
\got{1}{F} \gnl
\glmptb \gnot{\hspace{-0,34cm}\hat\rho_M} \grmpb \gnl
\gcl{1} \gcu{1} \gnl
\gob{1}{G} \gnl
\gend=
\gbeg{2}{4}
\got{1}{F} \gnl
\gcl{1} \gu{1} \gnl
\glmptb \gnot{\hspace{-0,34cm}\hat\rho_C} \grmp \gnl
\gob{1}{G} \gnl
\gend
\end{equation} 
with notations as in \equref{canonical 2-cells}; 
in this case we will also say that the canonical restrictions of the monadic and the comonadic components of the 2-cells in $\bEM(\K)$ coincide. 
\end{enumerate}

Let  $\D_1$ be the category ${}_B ^B\YD(\C)$ of left-left Yetter-Drinfel'd modules over a bialgebra $B$ in $\C$. It is a pre-braided category with 
the braiding 
$$d^1_{X,Y}=
\gbeg{3}{5}
\got{1}{} \got{1}{X} \got{1}{Y} \gnl
\glcm \gcl{1} \gnl
\gcl{1} \gbr \gnl
\glm \gcl{1} \gnl
\gob{1}{} \gob{1}{Y} \gob{1}{X} 
\gend
$$
for any $X,Y\in {}_B ^B\YD(\C)$. Observe that $d^1$ can be evaluated at any left $B$-module and left $B$-comodule. We clearly have that $B$ is a left module and comodule 
over itself, and by \prref{left B-structures of F} we know that $F$ has these structures, too.  The latter structures on $F$ are given by \equref{left B-mod} and \equref{left B-comod}. 
So we may set $\psi=d^1_{B,F}$ and $\phi=d^1_{F,B}$ to obtain: 
$$
\psi=
\gbeg{3}{5}
\got{2}{B} \got{1}{F} \gnl
\gcmu \gcl{1} \gnl
\gcl{1} \gbr \gnl
\glm \gcl{1} \gnl
\gob{1}{} \gob{1}{F} \gob{1}{B} 
\gend \hspace{2cm}
\phi= 
\gbeg{3}{5}
\got{1}{} \got{1}{F} \got{1}{B} \gnl
\glcm \gcl{1} \gnl
\gcl{1} \gbr \gnl
\gmu \gcl{1} \gnl
\gob{2}{B} \gob{1}{F} 
\gend
$$
once we make sure that the axioms \equref{psi laws for B} and \equref{phi laws for B} are fulfilled. 
If both $B$ and $F$ were Yetter-Drinfel'd modules these axioms would be satisfied by the braiding properties. It is known that $B$ is such a module with a regular action and an adjoint coaction 
(or the other way around) if $B$ is a Hopf algebra with a bijective antipode. So far we do not study Hopf wreaths, they will be studied elsewhere, so we need to check the axioms by hand. 
The compatibility with the (co)unity is clear, we prove the one with the multiplication, the one with the comultiplication holds by duality. We find: 
$$
\gbeg{3}{5}
\got{1}{B}\got{1}{B}\got{1}{F}\gnl
\gmu \gcn{1}{1}{1}{0} \gnl
\gvac{1} \hspace{-0,34cm} \glmptb \gnot{\hspace{-0,34cm}\psi} \grmptb  \gnl
\gvac{1} \gcl{1} \gcl{1} \gnl
\gvac{1} \gob{1}{F} \gob{1}{B}
\gend=
\gbeg{3}{6}
\got{1}{B} \got{1}{B} \got{1}{F} \gnl
\gmu \gcl{1} \gnl
\gcmu \gcl{1} \gnl
\gcl{1} \gbr \gnl
\glm \gcl{1} \gnl
\gob{1}{} \gob{1}{F} \gob{1}{B} 
\gend\stackrel{bialg.}{=}
\gbeg{5}{7}
\got{2}{B} \got{2}{B} \got{1}{F} \gnl
\gcmu \gcmu \gcl{2} \gnl
\gcl{1} \gbr \gcl{1} \gnl
\gmu \gmu \gcn{1}{1}{1}{0} \gnl
\gcn{1}{1}{2}{4} \gvac{2} \hspace{-0,34cm} \gbr \gnl
\gvac{2} \glm \gcl{1} \gnl
\gvac{3} \gob{1}{F} \gob{1}{B} 
\gend\stackrel{mod.}{\stackrel{nat.}{=}}
\gbeg{5}{7}
\got{2}{B} \got{2}{B} \got{1}{F} \gnl
\gcmu \gcmu \gcl{1} \gnl
\gcl{1} \gbr \gbr \gnl
\gcl{1} \gcl{1} \gbr \gcl{1} \gnl
\gcn{1}{1}{1}{3} \glm \gmu \gnl
\gvac{1} \glm \gcn{1}{1}{2}{2} \gnl
\gvac{2} \gob{1}{F} \gob{2}{B} 
\gend\stackrel{nat.}{=}
\gbeg{5}{8}
\got{2}{B} \got{2}{B} \got{1}{F} \gnl
\gcn{1}{3}{2}{2} \gvac{1} \gcmu \gcl{1} \gnl
\gvac{2} \gcl{1} \gbr \gnl
\gvac{2} \glm \gcl{3} \gnl
\gcmu \gcn{1}{1}{3}{1} \gnl
\gcl{1} \gbr \gnl
\glm \gwmu{3} \gnl
\gvac{1} \gob{1}{F} \gob{3}{B} 
\gend=
\gbeg{3}{5}
\got{1}{B}\got{1}{B}\got{1}{F}\gnl
\gcl{1} \glmpt \gnot{\hspace{-0,34cm}\psi} \grmptb \gnl
\glmptb \gnot{\hspace{-0,34cm}\psi} \grmptb \gcl{1} \gnl
\gcl{1} \gmu \gnl
\gob{1}{F} \gob{2}{B}
\gend
$$
we have used the intrinsic properties of a biwreath and naturality of the braiding in $\C$ (with respect to the multiplication in $B$ and the left $B$-action on $F$). 
The relation \equref{YD condition} now becomes: 
\begin{equation} \eqlabel{YD}
\gbeg{4}{8}
\got{2}{B} \got{1}{F} \gnl
\gcmu \gcl{1} \gnl
\gcl{1} \gbr \gnl
\glm \gcl{2} \gnl
\glcm \gnl
\gcl{1} \gbr \gnl
\gmu \gcl{1} \gnl
\gob{2}{B} \gob{1}{F} 
\gend=
\gbeg{4}{5}
\got{2}{B} \got{3}{F} \gnl
\gcmu \glcm \gnl
\gcl{1} \gbr \gcl{1} \gnl
\gmu \glm \gnl
\gob{2}{B} \gob{3}{F} 
\gend
\end{equation}
which means that $F$ is a left-left Yetter-Drinfel'd module in $\C$, that is $F\in{}_B ^B\YD(\C)$.

Let us consider 
\begin{equation} \eqlabel{lambda Radford}
\lambda=
\gbeg{4}{7}
\got{1}{} \got{1}{F} \got{3}{F} \gnl
\gwcm{3} \gcl{2} \gnl
\gcl{1} \glcm \gnl
\gcl{1} \gcl{1} \gbr \gnl
\gcl{1} \glm \gcl{2} \gnl
\gwmu{3} \gnl
\gvac{1} \gob{1}{F} \gob{3}{F.}
\gend
\end{equation}

\begin{lma}
The above $\lambda_M$ satisfies the 2-cell condition. Moreover, if $\mu_M, \mu_C, \Delta_M, \Delta_C$ are canonical, $\lambda_M$ satisfies the four distributive law rules.
\end{lma}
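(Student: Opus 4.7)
The plan is to recognise the $\lambda$ of \equref{lambda Radford} as $d^{1}_{F,F}$, the pre-braiding of $\D_{1}={}_{B}^{B}\YD(\C)$ evaluated at $F$ with itself, in parallel with $\psi=d^{1}_{B,F}$ and $\phi=d^{1}_{F,B}$. Under this lens, the 2-cell condition and the four distributive-law rules become manifestations of the naturality and of the ``hexagon'' axioms of $d^{1}$, which at $F,F$ hold as soon as the (co)multiplication on $F$ is a morphism in $\D_{1}$.

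First I will treat the 2-cell condition, which in the present $\hat\C$-setting is the identity \equref{psi-lambda-phi} between the two composites $B\otimes F\otimes B\to B\otimes F\otimes B$. I would substitute the explicit string-diagram expressions for $\psi$, $\phi$ and $\lambda$ and rewrite both sides using only:
\begin{enumerate}
\item naturality of the braiding in $\C$ (in particular against the multiplication and comultiplication of $B$ and against the $B$-action and $B$-coaction on $F$);
\item the bialgebra axiom \equref{bialg} for $B$;
\item the Yetter-Drinfel'd compatibility \equref{YD} for $F$, which was already established in the subsection.
\end{enumerate}
The two sides reduce to a common normal form obtained essentially by ``sliding'' the middle $F$-strand through a $B\otimes B$-coproduct-into-product block, exactly as in the proof that $d^{1}$ is a pre-braiding in $\D_{1}$.

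For the four distributive-law rules, I exploit the blanket assumption that $\eta_M,\eta_C,\Epsilon_M,\Epsilon_C$ are canonical and the extra assumption of the lemma that $\mu_M,\mu_C,\Delta_M,\Delta_C$ are canonical. Together with the coincidence of canonical restrictions \equref{canonical restrictions coincide}, this endows $F$ with a multiplication $\widetilde{\mu_M}$, a unit $\widetilde{\eta_M}$, a comultiplication $\widetilde{\Delta_M}$ and a counit $\widetilde{\Epsilon_M}$ in $\C$. The module-monad and comodule-monad identities \equref{mod alg}--\equref{comod coalg counity}, once the canonical substitutions are made, are exactly the statements that these structure morphisms are maps in $\D_{1}$, i.e.\ they intertwine the left $B$-action \equref{left B-mod} and left $B$-coaction \equref{left B-comod} on $F$. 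Each of the four distributive-law rules for $\lambda=d^{1}_{F,F}$ then amounts to a single application of naturality of $d^{1}$ in one of its two arguments, applied to the corresponding $\D_{1}$-morphism $\widetilde{\mu_M},\widetilde{\eta_M},\widetilde{\Delta_M},\widetilde{\Epsilon_M}$.

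The step I expect to require the most care is the verification of \equref{psi-lambda-phi}, because there the heuristic ``$\lambda$ is a pre-braiding'' must be implemented without assuming that $B$ itself lies in $\D_{1}$; the proof has to use only the biwreath data at hand (the bialgebra axiom for $B$, the $B$-(co)module structure of $F$, and the YD compatibility \equref{YD}), and to track carefully that the $\Delta$ of $B$ produced in the middle of the diagram is absorbed against the $\mu$ of $B$ by coassociativity, (co)unitality and naturality. For the distributive-law rules the only subtlety is to confirm, before invoking naturality of $d^{1}$, that all four structure 2-cells on $F$ are genuine $\D_{1}$-morphisms, which is precisely what the canonicity assumption is designed to guarantee via the identities derived in \ssref{(co)units B}.
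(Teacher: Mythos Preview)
Your conceptual framing of $\lambda$ as $d^{1}_{F,F}$ is the right heuristic, and for the four distributive-law rules it yields a clean alternative to the paper's explicit computation (the paper instead goes through \equref{lambda 8 Radford}, associativity of the pre-multiplication on $F$, and a string-diagram chase; your naturality-of-$d^{1}$ argument is more conceptual and equally valid once canonicity gives you that $\widetilde{\mu_M},\widetilde{\eta_M},\widetilde{\Delta_M},\widetilde{\Epsilon_M}$ are $\D_{1}$-morphisms).

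However, your treatment of the 2-cell condition is aimed at the wrong identity. You write that it ``is the identity \equref{psi-lambda-phi} between the two composites $B\otimes F\otimes B\to B\otimes F\otimes B$'', but \equref{psi-lambda-phi} is the compatibility constraint on the \emph{1-cell} $(F,\psi,\phi)$, and the $\lambda$ appearing there is the bimonad structure 2-cell $\lambda_B$ of $B$ from \equref{lambda_B}, not $\lambda_M$. The 2-cell condition that the lemma asks you to verify is \equref{2-cell lambda_M}: an identity of 2-cells $BFF\to FFB$ comparing $(\psi\otimes F)(F\otimes\psi)$ followed by $\lambda_M$ with $\lambda_M$ followed by $(\psi\otimes F)(F\otimes\psi)$ (up to the trailing $\mu_B$, which disappears because $\lambda_M$ is canonical). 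Your proposed ingredients---bialgebra axiom for $B$, YD compatibility \equref{YD}, naturality---are not organized towards this; in particular $\phi$ does not enter at all.

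The paper's proof of \equref{2-cell lambda_M} proceeds by first rewriting $(\psi\otimes F)(F\otimes\psi)$ using the comodule law (equation \equref{psi-psi}), and then reducing the claim to the statement that $\lambda$ is \emph{left $B$-linear} with respect to the diagonal $B$-action on $F\otimes F$; this $B$-linearity is in turn established via the module-coalgebra identity \equref{mod coalg}, (co)associativity, the YD condition \equref{YD}, and naturality. In your $d^{1}$-language, what is really being used is that $d^{1}_{F,F}$ is a morphism of left $B$-modules, and this is the step you would need to supply in place of your \equref{psi-lambda-phi} computation.
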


\begin{proof}
First we have:
\begin{equation} \eqlabel{psi-psi}
\gbeg{3}{4}
\got{1}{B}\got{1}{B}\got{1}{F}\gnl
\glmptb \gnot{\hspace{-0,34cm}\psi} \grmptb \gcl{1} \gnl
\gcl{1} \glmptb \gnot{\hspace{-0,34cm}\psi} \grmptb \gnl
\gob{1}{F} \gob{1}{B} \gob{1}{B} 
\gend=
\gbeg{5}{8}
\got{2}{B}\got{1}{B}\got{3}{F}\gnl
\gcmu \gcl{1} \gvac{1} \gcl{4} \gnl
\gcl{1} \gbr \gnl
\glm \gcn{1}{1}{1}{2} \gnl
\gvac{1} \gcl{1} \gcmu \gcl{1} \gnl
\gvac{1} \gcl{1} \gcl{1} \gbr \gnl
\gvac{1} \gcl{1} \glm \gcl{1} \gnl
\gvac{1} \gob{1}{F} \gvac{1} \gob{1}{B} \gob{1}{B} 
\gend\stackrel{comod.}{=}
\gbeg{5}{7}
\gvac{2} \got{1}{B}\got{1}{B}\got{1}{F}\gnl
\gvac{1} \glcm \gcl{1} \gcl{2} \gnl
\gcn{1}{1}{3}{2} \gvac{1} \gbr \gnl
\gcmu \gcl{1} \gbr \gnl
\gcl{1} \gbr \gcl{1} \gcl{2} \gnl
\glm \glm \gnl
\gvac{1} \gob{1}{F} \gvac{1} \gob{1}{B} \gob{1}{B} 
\gend
\end{equation}
then, if we prove that $\lambda$ is left $B$-linear, we would have: 
$$
\gbeg{3}{5}
\got{1}{B} \got{1}{F} \got{1}{F} \gnl
\glmptb \gnot{\hspace{-0,34cm}\psi} \grmptb \gcl{1} \gnl
\gcl{1} \glmptb \gnot{\hspace{-0,34cm}\psi} \grmptb \gnl
\glmptb \gnot{\hspace{-0,34cm}\lambda} \grmptb \gcl{1} \gnl
\gob{1}{F} \gob{1}{B} \gob{1}{B} 
\gend\stackrel{\equref{psi-psi}}{=}
\gbeg{5}{8}
\gvac{2} \got{1}{B}\got{1}{B}\got{1}{F}\gnl
\gvac{1} \glcm \gcl{1} \gcl{2} \gnl
\gcn{1}{1}{3}{2} \gvac{1} \gbr \gnl
\gcmu \gcl{1} \gbr \gnl
\gcl{1} \gbr \gcl{1} \gcl{3} \gnl
\glm \glm \gnl
\gvac{1} \glmptb \gnot{\lambda} \gcmp \grmptb \gnl
\gvac{1} \gob{1}{F} \gvac{1} \gob{1}{B} \gob{1}{B} 
\gend\stackrel{B-lin.}{=}
\gbeg{5}{8}
\gvac{2} \got{1}{B}\got{1}{B}\got{1}{F}\gnl
\gvac{1} \glcm \gcl{1} \gcl{2} \gnl
\gcn{1}{1}{3}{2} \gvac{1} \gbr \gnl
\gcn{1}{1}{2}{2} \gvac{1} \gcl{1} \gbr \gnl
\gcmu \glmptb \gnot{\hspace{-0,34cm}\lambda} \grmptb \gcl{3} \gnl
\gcl{1} \gbr \gcl{1} \gnl
\glm \glm \gnl
\gvac{1} \gob{1}{F} \gvac{1} \gob{1}{B} \gob{1}{B} 
\gend\stackrel{nat.}{=}
\gbeg{5}{7}
\gvac{2} \got{1}{B}\got{1}{B}\got{1}{F}\gnl
\gvac{1} \glcm \glmptb \gnot{\hspace{-0,34cm}\lambda} \grmptb \gnl
\gcn{1}{1}{3}{2} \gvac{1} \gbr \gcl{1} \gnl
\gcmu \gcl{1} \gbr \gnl
\gcl{1} \gbr \gcl{1} \gcl{2} \gnl
\glm \glm \gnl
\gvac{1} \gob{1}{F} \gvac{1} \gob{1}{B} \gob{1}{B} 
\gend\stackrel{\equref{psi-psi}}{=}
\gbeg{4}{5}
\got{1}{B} \got{1}{F} \got{1}{F} \gnl
\gcl{1} \glmptb \gnot{\hspace{-0,34cm}\lambda} \grmptb \gnl
\glmptb \gnot{\hspace{-0,34cm}\psi} \grmptb \gcl{1} \gnl
\gcl{1} \glmptb \gnot{\hspace{-0,34cm}\psi} \grmptb \gnl
\gob{1}{F} \gob{1}{F} \gob{1}{B.} 
\gend
$$
So, we prove that $\lambda$ is left $B$-linear: 
$$\hspace{-0,4cm}
\scalebox{0.84}[0.84]{
\gbeg{4}{10}
\got{2}{B}\got{1}{F}\got{1}{F}\gnl
\gcmu \gcl{1} \gcl{2} \gnl
\gcl{1} \gbr \gnl
\glm \glm \gnl
\gwcm{3} \gcl{2} \gnl
\gcl{1} \glcm \gnl
\gcl{1} \gcl{1} \gbr \gnl
\gcl{1} \glm \gcl{2} \gnl
\gwmu{3} \gnl
\gvac{1} \gob{1}{F} \gvac{1} \gob{1}{B} 
\gend}\stackrel{\equref{mod coalg}}{=}
\scalebox{0.84}[0.84]{
\gbeg{5}{11}
\gvac{1} \got{1}{B} \gvac{1} \got{1}{F}\got{1}{F}\gnl
\gwcm{3} \gcl{1} \gcl{2} \gnl
\gcl{1} \gvac{1} \gbr \gnl  
\hspace{-0,2cm} \gcmu \gcmu \hspace{-0,22cm} \glm \gnl
\gvac{1} \hspace{-0,2cm} \gcl{1} \gbr \gcl{1} \gcn{1}{1}{2}{1} \gnl
\gvac{1} \glm \glm \gcl{2} \gnl
\gvac{2} \gcl{1} \glcm \gnl
\gvac{2} \gcl{1} \gcl{1} \gbr\gnl
\gvac{2} \gcl{1} \glm \gcl{2} \gnl
\gvac{2} \gwmu{3} \gnl
\gvac{3} \gob{1}{F} \gvac{1} \gob{1}{B} 
\gend}\stackrel{coass.}{\stackrel{nat.}{\stackrel{mod.}{=}}}
\scalebox{0.84}[0.84]{
\gbeg{6}{14}
\gvac{1} \got{1}{B} \gvac{1} \got{2}{F}\got{1}{F}\gnl
\gwcm{3} \gvac{1} \gcn{1}{2}{0}{0} \gcl{8} \gnl
\gcl{4}  \gvac{1} \hspace{-0,34cm} \gcmu \gnl
\gvac{2} \gcn{1}{1}{1}{0} \gbr \gnl 
\gvac{1} \gcn{1}{1}{2}{2} \gvac{1} \hspace{-0,34cm} \gcmu \gcn{1}{1}{0}{1} \gnl
\gvac{2} \gbr \gcl{1} \gcl{3} \gnl
\gvac{1} \glm \glm \gnl
\gvac{2} \gcl{5} \glcm \gnl
\gvac{3} \gcl{1} \gbr \gnl
\gvac{3} \gmu \gbr \gnl
\gvac{4} \gcn{1}{1}{0}{1} \gcl{1} \gcl{3} \gnl
\gvac{4} \glm \gnl 
\gvac{2} \gwmu{4} \gnl
\gvac{3} \gob{2}{F} \gvac{1} \gob{1}{B} 
\gend}\stackrel{\equref{YD}}{\stackrel{nat.}{=}}
\scalebox{0.84}[0.84]{
\gbeg{6}{10}
\got{2}{B} \gvac{1} \got{1}{F} \got{4}{F}\gnl
\gcmu \gwcm{3} \gvac{1} \gcn{1}{5}{0}{0}  \gnl
\gcl{1} \gbr \gvac{1} \gcn{1}{1}{1}{2} \gnl 
\glm \hspace{-0,22cm} \gcmu \glcm \gnl  
\gvac{1} \gcn{1}{2}{2}{2} \gcl{1} \gbr \gcl{1} \gnl
\gvac{2} \gmu \glm \gnl
\gvac{1} \gcn{1}{2}{2}{3} \gcn{1}{1}{2}{5} \gvac{2} \gbr \gnl
\gvac{4} \glm \gcl{2} \gnl 
\gvac{2} \gwmu{4} \gnl
\gvac{3} \gob{2}{F} \gvac{1} \gob{1}{B} 
\gend}\stackrel{coass.}{\stackrel{mod.}{\stackrel{nat.}{=}}}
\scalebox{0.84}[0.84]{
\gbeg{7}{9}
\gvac{1} \got{2}{B} \gvac{1} \got{1}{F} \got{3}{F}\gnl
\gvac{1} \gcmu \gwcm{3} \gcl{2}  \gnl
\gcn{1}{1}{3}{2} \gvac{1} \gbr \glcm \gnl 
\gcmu \gcl{1} \gcl{1} \gcl{1} \gbr \gnl  
\gcl{1} \gbr \gcn{1}{1}{1}{3} \glm \gcl{2} \gnl
\glm \gcn{1}{1}{1}{3} \gvac{1} \gbr \gnl
\gvac{1} \gcl{1} \gvac{1} \glm \glm \gnl
\gvac{1} \gwmu{4} \gvac{1}  \gcl{1} \gnl
\gvac{2} \gob{2}{F} \gob{5}{B} 
\gend}\stackrel{nat.}{\stackrel{\equref{mod coalg}}{=}}
\scalebox{0.84}[0.84]{
\gbeg{4}{10}
\got{1}{B} \gvac{1} \got{1}{F}\got{3}{F}\gnl
\gcl{4} \gwcm{3} \gcl{2} \gnl
\gvac{1} \gcl{1} \glcm \gnl
\gvac{1} \gcl{1} \gcl{1} \gbr \gnl
\gvac{1} \gcl{1} \glm \gcl{1} \gnl
\gcn{1}{1}{1}{2} \gwmu{3} \gcn{1}{2}{1}{-1} \gnl
\gcmu \gcl{1} \gnl
\gcl{1} \gbr \gcl{1} \gnl
\glm \glm \gnl
\gvac{1} \gob{1}{F} \gvac{1} \gob{1}{B} 
\gend}
$$
It is easy to see that $\lambda_M$ satisfies the distributive laws with respect to the (co)unit. We prove the one for the multiplication, the one for the comultiplication 
is then valid by the auto-duality inside of the biwreath structure. 
Putting $\lambda$ from \equref{lambda Radford} in \equref{8 lambda_M implies} we get: 
\begin{equation} \eqlabel{lambda 8 Radford}
\gbeg{3}{5}
\got{1}{F} \got{3}{F} \gnl
\gwmu{3} \gnl
\gvac{1} \gcl{1} \gnl
\gwcm{3} \gnl
\gob{1}{F}\gvac{1}\gob{1}{F}
\gend=
\gbeg{3}{7}
\got{1}{} \got{1}{F} \got{5}{F} \gnl
\gwcm{3} \gwcm{3} \gnl
\gcl{1} \glcm \gcl{1} \gvac{1} \gcl{3} \gnl
\gcl{1} \gcl{1} \gbr \gnl
\gcl{1} \glm \gcl{1} \gnl
\gwmu{3} \gwmu{3} \gnl
\gvac{1} \gob{1}{F} \gob{5}{F}
\gend
\end{equation}
then:
$$\hspace{-0,2cm}
\gbeg{3}{5}
\got{1}{F} \got{1}{F} \got{1}{F} \gnl
\gmu \gcn{1}{1}{1}{0} \gnl
\gvac{1} \hspace{-0,34cm} \glmptb \gnot{\hspace{-0,34cm}\lambda} \grmptb \gnl
\gvac{1} \gcl{1} \gcl{1} \gnl
\gvac{1} \gob{1}{F} \gob{1}{F} 
\gend=
\scalebox{0.84}[0.84]{
\gbeg{5}{9}
\got{1}{F} \gvac{1} \got{1}{F}\got{1}{F}\gnl
\gwmu{3} \gcl{4} \gnl
\gvac{1} \gcl{1} \gnl
\gwcm{3} \gnl
\gcl{1} \glcm \gnl
\gcl{1} \gcl{1} \gbr \gnl
\gcl{1} \glm \gcl{2} \gnl
\gwmu{3} \gnl
\gvac{1} \gob{1}{F} \gvac{1} \gob{1}{F} 
\gend}
\stackrel{\equref{lambda 8 Radford}}{=}
\scalebox{0.84}[0.84]{
\gbeg{7}{10}
\gvac{1} \got{1}{F} \gvac{1} \got{2}{F} \got{1}{F}\gnl
\gwcm{3} \gcmu \gcl{4} \gnl
\gcl{1} \glcm \gcl{1} \gcl{2} \gnl
\gcl{1} \gcl{1} \gbr \gnl
\gcl{1} \glm \gmu \gnl
\gwmu{3} \hspace{-0,22cm} \glcm \gcn{1}{1}{2}{1} \gnl
\gvac{2} \gcn{1}{1}{0}{1} \gcl{1} \gbr \gnl
\gvac{2} \gcl{1} \glm \gcl{2} \gnl
\gvac{2} \gwmu{3} \gnl
\gvac{3} \gob{1}{F} \gob{3}{F} 
\gend}\stackrel{ass. F}{\stackrel{\equref{comod alg}}{=}}
\scalebox{0.84}[0.84]{
\gbeg{8}{13}
\gvac{1} \got{1}{F} \gvac{2} \got{1}{F} \got{3}{F}\gnl
\gwcm{3} \gwcm{3} \gcl{7} \gnl
\gcl{1} \glcm \gcl{1} \gvac{1} \gcl{2} \gnl
\gcl{1} \gcl{1} \gbr \gnl
\gcl{6} \glm \gcn{1}{1}{1}{2} \gcn{1}{1}{3}{4} \gnl
\gvac{2} \gcl{5} \hspace{-0,22cm} \glcm \glcm \gnl
\gvac{3} \gcl{1} \gbr \gcl{1} \gnl
\gvac{3} \gmu \gmu \gnl
\gvac{4} \gcn{1}{1}{0}{2} \gvac{1} \hspace{-0,34cm} \gbr \gnl
\gvac{5} \glm \gcl{3} \gnl
\gcn{1}{1}{3}{4} \gvac{2} \gwmu{4} \gnl
\gvac{2} \hspace{-0,34cm} \gwmu{4} \gnl
\gvac{3} \gob{2}{F} \gob{6}{F} 
\gend}$$

$$\stackrel{comod.}{\stackrel{mod.}{=}}
\scalebox{0.84}[0.84]{
\gbeg{8}{13}
\gvac{1} \got{1}{F} \gvac{3} \got{1}{F} \got{3}{F}\gnl
\gwcm{4} \gwcm{3} \gcl{4} \gnl
\gcl{1} \gvac{1} \glcm \gcl{1} \glcm \gnl
\gcl{1} \gcn{1}{1}{3}{2} \gvac{1} \gbr \gcl{1} \gcl{2} \gnl
\gcl{7} \gcmu \gcl{1} \gbr \gnl
\gvac{1} \gcl{1} \gbr \gcl{1} \gmu \gcn{1}{1}{1}{0} \gnl
\gvac{1} \glm \gcl{1} \gcn{1}{1}{1}{2} \gvac{1} \hspace{-0,34cm} \gbr \gnl
\gvac{2} \gcn{1}{2}{2}{2} \gcn{1}{2}{2}{5} \gvac{1} \glm \gcl{5} \gnl
\gvac{6} \gcl{1} \gnl
\gvac{2} \gcn{1}{1}{2}{3} \gvac{2} \glm \gcl{3} \gnl
\gvac{3} \gwmu{4} \gnl
\gvac{1} \hspace{-0,34cm} \gwmu{5} \gnl
\gvac{3} \gob{2}{F} \gob{6}{F} 
\gend}
\stackrel{nat.}{\stackrel{\equref{mod coalg}}{=}}
\scalebox{0.84}[0.84]{
\gbeg{8}{10}
\gvac{1} \got{1}{F} \gvac{2} \got{1}{F} \got{3}{F}\gnl
\gwcm{3} \gwcm{3} \gcl{2} \gnl
\gcl{1} \glcm \gcl{1} \glcm \gnl
\gcl{1} \gcl{3} \gbr \gcl{1} \gbr \gnl
\gcl{4} \gcl{1} \gcl{2} \gcn{1}{1}{1}{3} \glm \gcl{1} \gnl
\gvac{4} \gbr \gcl{1} \gnl
\gvac{1} \gcn{1}{1}{1}{3} \gwmu{3} \gmu \gnl
\gvac{2} \glm \gcn{1}{2}{4}{4} \gnl
\gwmu{4} \gnl
\gvac{1} \gob{2}{F} \gob{6}{F} 
\gend}
\stackrel{nat.}{=}
\scalebox{0.84}[0.84]{
\gbeg{7}{12}
\gvac{1} \got{1}{F} \gvac{1} \got{1}{F} \got{3}{F}\gnl
\gvac{1} \gcl{5} \gwcm{3} \gcl{2} \gnl
\gvac{2} \gcl{1} \glcm \gnl
\gvac{2} \gcl{1} \gcl{1} \gbr \gnl
\gvac{2} \gcl{1} \glm \gcl{6} \gnl
\gvac{2} \gwmu{3} \gnl
\gwcm{3} \gcl{2} \gnl
\gcl{1} \glcm \gnl
\gcl{1} \gcl{1} \gbr \gnl
\gcl{1} \glm \gcl{1} \gnl
\gwmu{3} \gwmu{3} \gnl
\gvac{1} \gob{1}{F} \gob{5}{F} 
\gend}=
\gbeg{4}{5}
\got{1}{F} \got{1}{F} \got{1}{F} \gnl
\gcl{1} \glmptb \gnot{\hspace{-0,34cm}\lambda} \grmptb \gnl
\glmptb \gnot{\hspace{-0,34cm}\lambda} \grmptb \gcl{1} \gnl
\gcl{1} \gmu \gnl
\gob{1}{F} \gob{2}{F} 
\gend
$$
We have used that the pre-multiplication of $F$ is associative. By duality then, also the pre-comultiplication of $F$ should be coassoaciative. This is why we 
assumed that the 2-cells $\mu_M, \mu_C, \Delta_M, \Delta_C$ are canonical. 
\qed\end{proof}

For the rest of the $F$-(co)module structures on $B$, with the above $\psi$ and $\phi$ and by \equref{right F-mod} -- \equref{left F-mod} we obtain: 
$$
\gbeg{2}{3}
\got{1}{B} \got{1}{F} \gnl
\grmo \gcl{1} \gnl 
\gob{1}{B}
\gend=
\gbeg{3}{6}
\got{2}{B} \got{1}{F} \gnl
\gcmu \gcl{1} \gnl
\gcl{1} \gbr \gnl
\glm \gcl{2} \gnl
\gvac{1} \gcu{1} \gnl
\gob{1}{} \gob{1}{} \gob{1}{B} 
\gend\stackrel{\equref{mod coalg counity}}{=}
\gbeg{2}{4}
\got{1}{B} \got{1}{F} \gnl
\gcl{1} \gcl{1} \gnl 
\gcl{1} \gcu{1} \gnl 
\gob{1}{F}
\gend
\hspace{2cm}
\gbeg{2}{3}
\got{1}{B} \gnl
\gcl{1} \hspace{-0,42cm} \glmf \gnl  
\gvac{1} \gob{1}{B} \gob{1}{F}
\gend=
\gbeg{3}{6}
\got{1}{} \got{1}{} \got{1}{B} \gnl
\gvac{1} \guf{1} \gcl{2} \gnl
\glcm \gnl
\gcl{1} \gbr \gnl
\gmu \gcl{1} \gnl
\gob{2}{B} \gob{1}{F} 
\gend\stackrel{\equref{comod alg unity}}{=}
\gbeg{2}{4}
\got{1}{B} \gnl
\gcl{1} \guf{1} \gnl 
\gcl{1} \gcl{1} \gnl 
\gob{1}{B} \gob{1}{F} 
\gend
$$

$$
\gbeg{2}{3}
\got{3}{B} \gnl
\grmo \gvac{1} \gcl{1} \gnl
\gob{1}{F} \gob{1}{B}
\gend=
\gbeg{3}{5}
\got{2}{B} \got{1}{} \gnl
\gcmu \gu{1} \gnl
\gcl{1} \gbr \gnl
\glm \gcl{1} \gnl
\gob{1}{} \gob{1}{F} \gob{1}{B} 
\gend\stackrel{\equref{mod alg unity}}{=}
\gbeg{2}{4}
\got{3}{B} \gnl
\gu{1} \gcl{1} \gnl
\gcl{1} \gcl{1} \gnl
\gob{1}{F} \gob{1}{B}
\gend
\hspace{2cm}
\gbeg{2}{3}
\got{1}{F} \got{1}{B} \gnl
\glmf \gcn{1}{1}{-1}{-1} \gnl
\gob{3}{B}
\gend=
\gbeg{3}{5}
\got{1}{} \got{1}{F} \got{1}{B} \gnl
\glcm \gcl{1} \gnl
\gcl{1} \gbr \gnl
\gmu \gcuf{1} \gnl
\gob{2}{B} \gob{1}{} 
\gend\stackrel{\equref{comod coalg counity}}{=}
\gbeg{2}{4}
\got{1}{F} \got{1}{B} \gnl
\gcl{1} \gcl{1} \gnl 
\gcuf{1} \gcl{1} \gnl 
\gob{1}{} \gob{1}{B} 
\gend
$$
meaning that $B$ is a trivial right $F$-module, right $F$-comodule and a trivial left $F$-comodule and left $F$-module. (As a matter of fact, the right hand-side identities  
above are dual versions of the left hand-side ones, so it is enough to prove the latter ones, the former follow by duality.)


\underline{When $\mu_M, \mu_C, \Delta_M, \Delta_C$ are canonical.} As we saw in \prref{can implies assoc}, in this case 
by relations \equref{weak assoc. mu_M} -- \equref{weak counit Epsilon_C} the pre-(co)multiplications 
$
\gbeg{2}{1}
\gmu \gnl
\gend,
\gbeg{2}{1}
\gmuf{1} \gnl
\gend, 
\gbeg{2}{1}
\gcmu \gnl
\gend, 
\gbeg{2}{1}
\gcmuf{1} \gnl
\gend
$ 
are (co)associative. Now, with $\psi$ and $\phi$ defined as above, the relations \equref{mod alg} -- \equref{comod coalg} yield that $F$ is: 
\begin{itemize}
\item a left $B$-module algebra;
\item a left $B$-module coalgebra;
\item a left $B$-comodule algebra and
\item a left $B$-comodule coalgebra
\end{itemize}
in the usual sense. 

By \prref{can implies assoc}, part 5, relations \equref{1 lambda_M} -- \equref{8 lambda_M} yield that $F$ is a bimonad in $\hat\C$. 
Observe that in view of \equref{lambda 8 Radford} we have that $F$ is a bialgebra in the braided monoidal category ${}_B ^B\YD(\C)$. 

\medskip

The identities \equref{mod alg} -- \equref{comod coalg counity} are trivially satisfied, so they do not bring any new information, because the (co)module structures involved are trivial, 
as we commented above, and because of the (co)unity identities from \equref{mod alg unity} -- \equref{comod coalg counity}. 
The rest of the identities, \equref{weak action} -- \equref{normalized 2-cycle ro}, neither bring new information because $\mu_M, \mu_C, \Delta_M, \Delta_C$ are canonical 
(hence $\sigma, \rho, \Phi_{\lambda}, \omega$ are trivial) and because of the trivial (co)module structures. 

Collecting the above data we see that we recovered the necessary and sufficient conditions for the Radford biproduct to be a bialgebra \cite{Rad1} but here in the setting of 
an arbitrary braided monoidal category $\C$. Indeed, we have: 

\begin{thm} \thlabel{Radford}
Given a braided monoidal category $\C$, a bialgebra $B$ and a left $B$-module algebra and $B$-comodule coalgebra $F$ in $\C$. The following are equivalent: 
\begin{enumerate}
\item the biproduct $F\times B$ is a bialgebra in $\C$; 
\item $F$ is a 
bialgebra in ${}_B ^B\YD(\C)$. 
\end{enumerate}
\end{thm}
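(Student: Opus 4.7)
The plan is to apply directly the biwreath machinery developed above. For $(2)\Rightarrow(1)$, the bialgebra structure of $F$ in ${}_B^B\YD(\C)$ supplies canonical 2-cells $\mu_M,\eta_M,\Delta_C,\Epsilon_C$ in the obvious way (from the (co)multiplication and (co)unit of $F$), and dually $\mu_C,\eta_C,\Delta_M,\Epsilon_M$, together with $\widetilde{\lambda_M}=\widetilde{\lambda_C}=\lambda$ as in \equref{lambda Radford} and $\psi=d^1_{B,F},\phi=d^1_{F,B}$. I first verify that this data assembles into a biwreath in $\hat\C$: the axioms \equref{psi laws for B} and \equref{phi laws for B} reduce to the YD compatibility \equref{YD} already verified in the text preceding the statement, while the eight monadic and eight comonadic bimonad conditions \equref{1 lambda_M}--\equref{8 lambda_C} collapse, using \prref{can implies assoc}(5)--(6) together with the triviality of $\sigma,\rho,\Phi_\lambda,\omega$ guaranteed by \prref{trivial structures}, to the bialgebra axiom for $F$ in ${}_B^B\YD(\C)$.

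Once the biwreath is established, the wreath product produces $FB$ as a bimonad in $\hat\C$, i.e.\ a bialgebra in $\C$, with multiplication and comultiplication given by \equref{wreath (co)product}. Expanding those formulas with the chosen $\psi$ and $\phi$, and using the triviality of the $F$-(co)actions on $B$ deduced in the body of the text, recovers exactly Radford's biproduct formulas on $F\ot B$.

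For the converse $(1)\Rightarrow(2)$, I reverse the analysis. Starting from the Radford biproduct bialgebra structure on $F\ot B$, I read off a biwreath with the same $\psi,\phi,\lambda$ and the canonical 2-cells induced by composing the Radford (co)multiplication with $\id_F\ot\eta_B$ and $\id_F\ot\Epsilon_B$; since $\eta_B$ is a mono and $\Epsilon_B$ an epi (by \equref{epsilon-eta B}), these canonical restrictions determine (and are determined by) the wreath product structure on $FB$. The bimonad axioms on $FB$ then translate back into the biwreath compatibilities, from which \equref{mod alg}--\equref{comod coalg} yield that $F$ is a $B$-module (co)algebra and $B$-comodule (co)algebra in $\C$, and \equref{1 lambda_M}--\equref{8 lambda_M} together with their comonadic duals yield the bialgebra-in-YD axiom for $F$. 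The YD compatibility of $F$ is forced by the bialgebra axiom of $FB$ after isolating the $F$-parts via $\eta_B$ and $\Epsilon_B$.

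The main obstacle is the bookkeeping in the converse direction: extracting from the single bialgebra axiom on $FB$ both the YD compatibility on $F$ and the bialgebra-in-YD compatibility for $F$ requires a careful diagrammatic decomposition, separating the contributions of $B$ (which cancel by the bialgebra axiom of $B$ and the antipode-free computations performed in the text) from those of $F$. I expect this to be handled entirely by precomposing and postcomposing the bialgebra axiom for $F\ot B$ with the appropriate combinations of units and counits of $B$, exactly mirroring the procedure used in \ssref{(co)units B} to deduce the intrinsic structures of a biwreath.
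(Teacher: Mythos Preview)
The paper omits the proof entirely (``The proof of the above theorem is lengthy, we do not type it here because of the extent of the article''), so there is no detailed argument to compare against. Nonetheless, your proposal has a genuine gap.

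In the direction $(2)\Rightarrow(1)$ you write: ``Once the biwreath is established, the wreath product produces $FB$ as a bimonad in $\hat\C$.'' This is exactly what is at stake, not something you may invoke. The paper is explicit in the Introduction that for a general biwreath ``the newly obtained 1-cell $FB$ does not have to be a bimonad in $\K$''; indeed, the point of \thref{Radford} is that in \emph{this particular} example the biwreath \emph{does} yield a bimonad. The biwreath structure gives you a monad $(FB,\nabla_{FB},\eta_{FB})$ from the wreath and a comonad $(FB,\Delta_{FB},\Epsilon_{FB})$ from the cowreath via \equref{wreath (co)product}, but the bialgebra compatibility $\Delta_{FB}\circ\nabla_{FB}=(\nabla_{FB}\ot\nabla_{FB})(\id\ot c\ot\id)(\Delta_{FB}\ot\Delta_{FB})$ on $FB$ is an additional identity in $\C$ that must be checked directly; nothing in the biwreath axioms \equref{1 lambda_M}--\equref{8 lambda_C} guarantees it in general. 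Your appeal to \prref{can implies assoc}(5)--(6) only shows that $F$ itself is a bimonad in $\K$, not that $FB$ is.

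So for $(2)\Rightarrow(1)$ you still owe the actual computation that the smash product and smash coproduct on $F\ot B$ satisfy the bialgebra axiom in $\C$, using the YD condition \equref{YD} and the bialgebra-in-YD identity \equref{lambda 8 Radford}. Your outline for $(1)\Rightarrow(2)$ via precomposing/postcomposing with $\eta_B,\Epsilon_B$ is reasonable as a strategy, but since it ``mirrors'' the forward direction, it presupposes that the forward direction has been carried out at the level of explicit diagram manipulations rather than by citing a non-existent general theorem.
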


(The proof of the above theorem is lengthy, we do not type it here because of the extent of the article.)

In the biproduct $F\times B$, the product is given by the smash product and the coproduct by the smash coproduct, the (co)unity is componentwise and all these structures come out from the 
wreath and the cowreath structures \equref{wreath (co)product}, namely: 
$$
\nabla_{FB}=
\gbeg{3}{6}
\got{1}{F} \got{2}{B} \got{1}{F} \got{1}{B}  \gnl
\gcl{1} \gcmu \gcl{1} \gcl{1} \gnl
\gcl{1} \gcl{1} \gbr \gcl{1} \gnl
\gcl{1} \glm \gmu \gnl
\gwmu{3}\gvac{1} \hspace{-0,22cm} \gcl{1} \gnl
\gob{4}{F} \gob{1}{B}
\gend\hspace{1,5cm}
\eta_{FB}=
\gbeg{2}{3}
\gu{1} \gu{1} \gnl
\gcl{1} \gcl{1} \gnl
\gob{1}{F} \gob{1}{B}
\gend; \hspace{1,5cm}
\Delta_{FB}=
\gbeg{3}{6}
\gvac{1} \got{1}{F} \got{4}{B} \gnl
\gwcm{3} \gcn{1}{1}{2}{2} \gnl
\gcl{1} \glcm \gcmu \gnl
\gcl{1} \gcl{1} \gbr \gcl{1} \gnl
\gcl{1} \gmu \gcl{1} \gcl{1} \gnl
\gob{1}{F} \gob{2}{B} \gob{1}{F} \gob{1}{B}  
\gend\hspace{1,5cm}
\Epsilon_{FB}=
\gbeg{2}{4}
\got{1}{F} \got{1}{B} \gnl
\gcl{1} \gcl{1} \gnl
\gcu{1} \gcu{1} \gnl
\gend
$$

\begin{ex}
For $\C=Vec$, the category of vector spaces over a field $k$, we recover \cite[Theorem 2.1]{Rad1}. 
The above example reveals that a biwreath encodes Radford biproduct: $F$ is a bialgebra in ${}_B ^B\YD(\C)$.
\end{ex}

\section{Biwreath-like objects and examples in $\K=\hat\C$} \selabel{bl objects}  

In all the examples of (mixed) (co)wreaths the 2-cells $\eta_M , \Epsilon_M, \eta_C, \Epsilon_C$ are canonical, at least as to the knowledge of the author. 
In what follows we are going to study some examples in which $B$ will have non-trivial $F$-(co)module structures. 
Then in view of \prref{trivial structures} some of the 2-cells $\eta_M , \Epsilon_M, \eta_C, \Epsilon_C$ should not appear in the site. When any of them appears 
we will consider it canonical. Henceforth, our next examples will partially have a structure of a biwreath.

\begin{defn} \delabel{bl object}
A biwreath-like object in $\K$ is a monad $(F, \mu_M, \eta_M)$ in $\EM^M(\K)$ and a comonad $(F, \Delta_C, \Epsilon_C)$ in $\EM^C(\K)$ over the same bimonad $B$ in $\K$ 
with the canonical restrictions: 
$$
\gbeg{2}{3}
\got{1}{F} \got{1}{F} \gnl
\gmu \gnl
\gob{2}{F} \gnl
\gend:=
\gbeg{2}{4}
\got{1}{F} \got{1}{F} \gnl
\glmptb \gnot{\hspace{-0,34cm}\mu_M} \grmptb \gnl
\gcl{1} \gcu{1} \gnl
\gob{1}{F} \gnl
\gend \hspace{2cm}
\gbeg{1}{4}
\got{1}{} \gnl
\gu{1} \gnl
\gcl{1} \gnl
\gob{1}{F} \gnl
\gend:=
\gbeg{1}{4}
\got{1}{} \gnl
\glmpb \gnot{\hspace{-0,34cm}\eta_M} \grmpb \gnl
\gcl{1} \gcu{1} \gnl
\gob{1}{F} \gnl
\gend \hspace{2cm}
\gbeg{2}{4}
\got{2}{F} \gnl
\gcn{1}{1}{2}{2} \gnl
\gcmuf{1} \gnl
\gob{1}{F} \gob{1}{F} \gnl
\gend:=
\gbeg{2}{4}
\got{1}{F} \gnl
\gcl{1} \gu{1} \gnl
\glmptb \gnot{\hspace{-0,34cm}\Delta_C} \grmptb \gnl
\gob{1}{F} \gob{1}{F} \gnl
\gend \hspace{2cm}
\gbeg{1}{3}
\got{1}{F} \gnl
\gcl{1} \gnl
\gcuf{1} \gnl
\gob{1}{} \gnl
\gend:=
\gbeg{1}{3}
\got{1}{F} \gnl
\gcl{1} \gu{1} \gnl
\glmpt \gnot{\hspace{-0,34cm}\Epsilon_C} \grmpt \gnl
\gob{1}{} \gnl
\gend
$$
equipped with a 2-cell $\lambda:FF\to FF$ in $\K$ so that: 
\begin{enumerate}[(a)]
\item $\lambda$ is a distributive law in the sense of \deref{bimonad} b); 
\item the following compatibility conditions are fulfilled: 
\begin{equation} \eqlabel{lambda mixed 1-3}
\gbeg{2}{3}
\got{1}{F} \got{1}{F} \gnl
\gcl{1} \gcl{1} \gnl
\gcuf{1} \gcuf{1} \gnl
\gend=
\gbeg{2}{3}
\got{1}{F} \got{1}{F} \gnl
\gmu \gnl
\gvac{1} \hspace{-0,22cm} \gcuf{1} \gnl
\gend \hspace{2cm}
\gbeg{1}{5}
\got{2}{}  \gnl
 \gu{1} \gnl
 \gcl{1} \gnl
\hspace{-0,34cm} \gcmuf{1} \gnl
\gob{1}{F} \gob{1}{F} \gnl
\gend=
\gbeg{2}{4}
\got{1}{} \gnl
\gu{1} \gu{1} \gnl
\gcl{1} \gcl{1} \gnl
\gob{1}{F} \gob{1}{F}
\gend  \hspace{2cm}
\gbeg{1}{2}
\gu{1} \gnl
\gcuf{1} \gnl
\gob{2}{} \gnl
\gend=
\Id_{id_{\A}}
\end{equation}
and
\begin{center} \hspace{-0,2cm}
\begin{tabular}{p{5.6cm}p{0cm}p{6cm}}
\begin{equation} \eqlabel{lambda_M 8}
\gbeg{2}{5}
\got{1}{F} \got{1}{F} \gnl
\gcl{1} \gcl{1} \gnl
\glmptb \gnot{\hspace{-0,34cm}\mu_M} \grmptb \gnl
\hspace{-0,22cm} \gcmuf{1} \gvac{2} \gcn{1}{1}{0}{1} \gnl
\gob{1}{F} \gob{1}{F} \gob{1}{B} \gnl
\gend=
\gbeg{3}{6}
\got{1}{F} \got{2}{F} \gnl
\gcl{1} \gcn{1}{1}{2}{2} \gnl
\gcl{1} \gcmuf{1} \gnl
\glmptb \gnot{\hspace{-0,34cm}\lambda} \grmptb \gcl{1} \gnl
\gcl{1} \glmptb \gnot{\hspace{-0,34cm}\mu_M} \grmptb \gnl
\gob{1}{F} \gob{1}{F} \gob{1}{B} \gnl
\gend
\end{equation} & & \vspace{0,1cm}
\begin{equation}\eqlabel{lambda_C 8}
\gbeg{3}{4}
\got{1}{F} \got{1}{F} \got{1}{B} \gnl
\gmu \gcn{1}{1}{1}{0} \gnl
\gvac{1} \hspace{-0,34cm} \glmptb \gnot{\hspace{-0,34cm}\Delta_C} \grmptb \gnl
\gvac{1} \gob{1}{F} \gob{1}{F} \gnl
\gend=
\gbeg{3}{5}
\got{1}{F} \got{1}{F} \got{1}{B} \gnl
\gcl{1} \glmptb \gnot{\hspace{-0,34cm}\Delta_C} \grmptb \gnl
\glmptb \gnot{\hspace{-0,34cm}\lambda} \grmptb \gcl{1} \gnl
\gcl{1} \gmu \gnl
\gob{1}{F} \gob{2}{F.} \gnl
\gend
\end{equation}
\end{tabular}
\end{center}
\end{enumerate}
\end{defn}

Applying $\Epsilon_B$ to \equref{lambda_M 8} (or applying $\eta_B$ to \equref{lambda_C 8}), one gets \equref{bimonad in K}, 
then $F$ is a bimonad in $\K$ with possibly non-(co)associative (co)multiplication. On the other hand, applying 
$F
\gbeg{1}{2}
\got{1}{F}\gnl
\gcuf{1} \gnl
\gend B$ to \equref{lambda_M 8} and 
$F
\gbeg{1}{2}
\gu{1} \gnl
\gob{1}{F}\gnl
\gend B$ to \equref{lambda_C 8}, similarly as in \equref{mu_m con sigma} and \equref{Delta_C con ro}, we obtain $\mu_M$ and $\Delta_C$ below:  
\begin{center} \hspace{-1,3cm} 
\begin{tabular}{p{4.5cm}p{0,7cm}p{5cm}p{2cm}p{5cm}}
\begin{equation} \eqlabel{bimonad in K}
\gbeg{2}{5}
\got{1}{F} \got{1}{F} \gnl
\gmu \gnl
\gcn{1}{1}{2}{2} \gnl
\gcmuf{1} \gnl
\gob{1}{F}\gob{1}{F} 
\gend=
\gbeg{3}{6}
\got{1}{F} \got{2}{F} \gnl
\gcl{1} \gcn{1}{1}{2}{2} \gnl
\gcl{1} \gcmuf{1} \gnl
\glmptb \gnot{\hspace{-0,34cm}\lambda} \grmptb  \gcl{1} \gnl
\gcl{1} \gmu \gnl
\gob{1}{F}  \gob{2}{F}
\gend
\end{equation} & & 
\begin{equation*}
\mu_M=
\gbeg{3}{6}
\got{1}{F} \got{2}{F} \gnl
\gcl{1} \gcn{1}{1}{2}{2} \gnl
\gcl{1} \gcmuf{1} \gnl
\glmptb \gnot{\hspace{-0,34cm}\lambda} \grmptb \gcl{1} \gnl
\gcl{1} \glmpt \gnot{\hspace{-0,34cm}\sigma} \grmptb \gnl
\gob{1}{F} \gob{3}{B} \gnl
\gend
\hspace{2cm}
\Delta_C=
\gbeg{3}{5}
\got{1}{F} \got{3}{B} \gnl
\gcl{1} \glmpb \gnot{\hspace{-0,34cm}\rho} \grmptb \gnl
\glmptb \gnot{\hspace{-0,34cm}\lambda} \grmptb \gcl{1} \gnl
\gcl{1} \gmu \gnl \gnl
\gob{1}{F} \gob{2}{F} \gnl
\gend
\end{equation*}
\end{tabular}
\end{center}
where
\begin{center} \hspace{1,2cm}
\begin{tabular}{p{4cm}p{1cm}p{4cm}}
\gbeg{2}{4}
\got{1}{F} \got{1}{F} \gnl
\glmpt \gnot{\hspace{-0,34cm}\sigma} \grmptb \gnl
\gvac{1} \gcl{1} \gnl
\gob{3}{B}
\gend:=
\gbeg{2}{4}
\got{1}{F} \got{1}{F} \gnl
\glmptb \gnot{\hspace{-0,34cm}\mu_M} \grmptb \gnl
\gcuf{1} \gcl{1} \gnl
\gob{3}{B}
\gend
& &
\gbeg{2}{4}
\got{3}{B} \gnl
\gvac{1} \gcl{1} \gnl
\glmpb \gnot{\hspace{-0,34cm}\rho} \grmptb \gnl
\gob{1}{F}\gob{1}{F}
\gend:=
\gbeg{2}{4}
\got{3}{B} \gnl
\gu{1} \gcl{1} \gnl
\glmptb \gnot{\hspace{-0,34cm}\Delta_C} \grmptb \gnl
\gob{1}{F}\gob{1}{F.}
\gend
\end{tabular}
\end{center}

\subsection{Case two: $\D_2=\YD(\C)_F ^F$ } \sslabel{ex psi_2}

We proceed now with $\K=\hat\C$ and we consider a biwreath-like object $F$ with $\lambda$ given by \equref{left lambda}.
Then $F$ is a bialgebra in $\C$ (with possibly non-(co)associative (co)multiplication). Then by the naturality 
of the braiding in $\C$ it is not difficult to see that $\lambda$ satisfies the distributive law conditions in the part a) of \deref{bl object}. 
Now $\mu_M$ and $\Delta_C$ from above become: 
\begin{center} \hspace{-2cm} 
\begin{tabular}{p{4.5cm}p{0,7cm}p{5cm}p{2cm}p{5cm}}
\begin{equation} \eqlabel{left lambda}
\lambda=
\gbeg{3}{6}
\got{2}{F} \got{1}{F} \gnl
\gcn{2}{1}{2}{2} \gcl{2} \gnl
\gcmuf{1}  \gnl  
\gcl{1} \gbr  \gnl
\gmu \gcl{1} \gnl
\gob{2}{F} \gob{1}{F}
\gend
\end{equation} & & 
\begin{equation*}
\mu_M=
\gbeg{3}{6}
\got{2}{F} \got{2}{F} \gnl
\gcn{2}{1}{2}{2} \gcn{2}{1}{2}{2} \gnl
\gcmuf{1} \gvac{2} \gcmuf{1} \gnl
\gcl{1} \gbr \gcl{1} \gnl
\gmu \glmpt \gnot{\hspace{-0,34cm}\sigma} \grmptb  \gnl
\gob{2}{F} \gob{3}{B} 
\gend
\hspace{2cm}
\Delta_C=
\gbeg{3}{6}
\got{2}{F} \got{3}{B} \gnl
\gcn{2}{1}{2}{2} \gvac{1} \gcl{1} \gnl
\gcmuf{1} \gvac{2} \glmpb \gnot{\hspace{-0,34cm}\rho} \grmptb  \gnl
\gcl{1} \gbr \gcl{1} \gnl
\gmu \gmu  \gnl
\gob{2}{F} \gob{2}{F} 
\gend
\end{equation*}
\end{tabular}
\end{center}

The category $\D_2=\YD(\C)_F ^F$ of right-right Yetter-Drinfel'd modules over 
a bialgebra $F$ in $\C$ has a pre-braiding 
$$d^2_{X,Y}=
\gbeg{3}{5}
\got{1}{X} \got{1}{Y} \gnl
\gcl{1} \grcm \gnl
\gbr \gcl{1} \gnl
\gcl{1} \grm \gnl
\gob{1}{Y} \gob{1}{X} 
\gend
$$
for any $X,Y\in\YD(\C)_F ^F$. 
In our setting, where $F$ is a biwreath-like object, it is a bialgebra in $\C$ with possibly non-(co)associative (co)multiplication and it (co)acts from the right on 
$B$ in some broader sense, which we will analyze below. We set $\psi=d^2_{B,F}$ and $\phi=d^2_{F,B}$ to obtain: 
\begin{center} 
\begin{tabular}{p{4.2cm}p{2cm}p{4.2cm}}
\begin{equation} \eqlabel{psi_2}
\psi=
\gbeg{3}{6}
\got{1}{B} \got{2}{F} \gnl
\gcl{1} \gcn{1}{1}{2}{2} \gnl 
\gcl{1} \gcmuf{1} \gnl
\gbr \gcl{1} \gnl
\gcl{1} \grmo \gcl{1} \gnl
\gob{1}{F} \gob{1}{B} 
\gend
\end{equation}  &  &  
\begin{equation} \eqlabel{phi_2}
\phi= 
\gbeg{3}{6}
\got{1}{F} \got{1}{B} \gnl
\gcl{1} \gcl{1}  \gnl
\gcl{1} \gcl{1} \hspace{-0,42cm} \glmf \gnl
\gvac{1} \gbr \gcl{1} \gnl
\gvac{1} \gcl{1} \gmu \gnl
\gvac{1} \gob{1}{B} \gob{2}{F} 
\gend
\end{equation}
\end{tabular} 
\end{center} 
then similarly as in \equref{right F-mod} and \equref{right F-comod} we have: 
$$\gbeg{2}{3}
\got{1}{B} \got{1}{F} \gnl
\grmo \gcl{1} \gnl 
\gob{1}{B}
\gend=
\gbeg{2}{4}
\got{1}{B} \got{1}{F} \gnl
\glmptb \gnot{\hspace{-0,34cm}\psi} \grmptb \gnl
\gcuf{1} \gcl{1} \gnl
\gob{3}{B} 
\gend
\hspace{3cm} 
\gbeg{2}{3}
\got{1}{B} \gnl
\gcl{1} \hspace{-0,42cm} \glmf \gnl  
\gvac{1} \gob{1}{B} \gob{1}{F}
\gend=
\gbeg{2}{4}
\got{3}{B} \gnl
\gu{1} \gcl{1} \gnl
\glmptb \gnot{\hspace{-0,34cm}\phi} \grmptb \gnl
\gob{1}{B} \gob{1}{F.} 
\gend
$$
The definitions \equref{left F-mod} and \equref{left F-comod} make sense here and by \equref{lambda mixed 1-3}, \equref{weak action unity} and \equref{weak coaction counity} 
we have that $B$ is a trivial left $F$-(co)module (the same we obtain by \prref{trivial structures}, 1) and 4), as we consider $\eta_M$ and $\Epsilon_C$ canonical). 

We are going to study the structure of the biwreath-like object on $F$ with the above $\psi, \phi$ and $\lambda$ applying the same arguments as in \ssref{ex psi_1}, but this time 
considering only those axioms of a biwreath which involve the wreath and the cowreath structures. 
We check that the axioms \equref{psi laws for B} and \equref{phi laws for B} are fulfilled. For the same reasons as before we prove only the axiom with the multiplication: 
$$
\gbeg{3}{5}
\got{1}{B}\got{1}{B}\got{1}{F}\gnl
\gmu \gcn{1}{1}{1}{0} \gnl
\gvac{1} \hspace{-0,34cm} \glmptb \gnot{\hspace{-0,34cm}\psi} \grmptb  \gnl
\gvac{1} \gcl{1} \gcl{1} \gnl
\gvac{1} \gob{1}{F} \gob{1}{B}
\gend=
\gbeg{4}{6}
\got{1}{B} \got{1}{B} \got{1}{F} \gnl
\gmu \gcl{1} \gnl
\gvac{1} \hspace{-0,32cm} \gcl{1} \gcmuf{1} \gnl
\gvac{1} \gbr \gcl{1} \gnl
\gvac{1} \gcl{1} \grmo \gcl{1} \gnl
\gvac{1} \gob{1}{\s F} \gob{1}{B} 
\gend\stackrel{nat.}{=}
\gbeg{4}{8}
\got{1}{B} \got{1}{B} \got{2}{F} \gnl
\gcl{3} \gcl{2} \gcn{1}{1}{2}{2} \gnl
\gvac{2} \gcmuf{1} \gnl
\gvac{1} \gbr \gcl{2} \gnl
\gbr \gcl{1} \gnl
\gcl{2} \gmu \gcn{1}{1}{1}{0} \gnl
\gvac{2} \hspace{-0,34cm} \grmo \gcl{1} \gnl
\gob{1}{\hspace{0,34cm} F} \gob{3}{B} 
\gend\stackrel{\equref{F mod alg}}{=}
\gbeg{5}{9}
\got{1}{B} \got{1}{B} \got{2}{F} \gnl
\gcl{3} \gcl{2} \gcn{1}{1}{2}{2} \gnl
\gvac{2} \gcmuf{1} \gnl
\gvac{1} \gbr \gcn{1}{1}{1}{2} \gnl
\gbr \gcl{1} \gcmuf{1} \gnl
\gcl{2} \gcl{1} \gbr \gcl{1} \gnl
\gvac{1} \grmo \gcl{1} \gvac{1} \grmo \gcl{1} \gnl
\gcl{1} \gwmu{3} \gnl
\gob{1}{F} \gob{3}{B} 
\gend\stackrel{coass. F}{\stackrel{nat.}{=}}
\gbeg{5}{10}
\got{1}{B} \gvac{1} \got{1}{B} \got{2}{F} \gnl
\gcl{1} \gvac{1} \gcl{1} \gcn{1}{1}{2}{2} \gnl
\gcl{1} \gvac{1} \gcl{1} \gcmuf{1} \gnl
\gcl{1} \gvac{1} \gbr \gcl{1} \gnl
\gcl{1} \gvac{1} \gcn{1}{1}{1}{0} \grmo \gcl{1} \gnl
\gcl{1} \gcmuf{1} \gvac{2} \gcl{3} \gnl
\gbr \gcl{1} \gnl
\gcl{1} \grmo \gcl{1} \gnl
\gcl{1} \gwmu{3} \gnl
\gob{1}{F} \gob{3}{B} 
\gend=
\gbeg{3}{5}
\got{1}{B}\got{1}{B}\got{1}{F}\gnl
\gcl{1} \glmpt \gnot{\hspace{-0,34cm}\psi} \grmptb \gnl
\glmptb \gnot{\hspace{-0,34cm}\psi} \grmptb \gcl{1} \gnl
\gcl{1} \gmu \gnl
\gob{1}{F} \gob{2}{B.}
\gend
$$
Since we used that 
$\gbeg{2}{2}
\gcmuf{1} \gnl
\gend$ of $F$ is coassociative, we need to consider $F$ a trivial left $B$-comodule by \equref{quasi coass. Delta_M}. 
Dually, for the compatibility of $\phi$ with the comultiplication of $B$ we need 
$\gbeg{2}{1}
\gmu \gnl
\gend$ associative and accordingly $F$ to be a trivial left $B$-module, by \equref{weak assoc. mu_M}. 
Observe that again we have used the intrinsic properties of a biwreath-like object and naturality of the braiding in $\C$ (with respect to the multiplication of $B$). 
Then we obtain that $F$ is a usual bialgebra in $\C$.

With $\psi$ and $\phi$ defined as above, because of the trivial left $B$-(co)module structures on $F$ and left $F$-(co)module structures on $B$, the relations 
\equref{mod alg}-\equref{mod alg unity}, \equref{comod coalg}-\equref{comod coalg counity} and \equref{F comod alg} -- \equref{F mod coalg counit} hold trivially 
and they do not provide any new information.

Though, by \equref{F mod alg} -- \equref{F mod alg unit} and \equref{F comod coalg} -- \equref{F comod coalg counit} $B$ is a right $F$-module algebra and 
a right $F$-comodule coalgebra in $\C$ (in the broader sense, without knowing if its right $F$-(co)module actions are proper). This was called ``$F$ measures $B$'' 
in \cite[Section 4.1]{Mont}, in the category of modules over a commutative ring. 
As a matter of fact, due to 
\equref{weak action} -- \equref{weak action unity}, \equref{weak coaction} -- \equref{normalized 2-cocycle} and \equref{2-cycle ro} -- \equref{normalized 2-cycle ro}, 
we have that there is a right {\em twisted action} of $F$ on $B$ by a normalized 2-cocycle $\sigma$ (and dually: 
there is a right {\em twisted coaction} of $F$ on $B$ by a normalized 2-cycle $\rho$):  
\begin{equation*} 
\gbeg{4}{11}
\got{1}{B} \got{2}{F} \got{1}{F} \gnl
\gcl{1} \gcn{2}{1}{2}{2} \gcl{3} \gnl
\gcl{1} \gcmuf{1} \gnl
\gbr \gcl{1} \gnl
\gcl{1} \grmo \gcl{1} \gcn{1}{1}{3}{2} \gnl
\gcl{1} \gcl{1} \gcmuf{1} \gnl
\gcl{1} \gbr \gcl{1} \gnl
\gcl{1} \gcl{1} \grmo \gcl{1} \gnl
\glmpt \gnot{\hspace{-0,34cm}\sigma} \grmptb \gcl{1} \gnl
\gvac{1} \gmu \gnl
\gob{4}{B}
\gend= 
\gbeg{3}{8}
\got{1}{B} \got{2}{F} \got{2}{F}\gnl
\gcl{1} \gcn{2}{1}{2}{2} \gcn{1}{1}{2}{2} \gnl
\gcl{1} \gcmuf{1} \gvac{2} \gcmuf{1}  \gnl
\gcl{1} \gcl{1} \gbr \gcl{1} \gnl
\gcn{1}{1}{1}{2} \gmu \glmpt \gnot{\hspace{-0,34cm}\sigma} \grmpt  \gnl
\gvac{1} \hspace{-0,22cm} \grmo \gcl{1} \gvac{2} \gcl{1} \gnl
\gvac{1} \gwmu{4} \gnl
\gob{6}{B}
\gend
\hspace{4cm}
\gbeg{2}{5}
\got{1}{B}  \gnl
\gcl{1} \gu{1}  \gnl
\grmo \gcl{1} \gnl
\gcl{1} \gnl
\gob{1}{B}
\gend=
\gbeg{3}{4}
\got{1}{B}  \gnl
\gcl{2} \gnl
\gob{1}{B}
\gend
\end{equation*}
$$\qquad\textnormal{twisted action} \hspace{4cm} \textnormal{twisted action unity} \vspace{-0,2cm} $$ 
$$\textnormal{ from \equref{weak action} and \equref{weak action unity}} $$

\begin{equation} \eqlabel{2-cocycle condition psi_1 caso 2}
\gbeg{6}{8}
\got{1}{F} \got{2}{F} \got{2}{F}\gnl
\gcl{1} \gcn{2}{1}{2}{2} \gcn{1}{1}{2}{2} \gnl
\gcl{1} \gcmuf{1} \gvac{2} \gcmuf{1} \gnl
\gcl{1} \gcl{1} \gbr \gcl{1} \gnl
\gcn{1}{1}{1}{2} \gmu \glmpt \gnot{\hspace{-0,34cm}\sigma} \grmptb  \gnl
\gvac{1} \hspace{-0,22cm} \glmpt \gnot{\hspace{-0,34cm}\sigma} \grmptb \gvac{1} \gcn{1}{1}{2}{1} \gnl
\gvac{2} \gwmu{3} \gnl
\gob{7}{B}
\gend=
\gbeg{6}{9}
\got{2}{F} \got{2}{F} \got{2}{F} \gnl
\gcn{2}{1}{2}{2} \gcn{2}{1}{2}{2} \gcn{1}{3}{2}{2} \gnl
\gcmuf{1} \gvac{2} \gcmuf{1} \gnl
\gcl{1} \gbr \gcl{1} \gnl
\gmu \glmpt \gnot{\hspace{-0,34cm}\sigma} \grmptb \gcmuf{1} \gnl
\gcn{1}{1}{2}{3} \gvac{2} \gbr \gcl{1} \gnl
\gvac{1} \glmpt \gnot{\sigma} \gcmpb \grmpt \grmo \gcl{1} \gnl
\gvac{2} \gwmu{3} \gnl
\gob{7}{B}
\gend
\hspace{2cm}
\gbeg{2}{5}
\got{3}{F}\gnl
\gu{1} \gcl{1} \gnl
\glmpt \gnot{\hspace{-0,34cm}\sigma} \grmptb \gnl
\gvac{1} \gcl{1} \gnl
\gob{3}{B}
\gend=
\gbeg{1}{4}
\got{1}{F}\gnl
\gcuf{1} \gnl
\gu{1} \gnl
\gob{1}{B}
\gend=
\gbeg{3}{5}
\got{1}{F}\gnl
\gcl{1} \gu{1} \gnl
\glmpt \gnot{\hspace{-0,34cm}\sigma} \grmptb \gnl
\gvac{1} \gcl{1} \gnl
\gob{3}{B}
\gend
\end{equation}
$$\textnormal{normalized Sweedler 2-cocycle} \vspace{-0,2cm} $$ 
$$\textnormal{ from \equref{2-cocycle condition} and \equref{normalized 2-cocycle}} $$

\begin{equation*} 
\gbeg{5}{7}
\got{4}{B} \gnl
\gwcm{4} \gnl 
\hspace{-0,12cm} \gcl{1} \hspace{-0,42cm} \glmf \gvac{1} \gcl{1} \gnl
\gcn{1}{1}{3}{2} \gvac{1} \hspace{-0,32cm} \gcmuf{1} \gvac{2} \glmpb \gnot{\hspace{-0,34cm}\rho} \grmpb  \gnl
\gvac{1} \gcl{1} \gcl{1} \gbr \gcl{1} \gnl
\gvac{1} \gcl{1} \gmu \gmu \gnl
\gvac{1} \gob{1}{B} \gob{2}{F} \gob{2}{F}\gnl
\gend
= 
\gbeg{5}{10}
\got{3}{B} \gnl
\gwcm{3} \gnl 
\glmptb \gnot{\hspace{-0,34cm}\rho} \grmpb \gcl{1} \gnl
\gcl{1} \gcl{1} \gcl{1} \hspace{-0,42cm} \glmf \gnl
\gvac{1} \gcl{1} \gbr \gcl{1} \gnl
\gvac{1} \gcl{1} \gcl{1} \gmu \gnl 
\gvac{1} \gcl{1} \gcl{1} \hspace{-0,42cm} \glmf \gcn{1}{1}{0}{1} \gnl
\gvac{2} \gbr \gcl{1} \gcl{2} \gnl
\gvac{2} \gcl{1} \gmu \gnl
\gvac{2} \gob{1}{B} \gob{2}{F} \gob{1}{F}\gnl
\gend \hspace{3cm}
\gbeg{2}{4}
\got{1}{B}  \gnl
\gcl{1} \hspace{-0,42cm} \glmf \gnl
\gvac{1} \gcl{1} \gcuf{1} \gnl
\gob{3}{B}
\gend=
\gbeg{1}{4}
\got{1}{B}  \gnl
\gcl{2} \gnl
\gob{1}{B}
\gend
\end{equation*}
$$\qquad\qquad\textnormal{twisted coaction} \hspace{4cm} \textnormal{twisted coaction counity} \vspace{-0,2cm} $$ 
$$\textnormal{ from \equref{weak coaction} and \equref{weak coaction counity}} $$

\begin{equation*} 
\gbeg{5}{8}
\got{5}{B} \gnl
\gvac{1} \gwcm{3} \gnl
\glmpb \gnot{\rho} \gcmpt \grmpb \gcl{1} \hspace{-0,42cm} \glmf \gnl
\gcn{1}{1}{3}{2} \gvac{2} \gbr \gcl{1} \gnl
\gcmuf{1} \gvac{2} \glmpb \gnot{\hspace{-0,34cm}\rho} \grmptb \gmu \gnl
\gcl{1} \gbr \gcl{1} \gcn{1}{2}{2}{2} \gnl
\gmu \gmu \gnl
\gob{2}{F} \gob{2}{F} \gob{2}{F} 
\gend=
\gbeg{5}{7}
\got{5}{B} \gnl
\gvac{1} \gwcm{3} \gnl
\glmpb \gnot{\hspace{-0,34cm}\rho} \grmpb \gvac{1} \gcn{1}{1}{1}{2} \gnl
\gcn{1}{1}{1}{0} \hspace{-0,22cm} \gcmuf{1} \gvac{2} \glmpb \gnot{\hspace{-0,34cm}\rho} \grmptb \gnl
\gcl{1} \gcl{1} \gbr \gcl{1} \gnl
\gcl{1} \gmu \gmu \gnl
\gob{1}{F} \gob{2}{F} \gob{2}{F} 
\gend \hspace{3cm}
\gbeg{2}{5}
\got{1}{B}  \gnl
\gcl{1} \gnl
\glmptb \gnot{\hspace{-0,34cm}\rho} \grmpb \gnl
\gcuf{1} \gcl{1} \gnl
\gob{3}{F}
\gend=
\gbeg{1}{4}
\got{1}{B}\gnl
\gcu{1} \gnl
\gu{1} \gnl
\gob{1}{F}
\gend=
\gbeg{3}{5}
\got{1}{B}\gnl
\gcl{1} \gnl
\glmptb \gnot{\hspace{-0,34cm}\rho} \grmpb \gnl
\gcl{1} \gcuf{1} \gnl
\gob{1}{F}
\gend
\end{equation*}
$$\textnormal{normalized 2-cycle} \vspace{-0,2cm} $$ 
$$\textnormal{ from \equref{2-cycle ro} and \equref{normalized 2-cycle ro}} $$
In the biproduct $F\times B$ the product is given by the wreath product and the coproduct by the cowreath coproduct \equref{wreath (co)product}, which in this case turn out 
to be Sweedler's crossed product and its dual construction: the crossed coproduct, the (co)unity is componentwise:
$$
\nabla_{FB}=
\gbeg{6}{10}
\gvac{1} \got{1}{F} \gvac{1} \got{1}{B} \got{2}{F} \got{1}{B}  \gnl
\gvac{1} \gcl{3} \gvac{1} \gcl{1} \gcn{2}{1}{2}{2} \gcl{4} \gnl
\gvac{3} \gcl{1} \gcmuf{1} \gnl
\gvac{3} \gbr \gcl{1} \gnl
\gvac{1} \gcn{1}{1}{1}{0} \gvac{1} \gcn{1}{1}{1}{0} \grmo \gcl{1} \gnl
\gcmuf{1} \gvac{2} \gcmuf{1} \gvac{2} \gwmu{3} \gnl
\gcl{1} \gbr \gcl{1} \gvac{1} \gcl{2} \gnl
\gmu \glmpt \gnot{\hspace{-0,34cm}\sigma} \grmptb  \gnl
\gcn{1}{1}{2}{2} \gvac{2} \gwmu{3} \gnl
\gob{2}{F} \gob{5}{B}
\gend\hspace{1cm}
\eta_{FB}=
\gbeg{2}{3}
\gu{1} \gu{1} \gnl
\gcl{1} \gcl{1} \gnl
\gob{1}{F} \gob{1}{B}
\gend; \hspace{1,5cm}
\Delta_{FB}=
\gbeg{6}{9}
\got{2}{F} \got{5}{B} \gnl
\gcn{1}{1}{2}{2} \gvac{2} \gwcm{3} \gnl
\gcmuf{1} \gvac{2} \glmpb \gnot{\hspace{-0,34cm}\rho} \grmptb \gvac{1} \gcl{2}  \gnl
\gcl{1} \gbr \gcl{1} \gvac{1} \gnl
\gmu \gmu \gwcm{3} \gnl
\gvac{1} \gcn{1}{1}{0}{1} \gvac{1} \gcn{1}{1}{0}{1} \gcl{1} \hspace{-0,42cm} \glmf \gcl{3} \gnl
\gvac{2} \gcl{1} \gvac{1} \gbr \gcl{1} \gnl
\gvac{2} \gcl{1} \gvac{1} \gcl{1} \gmu \gnl
\gvac{2} \gob{1}{F} \gvac{1} \gob{1}{B} \gob{2}{F} \gob{1}{B}  
\gend\hspace{1cm}
\Epsilon_{FB}=
\gbeg{2}{4}
\got{1}{F} \got{1}{B} \gnl
\gcl{1} \gcl{1} \gnl
\gcuf{1} \gcu{1} \gnl
\gend
$$

Observe that in our setting the 2-cocycle $\sigma$ is not invertible. 

\begin{ex} \exlabel{Sw}
For $\C=\R\x\Mod$, the category of modules over a commutative ring $R$, we recover the definition of a twisted $F$-action on $B$, ``$F$ measures $B$'' and of a (non-invertible!) 
normalized 2-cocycle $\sigma: F\ot F\to B$ from \cite[Definition 7.1.1, Lemma 7.1.2]{Mont}. These concepts were introduced in \cite{DoiTak, BCM}, generalizing \cite{Sw1}. 
Namely, if $\sigma$ is invertible, multiply the above formula for twisted action with $\sigma^{-1}$ from the left in the convolution algebra to obtain the usual formula. We also 
obtain the dual notions. In this sense, our construction generalizes the above notions to non-invertible 2-cocycles and to the setting of any braided monoidal category. 
\end{ex}

\subsection{Particular case: when $B=I$, or $\D=\C$} \sslabel{B=I}

If we set $B=I$ in the definition of a biwreath-like object, rather than obtaining the results of the previous subsection with $B=I$, that is, Sweedler's 
crossed product in $\C$ with a 2-cocycle $\sigma$ ``with trivial coefficients'', the 2-cocycle would actually turn out to be trivial. Namely, 
since $\eta_I=\Epsilon_I=id_I$, from the definition of 
$\gbeg{2}{1}
\gmu \gnl
\gend
$ it would follow: $\mu_M=
\gbeg{2}{1}
\gmu \gnl
\gend
$ and the 2-cocycle $\sigma$ would be trivial. For this reason, and because we can not defer $\mu_M$ from 
$\gbeg{2}{1}
\gmu \gnl
\gend
$ as we did before, so to deduce an expression for the former out of the identity \equref{lambda_M 8}, we are motivated to introduce the following:

\begin{defn} 
A {\em biwreath-like hybrid} is a 1-cell $F:\A\to\A$ in $\K$ equipped with the following structures: 
\begin{enumerate}
\item $(F, \gbeg{2}{1}
\gmu \gnl 
\gend, 
\gbeg{1}{1}
\gu{1} \gnl 
\gend, 
\gbeg{2}{1}
\gcmuf{1} \gnl 
\gend, 
\gbeg{1}{1}
\gcuf{1} \gnl 
\gend, 
\gbeg{2}{1}
\glmptb \gnot{\hspace{-0,34cm}\lambda} \grmptb \gnl 
\gend\hspace{0,12cm})
$ is a left bimonad in $\K$; 
\item $(F, \mu_M, \eta_M)$  is a monad in $\EM^M(\K)$ and $(F, \Delta_C, \Epsilon_C)$ is a comonad in $\EM^C(\K)$ both over the trivial bimonad $I$ in $\K$; 
\item the following compatibility conditions hold: 
$$\gbeg{1}{2}
\gbmp{\s\eta_M} \gnl
\gcl{1} \gnl
\gend=
\gbeg{1}{2}
\gu{1} \gnl
\gcl{1} \gnl
\gend, \qquad \qquad
\gbeg{1}{2}
\gcl{1} \gnl
\gbmp{\s\Epsilon_C} \gnl
\gend=
\gbeg{1}{2}
\gcl{1} \gnl
\gcuf{1} \gnl
\gend
$$ 

\begin{equation}\eqlabel{Majid lambda_M 8}
\gbeg{2}{4}
\got{1}{F} \got{1}{F} \gnl
\glmptb \gnot{\hspace{-0,34cm}\mu_M} \grmpt \gnl
\glmptb \gnot{\hspace{-0,34cm}\Delta_C} \grmpb \gnl
\gob{1}{F} \gob{1}{F} \gnl
\gend=
\gbeg{5}{5}
\got{1}{F} \got{1}{F} \gnl
\gcl{1} \glmptb \gnot{\hspace{-0,34cm}\Delta_C} \grmpb \gnl
\glmptb \gnot{\hspace{-0,34cm}\lambda} \grmptb \gcl{1} \gnl
\gcl{1} \glmptb \gnot{\hspace{-0,34cm}\mu_M} \grmpt \gnl
\gob{1}{F} \gob{1}{F.} \gnl
\gend 
\end{equation}
\end{enumerate}
\end{defn}

Observe that in the point 2) in the above definition we may take: $\psi=\phi=\id_F$, then by the identities \equref{monad law mu_M}-\equref{monad law eta_M} and 
\equref{comonad law Delta_C}-\equref{comonad law Epsilon_C} $F$ has associative and unital multiplication 
$
\gbeg{2}{1}
\glmptb \gnot{\hspace{-0,34cm}\mu_M} \grmpt \gnl
\gend$ with 
$\gbeg{1}{2}
\gbmp{\s\eta_M} \gnl
\gcl{1} \gnl
\gend$ and coassociative and counital comultiplication 
$
\gbeg{2}{1}
\glmptb \gnot{\hspace{-0,34cm}\Delta_C} \grmpb \gnl
\gend$ with 
$\gbeg{1}{2}
\gcl{1} \gnl
\gbmp{\s\Epsilon_C} \gnl
\gend\hspace{0,1cm}$. Set 
$
\gbeg{2}{2}
\got{1}{F} \got{1}{F} \gnl
\glmpt \gnot{\hspace{-0,34cm}\sigma} \grmpt \gnl
\gend:=
\gbeg{2}{3}
\got{1}{F} \got{1}{F} \gnl
\glmptb \gnot{\hspace{-0,34cm}\mu_M} \grmpt \gnl
\gcuf{1} \gnl
\gend \hspace{0,1cm}$ and 
$\gbeg{2}{2}
\glmpb \gnot{\hspace{-0,34cm}\rho} \grmpb \gnl
\gob{1}{F}\gob{1}{F}
\gend:=
\gbeg{2}{3}
\gu{1} \gnl
\glmptb \gnot{\hspace{-0,34cm}\Delta_C} \grmpb \gnl
\gob{1}{F}\gob{1}{F}
\gend$\hspace{0,1cm}. We further have the identities \equref{2-cocycle condition} -- \equref{normalized 2-cycle ro} (of which the first two and the last two are 
obtained in a different way than there), which come down to:

\pagebreak

\begin{equation}\eqlabel{2-cocycle condition I}
\gbeg{3}{5}
\got{1}{F} \got{1}{F} \got{1}{F}\gnl
\gcl{1} \glmptb \gnot{\hspace{-0,34cm}\mu_M} \grmpt \gnl
\glmpt \gnot{\hspace{-0,34cm}\sigma} \grmpt \gnl
\gend=
\gbeg{3}{5}
\got{1}{F} \got{1}{F} \got{1}{F} \gnl
\glmpt \gnot{\hspace{-0,34cm}\mu_M} \grmptb \gcl{1} \gnl
\gvac{1} \glmpt \gnot{\hspace{-0,34cm}\sigma} \grmpt \gnl
\gend \hspace{4cm}
\gbeg{2}{4}
\got{3}{F}\gnl
\gu{1}  \gcl{1} \gnl
\glmpt \gnot{\hspace{-0,34cm}\sigma} \grmpt \gnl
\gend=
\gbeg{1}{4}
\got{1}{F}\gnl
\gcl{1} \gnl
\gcuf{1} \gnl
\gend=
\gbeg{3}{4}
\got{1}{F}\gnl
\gcl{1} \gu{1} \gnl
\glmpt \gnot{\hspace{-0,34cm}\sigma} \grmpt \gnl
\gend
\end{equation} \vspace{-1cm}
$$\textnormal{  2-cocycle condition \hspace{3cm}  normalized 2-cocycle} $$
$$\textnormal{  \quad from monad law for $\mu_M$ \equref{monad law mu_M} \hspace{2cm} from monad law for $\eta_M$ \equref{monad law eta_M}}$$


\begin{equation} \eqlabel{2-cycle ro I}
\gbeg{3}{3}
\gvac{1} \glmpb \gnot{\hspace{-0,34cm}\rho} \grmpb \gnl
\glmpb \gnot{\hspace{-0,34cm}\Delta_C} \grmptb \gcl{1} \gnl
\gob{1}{F}\gob{1}{F}  \gob{1}{F}
\gend=
\gbeg{3}{3}
\glmpb \gnot{\hspace{-0,34cm}\rho} \grmpb \gnl
\gcl{1} \glmptb \gnot{\hspace{-0,34cm}\Delta_C} \grmpb \gnl
\gob{1}{F}\gob{1}{F}  \gob{1}{F}
\gend \hspace{4cm}
\gbeg{2}{3}
\glmpb \gnot{\hspace{-0,34cm}\rho} \grmpb \gnl
\gcuf{1} \gcl{1} \gnl
\gob{3}{F}
\gend=
\gbeg{1}{3}
\gu{1} \gnl
\gcl{1} \gnl
\gob{1}{F}
\gend=
\gbeg{3}{3}
\glmpb \gnot{\hspace{-0,34cm}\rho} \grmpb \gnl
\gcl{1} \gcuf{1} \gnl
\gob{1}{F} \gnl
\gend
\end{equation}
$$\textnormal{ 2-cycle condition for $\rho$ \hspace{3cm}  normalized 2-cycle $\rho$} $$
$$\textnormal{ \qquad from comonad law for $\Delta_C$ \equref{comonad law Delta_C}  \qquad\qquad from comonad law for $\Epsilon_C$ \equref{comonad law Epsilon_C}}$$


\bigskip

\bigskip

Let us now consider $\K=\hat\C$ with $\lambda$ defined by \equref{left lambda}. 
If we apply to \equref{Majid lambda_M 8} on the one hand 
$F\gbeg{1}{2}
\got{1}{F} \gnl
\gcuf{1} \gnl
\gend$ from below, and on the other  
$F \gbeg{1}{2}
\gu{1} \gnl
\gob{1}{F} \gnl
\gend$ from above, 
we get: 
$$\mu_M=
\gbeg{3}{6}
\got{2}{F} \got{1}{F} \gnl
\gcn{2}{1}{2}{2} \gcl{1} \gnl
\gcmuf{1} \gvac{2} \glmptb \gnot{\hspace{-0,34cm}\Delta_C} \grmpb  \gnl
\gcl{1} \gbr \gcl{1} \gnl
\gmu \glmpt \gnot{\hspace{-0,34cm}\sigma} \grmpt  \gnl
\gob{2}{F}
\gend \hspace{2cm}\textnormal{and} \hspace{2cm}
\Delta_C=
\gbeg{3}{6}
\got{2}{F} \gnl
\gcn{1}{1}{2}{2} \gnl
\gcmuf{1} \gvac{2}  \glmpb \gnot{\hspace{-0,34cm}\rho} \grmpb  \gnl
\gcl{1} \gbr \gcl{1} \gnl
\gmu \glmptb \gnot{\hspace{-0,34cm}\mu_M} \grmpt \gnl
\gob{2}{F} \gob{1}{F}
\gend
$$
respectively. We distinguish the following three cases: 
\begin{enumerate}
\item if 
$
\gbeg{2}{1}
\glmptb \gnot{\hspace{-0,34cm}\mu_M} \grmpt \gnl
\gend=
\gbeg{2}{1}
\gmu \gnl
\gend
$ and  
$
\gbeg{2}{1}
\glmptb \gnot{\hspace{-0,34cm}\Delta_C} \grmpb \gnl
\gend=
\gbeg{2}{1}
\gcmuf{1} \gnl
\gend,
$ we have that $\sigma$ and $\rho$ are trivial and the above data is merely a bialgebra $F$ in $\C$; 
\item if $
\gbeg{2}{1}
\glmptb \gnot{\hspace{-0,34cm}\Delta_C} \grmpb \gnl
\gend=
\gbeg{2}{1}
\gcmuf{1} \gnl
\gend,
$ but 
$\gbeg{2}{1}
\glmptb \gnot{\hspace{-0,34cm}\mu_M} \grmpt \gnl
\gend\not=
\gbeg{2}{1}
\gmu \gnl
\gend,
$ we have \equref{mu_m con sigma I}, which by \equref{2-cocycle condition I} delivers a normalized 2-cocycle \equref{right 2-coc*}: 
\begin{center} \hspace{-1,4cm}
\begin{tabular} {p{6cm}p{1cm}p{7cm}}
\begin{equation} \eqlabel{mu_m con sigma I}
\mu_M=
\gbeg{3}{6}
\got{2}{F} \got{2}{F} \gnl
\gcn{2}{1}{2}{2} \gcn{1}{1}{2}{2} \gnl
\gcmuf{1} \gvac{2} \gcmuf{1}  \gnl
\gcl{1} \gbr \gcl{1} \gnl
\gmu \glmpt \gnot{\hspace{-0,34cm}\sigma} \grmpt  \gnl
\gob{2}{F}
\gend
\end{equation} & &  \vspace{-1cm}
\begin{equation}\eqlabel{right 2-coc*}
\gbeg{5}{7}
\got{1}{F} \got{2}{F} \got{2}{F}\gnl
\gcl{1} \gcn{2}{1}{2}{2} \gcn{1}{1}{2}{2} \gnl
\gcl{1}  \gcmuf{1} \gvac{2} \gcmuf{1} \gnl
\gcl{1} \gcl{1} \gbr \gcl{1} \gnl
\gcn{1}{1}{1}{2} \gmu \glmpt \gnot{\hspace{-0,34cm}\sigma} \grmpt  \gnl
\gvac{1} \hspace{-0,24cm} \glmpt \gnot{\hspace{-0,34cm}\sigma} \grmpt \gnl
\gend=
\gbeg{5}{8}
\got{2}{F} \got{2}{F} \got{1}{F}\gnl
\gcn{2}{1}{2}{2} \gcn{2}{1}{2}{2} \gcl{4} \gnl
\gcmuf{1} \gvac{2} \gcmuf{1} \gvac{2} \gnl
\gcl{1} \gbr \gcl{1} \gnl
\gmu \glmpt \gnot{\hspace{-0,34cm}\sigma} \grmpt  \gnl
\gcn{1}{1}{2}{3} \gvac{2} \gcn{1}{1}{3}{1} \gnl
\gvac{1} \glmpt \gnot{\sigma} \gcmp \grmpt \gnl
\gend
\end{equation}
\end{tabular}
\end{center}
\item if $
\gbeg{2}{1}
\glmptb \gnot{\hspace{-0,34cm}\mu_M} \grmpt \gnl
\gend=
\gbeg{2}{1}
\gmu \gnl
\gend,
$ but 
$
\gbeg{2}{1}
\glmptb \gnot{\hspace{-0,34cm}\Delta_C} \grmpb \gnl
\gend\not=
\gbeg{2}{1}
\gcmuf{1} \gnl
\gend,
$ we have \equref{Delta_C con ro I}, which by \equref{2-cycle ro I} delivers a normalized 2-cycle \equref{Drinfeld's twist}: 
\begin{center} 
\begin{tabular} {p{5cm}p{1cm}p{7cm}}
\begin{equation} \eqlabel{Delta_C con ro I}
\Delta_C=
\gbeg{3}{6}
\got{2}{F} \gnl
\gcn{1}{1}{2}{2} \gnl
\gcmuf{1} \gvac{2} \glmpb \gnot{\hspace{-0,34cm}\rho} \grmpb  \gnl
\gcl{1} \gbr \gcl{1} \gnl
\gmu \gmu  \gnl
\gob{2}{F} \gob{2}{F}
\gend
\end{equation} & & \vspace{-1cm}
\begin{equation} \eqlabel{Drinfeld's twist}
\gbeg{6}{5}
\gvac{1}  \glmpb \gnot{\hspace{-0,34cm}\rho} \grmpb \gnl
\gvac{1} \hspace{-0,24cm} \gcn{1}{1}{2}{1} \gcmuf{1} \gvac{2} \glmpb \gnot{\hspace{-0,34cm}\rho} \grmpb  \gnl
\gvac{1} \gcl{1} \gcl{1} \gbr \gcl{1} \gnl
\gvac{1} \gcl{1}  \gmu \gmu \gnl
\gvac{1} \gob{1}{F} \gob{2}{F} \gob{2}{F}\gnl
\gend=
\gbeg{5}{6}
\gvac{1} \glmpb \gnot{\rho} \gcmp \grmpb \gnl
\gcn{1}{1}{3}{2} \gvac{2} \gcn{1}{1}{1}{3} \gnl
\gcmuf{1} \gvac{2} \glmpb \gnot{\hspace{-0,34cm}\rho} \grmpb \gcl{3}  \gnl
\gcl{1} \gbr \gcl{1} \gnl
\gmu \gmu \gnl
\gob{2}{F} \gob{2}{F} \gob{1}{F}\gnl
\gend
\end{equation}
\end{tabular}
\end{center}
\end{enumerate}

\begin{ex} \exlabel{Drinfeld}
The identity \equref{Drinfeld's twist} recovers the Drinfel'd twist, and \equref{right 2-coc*} its dual version, known in $\C=Vec$ from \cite{Maj5}. Moreover, 
\equref{right 2-coc*} coincides with Sweedler's 2-cocycle \equref{2-cocycle condition psi_1 caso 2} for $B=I$. We have studied them in \cite[Section 4.1]{th}. 
\end{ex}

Recall that $\mu_M$ is associative and $\Delta_C$ is coassociative. Then from the structure of a biwreath-like hybrid $F$ we deduce the following known fact (at least a) was known, \cite{th}):

\begin{cor}
Let $F$ be a bialgebra in $\C$. 
\begin{enumerate} [a)]
\item If $\sigma: FF\to I$ is a Sweedler's 2-cocycle in $\C$, then $\mu_M:FF\to F$ defined by \equref{mu_m con sigma I} is an associative multiplication on $F$. 
\item If $\rho: I\to FF$ is a Drinfel'd twist in $\C$, then $\Delta_C:F\to FF$ defined by \equref{Delta_C con ro I} is a coassociative comultiplication on $F$. 
\end{enumerate}
\end{cor}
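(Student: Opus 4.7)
My plan for part (a) is a direct string-diagram verification of the associativity equation $\mu_M\circ(\mu_M\otimes\mathrm{id}_F)=\mu_M\circ(\mathrm{id}_F\otimes\mu_M)$ starting from the explicit formula \equref{mu_m con sigma I}. After expansion, both sides become diagrams involving three copies of $\Delta_F$, a braiding pattern, a $\mu_F$-tree and two occurrences of $\sigma$. The steps I would take, in order, are: use coassociativity of $\Delta_F$ to align the coproduct trees; use the bialgebra axiom \equref{bialg} to push $\Delta_F$ through $\mu_F$; invoke naturality of the braiding to reposition the tensor factors so that the two $\sigma$-legs sit adjacent; associate the remaining $\mu_F$'s so that a single triple multiplication remains; and finally apply the 2-cocycle condition \equref{right 2-coc*}, which is precisely the identity equating the two resulting $\sigma$-configurations.

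Part (b) is entirely dual: expand \equref{Delta_C con ro I} on both sides of the coassociativity equation for $\Delta_C$, push multiplications past comultiplications via the bialgebra axiom, reorganise braidings by naturality, and close using the 2-cycle identity \equref{Drinfeld's twist} on $\rho$.

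A more conceptual alternative is available. Given a Sweedler 2-cocycle $\sigma$ on a bialgebra $F$, one checks that the data $(F,\mu_M,\eta_F,\Delta_F,\Epsilon_F,\lambda)$ with $\lambda$ defined by \equref{left lambda} and $\Delta_C$ chosen canonical assembles into a biwreath-like hybrid over $B=I$; the monad law for $\mu_M$ in $\EM^M(\hat\C)$ then holds automatically and, when unpacked, is precisely \equref{2-cocycle condition I}, which yields the associativity sought in part (a). Part (b) is analogous, with canonical $\mu_M$ and $\Delta_C$ twisted by $\rho$, appealing instead to the comonad law in $\EM^C(\hat\C)$ and the 2-cycle condition \equref{2-cycle ro I}.

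The main obstacle is the combinatorial bookkeeping of braidings and coproducts needed to reduce both sides of the (co)associativity equation to a shape where the 2-(co)cycle identity applies verbatim; conceptually, however, the content is simply the classical Sweedler/Drinfel'd twist argument transported to an arbitrary braided monoidal category through the naturality of the braiding.
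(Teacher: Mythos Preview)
Your direct string-diagram verification (the first approach) is correct and is the classical argument; it is essentially what the paper is relying on, although the paper does not write out an explicit proof for this corollary --- it simply records the statement as a known fact recovered by the biwreath-like hybrid framework, citing \cite{th}.

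Your second, conceptual alternative contains a circularity you should be aware of. To verify that the data assembles into a biwreath-like hybrid over $B=I$ one must in particular check that $(F,\mu_M,\eta_M)$ is a monad in $\EM^M(\hat\C)$; with $\psi=\id_F$ the monad law \equref{monad law mu_M} is literally the associativity of $\mu_M$, which is precisely the statement you are trying to prove. Moreover, the monad law is not ``precisely \equref{2-cocycle condition I}'': that identity is \emph{obtained from} associativity by post-composing with the counit (this is how \equref{2-cocycle condition} is extracted from \equref{monad law mu_M} in the paper), so it is a priori strictly weaker. The actual content of the corollary is that, for $\mu_M$ of the specific shape \equref{mu_m con sigma I}, the 2-cocycle identity \equref{right 2-coc*} on $\sigma$ is \emph{sufficient} to force full associativity --- and establishing that implication is exactly your first, computational approach, not the second. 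The same remarks apply dually to part (b).
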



\section{Mixed biwreath-like objects} \selabel{mixed bl objects}  

We study now the mixed (co)wreaths from the point of view of a biwreath.

\begin{defn} \delabel{bl mixed}
A mixed biwreath-like object in $\K$ is a monad $(F, \mu_C, \eta_C)$ in $\EM^C(\K)$ and a comonad $(F, \Delta_M, \Epsilon_M)$ in $\EM^M(\K)$ over the same bimonad $B$ in $\K$ 
with the canonical restrictions: 
$$
\gbeg{2}{3}
\got{1}{F} \got{1}{F} \gnl
\gmuf{1} \gnl
\gob{2}{F} \gnl
\gend:=
\gbeg{3}{4}
\got{1}{F} \got{1}{F} \gnl
\gcl{1} \gcl{1} \gu{1} \gnl
\glmpt \gnot{\mu_C} \gcmptb \grmpt \gnl
\gob{3}{F} \gnl
\gend \hspace{2cm}
\gbeg{1}{4}
\got{1}{} \gnl
\guf{1} \gnl
\gcl{1} \gnl
\gob{1}{F} \gnl
\gend:=
\gbeg{1}{4}
\got{1}{} \gnl
\gu{1} \gnl
\gbmp{\eta_C} \gnl
\gob{1}{F} \gnl
\gend \hspace{2cm}
\gbeg{2}{3}
\got{2}{F} \gnl
\gcmu \gnl
\gob{1}{F} \gob{1}{F} \gnl
\gend:=
\gbeg{3}{4}
\got{3}{F} \gnl
\glmpb \gnot{\Delta_M} \gcmptb \grmpb \gnl
\gcl{1} \gcl{1} \gcu{1} \gnl
\gob{1}{F} \gob{1}{F} \gnl
\gend \hspace{2cm}
\gbeg{1}{3}
\got{1}{F} \gnl
\gcl{1} \gnl
\gcu{1} \gnl
\gend:=
\gbeg{1}{3}
\got{1}{F} \gnl
\gbmp{\Epsilon_{M}} \gnl
\gcu{1} \gnl
\gob{1}{} \gnl
\gend
$$
equipped with a 2-cell $\lambda:FF\to FF$ in $\K$ so that: 
\begin{enumerate}[(a)]
\item $\lambda$ is a distributive law in the sense of \deref{bimonad} b); 
\item the following compatibility conditions are fulfilled: 
\begin{equation*} 
\gbeg{2}{3}
\got{1}{F} \got{1}{F} \gnl
\gcl{1} \gcl{1} \gnl
\gcu{1} \gcu{1} \gnl
\gend=
\gbeg{2}{3}
\got{1}{F} \got{1}{F} \gnl
\gmuf{1} \gnl
\gvac{1} \hspace{-0,34cm} \gcu{1} \gnl
\gend \hspace{2cm}
\gbeg{1}{4}
\got{2}{}  \gnl
 \guf{1} \gnl
\hspace{-0,34cm} \gcmu \gnl
\gob{1}{F} \gob{1}{F} \gnl
\gend=
\gbeg{2}{4}
\got{1}{} \gnl
\guf{1} \guf{1} \gnl
\gcl{1} \gcl{1} \gnl
\gob{1}{F} \gob{1}{F}
\gend  \hspace{2cm}
\gbeg{1}{2}
\guf{1} \gnl
\gcu{1} \gnl
\gob{2}{} \gnl
\gend=
\Id_{id_{\A}}
\end{equation*}
and
\begin{center} \hspace{-0,2cm}
\begin{tabular}{p{6cm}p{0cm}p{6cm}}
\begin{equation} \eqlabel{lambda_M 8*}
\gbeg{3}{5}
\gvac{2} \got{1}{\hspace{-0,4cm}F} \got{1}{\hspace{-0,4cm}F} \gnl
\gvac{2} \hspace{-0,34cm} \gmuf{1} \gnl
\gvac{1} \hspace{-0,34cm} \glmpb \gnot{\Delta_M} \gcmpb \grmptb \gnl
\gvac{1} \gcl{1} \gcl{1} \gcl{1} \gnl
\gvac{1} \gob{1}{F} \gob{1}{F} \gob{1}{B} \gnl
\gend=
\gbeg{4}{5}
\got{1}{F} \got{1}{F} \gnl
\gcl{1} \glmptb \gnot{\Delta_M} \gcmpb \grmpb \gnl
\glmptb \gnot{\hspace{-0,34cm}\lambda} \grmptb \gcl{1} \gcl{2} \gnl
\gcl{1} \gmuf{1} \gnl
\gob{1}{F} \gob{2}{F} \gob{1}{B} \gnl
\gend 
\end{equation} & & 
\begin{equation} \eqlabel{lambda_C 8*}
\gbeg{3}{5}
\got{1}{F} \got{1}{F} \got{1}{B} \gnl
\gcl{1} \gcl{1} \gcl{1} \gnl
\glmpt \gnot{\mu_C} \gcmpt \grmptb \gnl
\gvac{2} \hspace{-0,22cm} \gcmu \gnl
\gvac{2} \gob{1}{F} \gob{1}{F} \gnl
\gend
=
\gbeg{4}{5}
\got{1}{F} \got{2}{F} \got{1}{B} \gnl
\gcl{1} \gcmu \gcl{2} \gnl
\glmptb \gnot{\hspace{-0,34cm}\lambda} \grmptb \gcl{1} \gnl
\gcl{1} \glmptb \gnot{\mu_C} \gcmpt \grmpt \gnl
\gob{1}{F} \gob{1}{F} \gnl
\gend
\end{equation}
\end{tabular}
\end{center}
\end{enumerate}
\end{defn}

Applying $\Epsilon_B$ to \equref{lambda_M 8*} (or applying $\eta_B$ to \equref{lambda_C 8*}), one gets \equref{bimonad in K*}, 
then $F$ is a bimonad in $\K$ with possibly non-(co)associative (co)multiplication. Applying 
$F
\gbeg{1}{2}
\guf{1} \gnl
\gob{1}{F}
\gend$ to \equref{lambda_M 8*} and 
$F
\gbeg{1}{2}
\got{1}{F}\gnl
\gcu{1} \gnl
\gnl
\gend$ to \equref{lambda_C 8*}, similarly as in \equref{Delta_m con Fi-lambda} and \equref{mu_C con omega}, we obtain $\Delta_M$ and $\mu_C$ below: 
\begin{center} \hspace{-1,3cm} 
\begin{tabular}{p{4.5cm}p{0,7cm}p{5cm}p{2cm}p{5cm}}
\begin{equation} \eqlabel{bimonad in K*}
\gbeg{2}{4}
\got{1}{F} \got{1}{F} \gnl
\gmuf{1} \gnl
\gcmu \gnl
\gob{1}{F}\gob{1}{F} 
\gend=
\gbeg{3}{5}
\got{1}{F} \got{2}{F} \gnl
\gcl{1} \gcmu \gnl
\glmptb \gnot{\hspace{-0,34cm}\lambda} \grmptb  \gcl{1} \gnl
\gcl{1} \gmuf{1} \gnl
\gob{1}{F}  \gob{2}{F}
\gend
\end{equation} & & 
\begin{equation*}
\Delta_M=
\gbeg{4}{5}
\got{1}{F} \gnl
\gcl{1} \gcn{1}{1}{2}{1} \gelt{\s\Phi_{\lambda}} \gcn{1}{1}{0}{1} \gnl  
\glmptb \gnot{\hspace{-0,34cm}\lambda} \grmptb \gcl{1} \gcl{2} \gnl
\gcl{1} \gmuf{1} \gnl
\gob{1}{F} \gob{2}{F} \gob{1}{B} \gnl
\gend
\hspace{2cm}
\mu_C=
\gbeg{4}{5}
\got{1}{F} \got{2}{F} \got{1}{B} \gnl
\gcl{1} \gcmu \gcl{2} \gnl
\glmptb \gnot{\hspace{-0,34cm}\lambda} \grmptb \gcl{1} \gnl
\gcl{1} \gcn{1}{1}{1}{2} \gelt{\omega} \gcn{1}{1}{1}{0} \gnl   
\gnl
\gob{1}{F} \gnl
\gend
\end{equation*}
\end{tabular}
\end{center}
where
\begin{center} 
\begin{tabular}{p{5cm}p{0cm}p{5cm}}
\begin{equation} \eqlabel{fi-lambda 3}
\Phi_{\lambda}:=
\gbeg{3}{4}
\gvac{1} \guf{1} \gnl
\glmpb \gnot{\Delta_M} \gcmptb \grmpb \gnl
\gcl{1} \gcl{1} \gcl{1} \gnl
\gob{1}{F} \gob{1}{F} \gob{1}{B} \gnl
\gend 
\end{equation} & &
\begin{equation} \eqlabel{omega 3}
\omega:=
\gbeg{2}{4}
\got{1}{F} \got{1}{B} \got{1}{B} \gnl
\gcl{1} \gcl{1} \gcl{1} \gnl
\glmpt \gnot{\mu_C} \gcmptb \grmpt \gnl
\gvac{1} \gcu{1} \gnl
\gend
\end{equation} 
\end{tabular}
\end{center}

\subsection{Case three: $\D_3={}_F ^F\YD(\C)$ with ``alternative'' quasi (co)actions } \sslabel{Case 3 alt}

For a mixed biwreath-like object in $\K=\hat\C$ with $\lambda$ given by \equref{left lambda*} we have \equref{Delta_M alt} and \equref{mu_C alt}: 
\begin{center} \hspace{-1,4cm}
\begin{tabular}{p{3.6cm}p{0cm}p{5cm}p{0cm}p{5cm}}
\begin{equation} \eqlabel{left lambda*}
\lambda=
\gbeg{3}{5}
\got{2}{F} \got{1}{F} \gnl
\gcmu \gcl{1} \gnl  %
\gcl{1} \gbr  \gnl
\gmuf{1} \gcl{1} \gnl
\gob{2}{F} \gob{1}{F}
\gend
\end{equation} & & \vspace{-0,6cm}
\begin{equation} \eqlabel{Delta_M alt}
\Delta_M=
\gbeg{4}{6}
\got{2}{F} \gnl
\gcmu \gcn{1}{1}{2}{1} \gelt{\s\Phi_{\lambda}} \gcn{1}{1}{0}{1}  \gnl 
\gcl{1} \gbr \gcl{1} \gcl{3} \gnl
\gmuf{1} \gmuf{1} \gnl
\gcn{2}{1}{2}{2} \gcn{2}{1}{2}{2} \gnl
\gob{2}{F} \gob{2}{F} \gob{1}{B} \gnl
\gend
\end{equation} & & \vspace{-0,4cm}
\begin{equation} \eqlabel{mu_C alt}
\mu_C=
\gbeg{4}{5}
\got{2}{F} \got{2}{F} \got{1}{B} \gnl
\gcmu \gcmu \gcl{2} \gnl 
\gcl{1} \gbr \gcl{1}  \gnl
\gmuf{1} \gcn{1}{1}{1}{2} \gelt{\omega} \gcn{1}{1}{1}{0} \gnl
\gnl
\gob{2}{F.} \gnl
\gend
\end{equation}
\end{tabular}
\end{center}
Now consider the category $\D_3={}_F ^F\YD(\C)$ of left-left Yetter-Drinfel'd modules over $F$ in $\C$ with a pre-braiding, 
$\psi=d^3_{B,F}$ and $\phi=d^3_{F,B}$ given as below:
$$d^3_{X,Y}=
\gbeg{3}{5}
\got{1}{} \got{1}{X} \got{1}{Y} \gnl
\glcm \gcl{1} \gnl
\gcl{1} \gbr \gnl
\glm \gcl{1} \gnl
\gob{1}{} \gob{1}{Y} \gob{1}{X} 
\gend
\hspace{1,5cm}
\psi=
\gbeg{3}{5}
\got{3}{B} \got{-1}{F} \gnl
\grmo \gvac{1} \gcl{1} \gcl{1} \gnl
\gcl{1} \gbr \gnl
\gmuf{1} \gcl{1} \gnl
\gob{2}{F} \gob{1}{B} 
\gend 
\hspace{1,5cm}
\phi= 
\gbeg{3}{5}
\got{2}{F} \got{1}{B} \gnl
\gcmu \gcl{1} \gnl
\gcl{1} \gbr \gnl
\glmf \gcn{1}{1}{-1}{-1} \gcn{1}{1}{-1}{-1} \gnl
\gob{3}{B} \gob{-1}{F.} 
\gend
$$
Then similarly as in \equref{left F-mod} and \equref{left F-comod}, for the left $F$-(co)actions on $B$ we have: 
$$\gbeg{2}{3}
\got{3}{B} \gnl
\grmo \gvac{1} \gcl{1} \gnl
\gob{1}{F} \gob{1}{B}
\gend=
\gbeg{2}{4}
\got{1}{B} \gnl
\gcl{1} \guf{1} \gnl
\glmptb \gnot{\hspace{-0,34cm}\psi} \grmptb \gnl
\gob{1}{F} \gob{1}{B} 
\gend
\hspace{4cm} 
\gbeg{2}{3}
\got{1}{F} \got{1}{B} \gnl
\glmf \gcn{1}{1}{-1}{-1} \gnl
\gob{3}{B}
\gend=
\gbeg{2}{4}
\got{1}{F} \got{1}{B} \gnl
\glmptb \gnot{\hspace{-0,34cm}\phi} \grmptb \gnl
\gcl{1} \gcu{1} \gnl
\gob{1}{B}  
\gend
$$
The structure of the mixed biwreath-like object on $F$ with the above $\psi, \phi$ and $\lambda$ is governed by the axioms of a biwreath that involve the mixed wreath and the mixed cowreath. 
Similarly as in \ssref{ex psi_1} and \ssref{ex psi_2} we get that $F$ is a trivial left $B$-(co)module and that $B$ is a trivial right $F$-(co)module. Then from \equref{quasi assoc. mu_C} 
and \equref{quasi coass. Delta_M} we get that the involved pre-(co)multiplications of $F$ are (co)associative, so $F$ is a usual bialgebra in $\C$. 
Also the axiom \equref{psi laws for B} for the multiplication is fulfilled -- this computation is completely analogous (the diagrams in the computation 
have a similar form) to the one we did in \ssref{ex psi_1} with the following differences: where we used the bialgebra property of $B$ now we use the left $F$-comodule algebra 
property \equref{F comod alg} of $B$; where we used that $F$ is a left $B$-module now we use that 
$\gbeg{2}{1}
\gmuf{1} \gnl
\gend$ of $F$ is associative. The unit axiom in \equref{psi laws for B} is clearly fulfilled and the axioms \equref{phi laws for B} hold by the duality of the mixed biwreath-like construction.

Since the left $B$-(co)module structures on $F$ are trivial, the identities \equref{mod coalg} -- \equref {comod alg unity} are trivially satisfied (they do not deliver any new information). 
The same happens with \equref{F mod alg} -- \equref{F mod alg unit} and \equref{F comod coalg} -- \equref{F comod coalg counit} because the right $F$-(co)module structures of $B$ are trivial. 
Though, the equations \equref{F comod alg} -- \equref{F mod coalg counit} say that $B$ is a left $F$-comodule algebra and a left $F$-module coalgebra in $\C$, again in a broader sense, 
we still do not know if it has proper structures of a left $F$-(co)module. 

From \equref{quasi coaction} -- \equref{dual quasi action unity} we obtain that $B$ is a left {\em alternative quasi $F$-comodule} and a left {\em alternative quasi $F$-module}. 
In the next subsection we will analyze a similar structure without the adjective ``alternative'' and there we will explain the motivation for our terminology and notation. 
The coaction in \equref{quasi coaction} is twisted by $\Phi_{\lambda}$ which by \equref{3-cocycle cond fi-lambda} -- \equref{normalized 3-cocycle fi-lambda} is a normalized 3-cocycle, 
and the action in \equref{dual quasi action} is twisted by $\omega$ which by \equref{3-cycle omega} -- \equref{normalized 3-cycle omega} is a normalized 3-cycle. Here the name 3-(co)cycle 
is not completely correct, we will explain this later, too. Putting \equref{Delta_M alt} and \equref{mu_C alt} and the above $\psi$ and $\phi$ in \equref{quasi coaction} -- \equref{dual quasi action unity}, 
\equref{3-cycle omega} -- \equref{normalized 3-cocycle fi-lambda} we obtain: \vspace{-0,4cm}
\begin{center} \hspace{-1,4cm} 
\begin{tabular} {p{8.8cm}p{2cm}p{4cm}} 
\begin{equation} \eqlabel{quasi coaction case 3 alt} 
\gbeg{6}{8}
\got{11}{B} \gnl
\gvac{4} \grmo \gvac{1} \gcl{1} \gnl  
\gvac{2} \gcn{3}{2}{5}{-2}  \gcl{4} \gnl
\gvac{2}  \gnl  
\hspace{-0,22cm} \gcmu \gcn{1}{1}{2}{1} \gelt{\s\Phi_{\lambda}} \gcn{1}{1}{0}{1} \gnl 
\gcl{1} \gbr \gcl{1} \gcl{1} \gnl
\gmuf{1} \gmuf{1} \gmu \gnl
\gob{2}{F} \gob{2}{F} \gob{2}{B}
\gend=
\gbeg{3}{9}
\got{3}{B} \gnl
\gvac{1} \gcl{1} \gcn{1}{1}{2}{1} \gelt{\s\Phi_{\lambda}} \gcn{1}{1}{0}{1} \gnl
\grmo \gvac{1} \gcl{1} \gcl{1} \gcl{3} \gcl{5} \gnl
\gcl{1} \gbr \gnl
\gmuf{1} \gcl{1} \gnl
\gcn{1}{1}{2}{1}  \grmo \gvac{1} \gcl{1} \gcl{1} \gnl
\gcl{1} \gcl{1} \gbr \gnl
\gcl{1} \gmuf{1} \gmu \gnl
\gob{1}{F} \gob{2}{F} \gob{2}{B}
\gend 
\end{equation} & & \vspace{0,8cm}
\begin{equation*} 
\gbeg{2}{4}
\got{3}{B} \gnl
\grmo \gvac{1} \gcl{1} \gnl
\gcu{1} \gcl{1} \gnl
\gob{3}{B} 
\gend=
\gbeg{2}{4}
\got{1}{B} \gnl
\gcl{2} \gnl
\gob{1}{B} 
\gend
\end{equation*} 
\end{tabular}
\end{center} \vspace{-0,5cm}
$$ \textnormal{\hspace{1cm}\footnotesize alternative quasi coaction}  \hspace{5,6cm}  \textnormal{\footnotesize quasi coaction counity} $$ \vspace{-0,7cm}
$$ \textnormal{ \hspace{1cm} \footnotesize from \equref{quasi coaction}} \hspace{7cm} \textnormal{\footnotesize from \equref{quasi coaction counity}} $$

\begin{center} \hspace{-1,4cm} 
\begin{tabular}{p{10.6cm}p{0cm}p{5.3cm}}
\begin{equation} \eqlabel{3-cocycle cond fi-lambda case 3 alt} 
\gbeg{8}{7}
\gvac{2} \gcn{1}{2}{2}{-2} \gelt{\s\Phi_{\lambda}} \gcn{2}{2}{0}{7}  \gnl
\gvac{4} \gcn{2}{1}{0}{4} \gnl
\gcmu \gvac{1} \gcn{1}{1}{2}{-1} \gelt{\s\Phi_{\lambda}} \gcn{1}{1}{0}{1} \gcn{1}{1}{0}{1}  \gcl{1} \gnl
\gcl{1} \gbr \gcn{1}{1}{3}{1} \grmo \gvac{1} \gcl{1} \gcl{1} \gcl{2} \gnl
\gmuf{1} \gmuf{1}  \gcl{1} \gbr \gnl
\gcn{1}{1}{2}{2} \gvac{1} \gcn{1}{1}{2}{2} \gvac{1} \gmuf{1} \gmu \gnl
\gob{2}{F} \gob{2}{F} \gob{2}{F} \gob{2}{B} \gnl
\gend=
\gbeg{5}{6}
\gvac{1} \gcn{1}{2}{2}{-1} \gelt{\s\Phi_{\lambda}} \gcn{2}{2}{0}{7}  \gnl
\gvac{1} \gcn{1}{1}{3}{2} \gnl
\gcl{3} \gcmu \gcn{1}{1}{2}{1} \gelt{\s\Phi_{\lambda}} \gcn{1}{1}{0}{1} \gcl{2} \gnl
\gvac{1} \gcl{1} \gbr \gcl{1} \gcl{1} \gnl
\gvac{1} \gmuf{1} \gmuf{1}  \gmu \gnl
\gob{1}{F} \gob{2}{F} \gob{2}{F} \gob{2}{B} \gnl
\gend 
\end{equation} &  & \vspace{0,6cm}
\begin{equation*} 
\gbeg{3}{3}
\gcn{1}{1}{2}{1} \gelt{\s\Phi_{\lambda}} \gcn{1}{1}{0}{1} \gnl 
\gcu{1} \gcl{1} \gcl{1} \gnl
\gvac{1} \gob{1}{F} \gob{1}{B} \gnl
\gend=
\gbeg{2}{3}
\gu{1} \gu{1} \gnl
\gcl{1} \gcl{1} \gnl
\gob{1}{F} \gob{1}{B} \gnl
\gend=
\gbeg{3}{3}
\gcn{1}{1}{2}{1} \gelt{\s\Phi_{\lambda}} \gcn{1}{1}{0}{1} \gnl 
\gcl{1} \gcu{1} \gcl{1} \gnl
\gob{1}{F} \gob{3}{B} \gnl
\gend
\end{equation*}
\end{tabular}
\end{center} \vspace{-0,5cm}
$$ \textnormal{\hspace{1cm}\footnotesize alternative 3-cocycle condition for $\Phi_{\lambda}$}  \hspace{4cm}  \textnormal{\footnotesize normalized 3-cocycle $\Phi_{\lambda}$} $$ \vspace{-0,7cm}
$$ \textnormal{ \hspace{1cm} \footnotesize from \equref{3-cocycle cond fi-lambda}} \hspace{7cm} \textnormal{\footnotesize from \equref{normalized 3-cocycle fi-lambda}} $$

\pagebreak

$$
\gbeg{5}{9}
\got{1}{F} \got{2}{F} \got{2}{B} \gnl
\gcl{1} \gcmu \gcmu \gnl
\gcl{1} \gcl{1} \gbr \gcl{5} \gnl
\gcn{1}{1}{1}{2}  \glmf \gcn{1}{1}{-1}{-1} \gcn{1}{4}{-1}{-1} \gnl
\gcmu \gcl{1} \gnl
\gcl{1} \gbr \gnl
\glmf \gcn{1}{1}{-1}{-1} \gcn{1}{1}{-1}{-1} \gnl
\gvac{1} \gcl{1} \gcn{1}{1}{1}{2} \gelt{\s\omega} \gcn{1}{1}{1}{0} \gnl
\gob{3}{B} 
\gend=
\gbeg{4}{7}
\got{2}{F} \got{2}{F} \got{2}{B} \gnl
\gcmu \gcmu  \gcmu \gnl
\gcl{1} \gbr \gcl{1} \gcl{1} \gcl{3} \gnl
\gmuf{1} \gcn{1}{1}{1}{2} \gelt{\s\omega} \gcn{1}{1}{1}{0}  \gnl
\gcn{1}{1}{2}{9}  \gnl
\gvac{4}  \glmf \gcn{1}{1}{-1}{-1} \gnl
\gob{11}{B} 
\gend \hspace{4,5cm}
\gbeg{1}{4}
\got{1}{B}\gnl
\gcl{2}  \gnl
\gob{1}{B} 
\gend=
\gbeg{3}{5}
\got{3}{B}\gnl
\guf{1} \gcl{1} \gnl
\glmf \gcn{1}{1}{-1}{-1}  \gnl
\gvac{1} \gcl{1} \gnl
\gob{3}{B} 
\gend
$$
$$\qquad \textnormal{ \footnotesize alternative  quasi action}  \hspace{4cm}  \textnormal{\footnotesize quasi action unity} $$
$$\qquad \textnormal{\footnotesize from \equref{dual quasi action}} \hspace{6cm} \textnormal{\footnotesize from \equref{dual quasi action unity}} $$ 

$$
\gbeg{7}{7}
\got{1}{F} \got{2}{F} \got{2}{F} \got{2}{B} \gnl
\gcl{3} \gcmu \gcmu \gcmu \gnl
\gvac{1} \gcl{1} \gbr \gcl{1} \gcl{1} \gcl{2} \gnl
\gvac{1} \gmuf{1} \gcn{1}{1}{1}{2} \gelt{\s\omega} \gcn{1}{1}{1}{0}  \gnl
\gcn{1}{2}{1}{6} \gcn{1}{1}{2}{4} \gvac{4} \gcn{2}{2}{1}{-4} \gnl
\gvac{3} \gelt{\s\omega} \gnl
\gend=
\gbeg{4}{7}
\got{2}{F} \got{2}{F} \got{2}{F} \got{2}{B} \gnl
\gcn{1}{1}{2}{2} \gvac{1} \gcn{1}{1}{2}{2} \gvac{1} \gcmu \gcmu \gnl
\gcmu \gcmu  \gcl{1} \gbr \gcl{3} \gnl
\gcl{1} \gbr \gcl{1} \glmf \gcn{1}{1}{-1}{-1} \gcn{1}{1}{-1}{-1} \gnl
\gmuf{1} \gcn{1}{1}{1}{2} \gelt{\s\omega} \gcn{1}{1}{3}{0} \gcn{2}{2}{3}{0} \gnl
\gcn{1}{2}{2}{8} \gvac{5} \gcn{1}{2}{3}{-2} \gnl
\gvac{4} \gelt{\s\omega} \gnl
\gend \hspace{4cm}
\gbeg{3}{4}
\got{1}{F} \got{3}{B} \gnl
\gcl{1} \guf{1} \gcl{1} \gnl
\gcn{1}{1}{1}{2} \gelt{\omega} \gcn{1}{1}{1}{0} \gnl
\gend=
\gbeg{2}{4}
\got{1}{F} \got{1}{B} \gnl
\gcl{1} \gcl{1} \gnl
\gcu{1} \gcu{1} \gnl
\gend=
\gbeg{4}{4}
\gvac{1} \got{1}{F} \got{1}{B} \gnl
\guf{1} \gcl{1} \gcl{1} \gnl
\gcn{1}{1}{1}{2} \gelt{\omega} \gcn{1}{1}{1}{0} \gnl
\gend
$$ 

$$\quad  \textnormal{\footnotesize alternative 3-cycle condition $\omega$} \hspace{3,8cm}\textnormal{\footnotesize normalized 3-cycle $\omega$} $$
$$\qquad\textnormal{\footnotesize from \equref{3-cycle omega}} \hspace{6,8cm} \textnormal{\footnotesize from \equref{normalized 3-cycle omega}} $$

The above provides an alternative definition of a twisted (co)action in the context of braided monoidal categories. When $F$ is (co)commutative, the two definitions coincide.

\begin{ex} \exlabel{alternative}
When $\C={}_R\M$ the category of modules over a commutative ring $R$ and $B,F$ are $R$-bialgebras, the above identities for a left alternative quasi-coaction of $F$ on $B$ 
with a 3-cocycle $\Phi_{\lambda}$ take form: 
$$b_{[-1]_(1)}X^1\ot b_{[-1]_(2)}X^2\ot X^3b_{[0]}=b_{[-2]} X^1\ot b_{[-1]} X^2\ot b_{[0]}X^3$$
$$\Epsilon_F(b_{[-1]})b_{[0]}=b$$
$$X^1_{(1)}Y^1\ot X^1_{(2)}Y^2\ot Y^3_{[-1]}X^2\ot Y^3X^3=X^1\ot X^2_{(2)}Y^1\ot X^2_{(2)}Y^2\ot Y^3X^3$$
$$\Epsilon_F(X^1)X^2\ot X^3=1_F\ot 1_B= X^1\Epsilon_F(X^2)\ot X^3$$
where $b\in B$ and $\Phi_{\lambda}=X^1\ot X^2\ot X^3=Y^1\ot Y^2\ot Y^3$. For a left alternative quasi-action of $F$ on $B$ with a 3-cycle $\omega$ we have: 
$$f_{(1)}\cdot(g_{(1)}\cdot b_{(1)}) \omega(f_{(2)}, g_{(2)}, b_{(2)})=f_{(1)}g_{(1)}\omega(f_{(2)}, g_{(2)}, b_{(1)})\cdot b_{(2)}$$
$$1_F\cdot b=b$$
$$\omega(f, g_{(1)}h_{(1)}, b_{(2)})\omega(g_{(2)}, h_{(2)}, b_{(1)})=\omega(f_{(1)}g_{(1)}, h_{(2)}, b_{(2)})\omega(f_{(2)}, g_{(2)}, h_{(1)}b_{(1)})$$
$$\omega(f, 1_F, b)=\Epsilon_F(f)\Epsilon_B(b)=\omega(1_F, f, b)$$
for $f,g,h\in F$ and $b\in B$.
When $F$ is commutative, alternative quasi-coaction coincides with quasi-coaction \equref{quasi coaction case 3}, \equref{3-cocycle cond fi-lambda case 3}, 
and similarly when $F$ is cocommutative: alternative quasi-action coincides with quasi-action. 
\end{ex}

\subsection{Left-right mixed biwreath-like objects} \sslabel{left-right}

We  have defined a bimonad in $\K$ in \deref{bimonad}. Due to properties (b) and (c) we may call this a left bimonad in $\K$. Then it is clear how a right bimonad in $\K$ should be defined. 
In \seref{bEM cat} we have defined the 2-category $\bEM(\K)$ using left bimonads 
and so that the 2-cells $\psi, \phi$ satisfy \equref{psi laws for bimonads} and \equref{phi laws for bimonads}. We will say for these properties that 
$\psi$ and $\phi$ are left distributive laws, more precisely, that $\psi$ is left monadic and $\phi$ left comonadic.  
For a biwreath this means that its $\psi$ and $\phi$ have the analogous behaviour and that $\lambda_M$ and $\lambda_C$ satisfy \equref{8 lambda_M} and \equref{8 lambda_C}. 
Accordingly, let us denote the so far studied 2-category $\bEM(\K)$ by $\bEM(\K)_{left}$.

We may consider the right hand-side version of the latter 2-category in the obvious way, we denote it by $\bEM(\K)_{right}$, 
but we may also consider a mixed version: with right bimonads and where $\psi$ and $\phi$ are left distributive laws. 
We denote this 2-category by $\bEM(\K)_{l\x r}$. 
For a right bimonad in this 2-category and for which $\lambda_M, \lambda_C$ are canonical and $\widetilde{\lambda_M}=\widetilde{\lambda_C}=\lambda$,  
the identities corresponding to \equref{8 lambda_M} and \equref{8 lambda_C} take form: 
$$
\gbeg{4}{5}
\gvac{2} \got{1}{F} \got{1}{F} \gnl
\gvac{2} \glmptb \gnot{\hspace{-0,34cm}\mu_M} \grmptb \gnl
\glmpb \gnot{\Delta_M} \gcmpb \grmptb \gcl{1} \gnl
\gcl{1} \gcl{1} \gmu \gnl
\gob{1}{F} \gob{1}{F} \gob{2}{B} \gnl
\gend=
\gbeg{5}{8}
\got{1}{F} \got{5}{F} \gnl
\glmptb \gnot{\Delta_M} \gcmpb \grmpb \gcl{1} \gnl
\gcl{2} \gcl{1} \glmptb \gnot{\hspace{-0,34cm}\psi} \grmptb \gnl
\gcl{1} \glmptb \gnot{\hspace{-0,34cm}\lambda} \grmptb \gcl{3} \gnl
\glmptb \gnot{\hspace{-0,34cm}\mu_M} \grmptb \gcl{1} \gnl
\gcl{1} \glmptb \gnot{\hspace{-0,34cm}\psi} \grmptb \gnl
\gcl{1}  \gcl{1}  \gmu \gnl
\gob{1}{F} \gob{1}{F} \gob{2}{B} \gnl
\gend 
\hspace{2cm}
\gbeg{4}{5}
\got{1}{F} \got{1}{F} \got{2}{B} \gnl
\gcl{1} \gcl{1} \gcmu \gnl
\glmpt \gnot{\mu_C} \gcmpt \grmptb \gcl{1} \gnl
\gvac{2} \glmptb \gnot{\hspace{-0,34cm}\Delta_C} \grmptb \gnl
\gvac{2} \gob{1}{F} \gob{1}{F} \gnl
\gend=
\gbeg{5}{8}
\got{1}{F} \got{1}{F} \got{2}{B} \gnl
\gcl{2} \gcl{1} \gcmu \gnl
\gvac{1} \glmptb \gnot{\hspace{-0,34cm}\phi} \grmptb \gcl{3} \gnl
\glmptb \gnot{\hspace{-0,34cm}\Delta_C} \grmptb \gcl{1} \gnl
\gcl{2} \glmptb \gnot{\hspace{-0,34cm}\lambda} \grmptb \gcl{1} \gnl
\gvac{1} \gcl{1} \glmptb \gnot{\hspace{-0,34cm}\phi} \grmptb \gnl
\glmpt \gnot{\mu_C} \gcmptb \grmpt \gcl{1} \gnl
\gvac{1} \gob{1}{F} \gvac{1} \gob{1}{F} \gnl
\gend
$$
This motivates our next definition which differs from \deref{bl mixed} in the part a) and the identities \equref{lambda_M 8*} and \equref{lambda_C 8*} of the latter: 

\begin{defn} \delabel{lr mixed bl o}
A left-right mixed biwreath-like object in $\K$ is a monad $(F, \mu_C, \eta_C)$ in $\EM^C(\K)$ and a comonad $(F, \Delta_M, \Epsilon_M)$ in $\EM^M(\K)$ over the same right bimonad $B$ in $\K$ 
with the canonical restrictions: 
$$
\gbeg{2}{3}
\got{1}{F} \got{1}{F} \gnl
\gmuf{1} \gnl
\gob{2}{F} \gnl
\gend:=
\gbeg{3}{4}
\got{1}{F} \got{1}{F} \gnl
\gcl{1} \gcl{1} \gu{1} \gnl
\glmpt \gnot{\mu_C} \gcmptb \grmpt \gnl
\gob{3}{F} \gnl
\gend \hspace{2cm}
\gbeg{1}{4}
\got{1}{} \gnl
\guf{1} \gnl
\gcl{1} \gnl
\gob{1}{F} \gnl
\gend:=
\gbeg{1}{4}
\got{1}{} \gnl
\gu{1} \gnl
\gbmp{\eta_C} \gnl
\gob{1}{F} \gnl
\gend \hspace{2cm}
\gbeg{2}{3}
\got{2}{F} \gnl
\gcmu \gnl
\gob{1}{F} \gob{1}{F} \gnl
\gend:=
\gbeg{3}{4}
\got{3}{F} \gnl
\glmpb \gnot{\Delta_M} \gcmptb \grmpb \gnl
\gcl{1} \gcl{1} \gcu{1} \gnl
\gob{1}{F} \gob{1}{F} \gnl
\gend \hspace{2cm}
\gbeg{1}{3}
\got{1}{F} \gnl
\gcl{1} \gnl
\gcu{1} \gnl
\gend:=
\gbeg{1}{3}
\got{1}{F} \gnl
\gbmp{\Epsilon_{M}} \gnl
\gcu{1} \gnl
\gob{1}{} \gnl
\gend
$$
equipped with a 2-cell $\lambda: FF\to FF$ which is a right distributive law, that is: 
$$
\gbeg{3}{5}
\got{1}{F}\got{1}{F}\got{1}{F}\gnl
\glmptb \gnot{\hspace{-0,34cm}\lambda} \grmptb \gcl{1} \gnl
\gcl{1} \glmptb \gnot{\hspace{-0,34cm}\lambda} \grmptb \gnl
\gmuf{1} \gcl{1} \gnl
\gob{2}{F} \gob{1}{F}
\gend=
\gbeg{3}{5}
\got{0}{F}\got{2}{F}\got{0}{F}\gnl
\gcn{1}{1}{0}{1} \hspace{-0,24cm} \gmuf{1} \gnl 
\gvac{1} \hspace{-0,34cm} \glmptb \gnot{\hspace{-0,34cm}\lambda} \grmptb  \gnl
\gvac{1} \gcl{1} \gcl{1} \gnl
\gvac{1} \gob{1}{F} \gob{1}{F}
\gend, \hspace{0,5cm}
\quad
\gbeg{2}{5}
\got{1}{F} \gnl
\gcl{1} \guf{1} \gnl
\glmptb \gnot{\hspace{-0,34cm}\lambda} \grmptb \gnl
\gcl{1} \gcl{1} \gnl
\gob{1}{F} \gob{1}{F}
\gend=
\gbeg{2}{5}
\got{3}{F} \gnl
\guf{1} \gcl{1} \gnl
\gcl{2} \gcl{2} \gnl
\gob{1}{F} \gob{1}{F}
\gend; \hspace{1,5cm}
\gbeg{3}{5}
\got{2}{F} \got{1}{F}\gnl
\gcmu \gcl{1} \gnl
\gcl{1} \glmptb \gnot{\hspace{-0,34cm}\lambda} \grmptb \gnl
\glmptb \gnot{\hspace{-0,34cm}\lambda} \grmptb \gcl{1} \gnl
\gob{1}{F} \gob{1}{F} \gob{1}{F} 
\gend=
\gbeg{3}{5}
\got{1}{F} \got{1}{F} \gnl
\gcl{1} \gcl{1} \gnl
\glmptb \gnot{\hspace{-0,34cm}\lambda} \grmptb \gnl
\gcn{1}{1}{1}{0} \hspace{-0,22cm} \gcmu \gnl
\gob{1}{F} \gob{1}{F} \gob{1}{F} 
\gend, \hspace{0,5cm}
\quad
\gbeg{2}{5}
\got{1}{F} \got{1}{F} \gnl
\gcl{1} \gcl{1} \gnl
\glmptb \gnot{\hspace{-0,34cm}\lambda} \grmptb \gnl
\gcl{1} \gcu{1} \gnl
\gob{1}{F} 
\gend=
\gbeg{2}{5}
\got{1}{F} \got{1}{F} \gnl
\gcl{1} \gcl{3} \gnl
\hspace{-0,32cm} \gcu{2} \gnl
\gob{4}{F} 
\gend
$$
and 2-cells $\tau_1: BF\to FB$ and $\tau_2: FB\to BF$ which are both monadic and comonadic distributive laws at appropiate side 
(that is, $\tau_1$ is left monadic and right comonadic, and $\tau_2$ is left comonadic and right monadic), so that the following compatibility conditions hold: 
\begin{equation}\eqlabel{lambda mixed 1-3 lr}
\gbeg{2}{3}
\got{1}{F} \got{1}{F} \gnl
\gcl{1} \gcl{1} \gnl
\gcu{1} \gcu{1} \gnl
\gend=
\gbeg{2}{3}
\got{1}{F} \got{1}{F} \gnl
\gmuf{1} \gnl
\gvac{1} \hspace{-0,34cm} \gcu{1} \gnl
\gend \hspace{2cm}
\gbeg{1}{4}
\got{2}{}  \gnl
 \guf{1} \gnl
\hspace{-0,34cm} \gcmu \gnl
\gob{1}{F} \gob{1}{F} \gnl
\gend=
\gbeg{2}{4}
\got{1}{} \gnl
\guf{1} \guf{1} \gnl
\gcl{1} \gcl{1} \gnl
\gob{1}{F} \gob{1}{F}
\gend  \hspace{2cm}
\gbeg{1}{2}
\guf{1} \gnl
\gcu{1} \gnl
\gob{2}{} \gnl
\gend=
\Id_{id_{\A}}
\end{equation}
and: 
\begin{center} \hspace{-1,4cm} 
\begin{tabular}{p{7.2cm}p{1cm}p{6.8cm}}
\begin{equation}\eqlabel{mixed lambda_M 8}
\gbeg{3}{5}
\gvac{2} \got{1}{\hspace{-0,4cm}F} \got{1}{\hspace{-0,4cm}F} \gnl
\gvac{2} \hspace{-0,34cm} \gmuf{1} \gnl
\gvac{1} \hspace{-0,34cm} \glmpb \gnot{\Delta_M} \gcmpb \grmptb \gnl
\gvac{1} \gcl{1} \gcl{1} \gcl{1} \gnl
\gvac{1} \gob{1}{F} \gob{1}{F} \gob{1}{B} \gnl
\gend=
\gbeg{5}{6}
\got{1}{F} \got{5}{F} \gnl
\glmptb \gnot{\Delta_M} \gcmpb \grmpb \gcl{1} \gnl
\gcl{2} \gcl{1} \glmptb \gnot{\hspace{-0,34cm}\tau_1} \grmptb \gnl
\gcl{1} \glmptb \gnot{\hspace{-0,34cm}\lambda} \grmptb \gcl{2} \gnl
\gmuf{1} \gcl{1} \gnl
\gob{2}{F} \gob{1}{F} \gob{1}{B} \gnl
\gend 
\end{equation} &  & 
\begin{equation}\eqlabel{mixed lambda_C 8}
\gbeg{3}{5}
\got{1}{F} \got{1}{F} \got{1}{B} \gnl
\gcl{1} \gcl{1} \gcl{1} \gnl
\glmpt \gnot{\mu_C} \gcmpt \grmptb \gnl
\gvac{2} \hspace{-0,22cm} \gcmu \gnl
\gvac{2} \gob{1}{F} \gob{1}{F} \gnl
\gend=
\gbeg{5}{6}
\got{2}{F} \got{1}{F} \got{1}{B} \gnl
\gcmu \gcl{1} \gcl{2} \gnl
\gcl{2} \glmptb \gnot{\hspace{-0,34cm}\lambda} \grmptb \gnl
\gvac{1} \gcl{1} \glmptb \gnot{\hspace{-0,34cm}\tau_2} \grmptb \gnl
\glmpt \gnot{\mu_C} \gcmptb \grmpt \gcl{1} \gnl
\gvac{1} \gob{1}{F} \gvac{1} \gob{1}{F.} \gnl
\gend
\end{equation}
\end{tabular}
\end{center}
\end{defn}

\begin{rem}
In the above definition, for the 2-cells $\tau_1$ and $\tau_2$ one may also say that $(F, \tau_1, \tau_2)$ is a 1-cell in $\bEM(\K)_{left}$ and that 
$(F, \tau_2, \tau_1)$ is a 1-cell in $\bEM(\K)_{right}$. 
\end{rem}

Apply $\Epsilon_B$ to \equref{mixed lambda_M 8} (or apply $\eta_B$ to \equref{mixed lambda_C 8}) to get: 
\begin{equation} \eqlabel{lr bialgebra}
\gbeg{2}{4}
\got{1}{F} \got{1}{F} \gnl
\gmuf{1} \gnl
\gcmu \gnl
\gob{1}{F}\gob{1}{F}
\gend=
\gbeg{3}{5}
\got{2}{F} \got{1}{F} \gnl
\gcmu \gcl{1} \gnl
\gcl{1} \glmptb \gnot{\hspace{-0,34cm}\lambda} \grmptb \gnl
\gmuf{1} \gcl{1} \gnl
\gob{2}{F}  \gob{1}{F}
\gend=
\gbeg{4}{5}
\got{2}{F} \got{2}{F} \gnl
\gcmu \gcmu \gnl
\gcl{1} \gbr \gcl{1} \gnl
\gmuf{1} \gmuf{1} \gnl
\gob{2}{F}  \gob{2}{F}
\gend
\end{equation}
then $F$ is a right bimonad in $\K$ with possibly non-(co)associative (co)multiplication. Applying 
$\gbeg{1}{2}
\guf{1} \gnl
\gob{1}{F}
\gend F$ to \equref{mixed lambda_M 8} and 
$\gbeg{1}{2}
\got{1}{F}\gnl
\gcu{1} \gnl
\gnl
\gend F$ to \equref{mixed lambda_C 8}, similarly as in \equref{Delta_m con Fi-lambda} and \equref{mu_C con omega}, we obtain $\Delta_M$ and $\mu_C$ below, where  
$\Phi_{\lambda}$ and $\omega$ are as in \equref{fi-lambda 3} and \equref{omega 3}: 
$$\Delta_M=
\gbeg{5}{6}
\gvac{1} \got{5}{F} \gnl
\gcn{1}{1}{2}{1} \gelt{\s\Phi_{\lambda}} \gcn{1}{1}{0}{1} \gcl{1} \gnl
\gcl{2} \gcl{1} \glmptb \gnot{\hspace{-0,34cm}\tau_1} \grmptb \gnl
\gcl{1} \glmptb \gnot{\hspace{-0,34cm}\lambda} \grmptb \gcl{2} \gnl
\gmuf{1} \gcl{1} \gnl
\gob{2}{F} \gob{1}{F} \gob{1}{B} \gnl
\gend
\hspace{2cm}
\mu_C=
\gbeg{5}{6}
\got{2}{F} \got{1}{F} \got{1}{B} \gnl
\gcmu \gcl{1} \gcl{2} \gnl
\gcl{1} \glmptb \gnot{\hspace{-0,34cm}\lambda} \grmptb \gnl
\gcl{1} \gcl{1} \glmptb \gnot{\hspace{-0,34cm}\tau_2} \grmptb \gnl
\gcn{1}{1}{1}{2} \gelt{\omega} \gcn{1}{1}{1}{0} \gcl{1} \gnl
\gvac{1} \gob{5}{F} \gnl
\gend
$$

\subsection{Case four: $\D_3={}_F ^F\YD(\C)$ with quasi (co)actions} \sslabel{Case 3}

In $\K=\hat\C$ the 2-cells $\lambda, \tau_1, \tau_2$ given by \vspace{-0,8cm}
\begin{center} \hspace{-1,4cm}
\begin{tabular}{p{3.6cm}p{1,5cm}p{5cm}p{0cm}p{4.6cm}}
\begin{equation} \eqlabel{right lambda} 
\lambda=
\gbeg{3}{5}
\got{1}{F} \got{2}{F} \gnl
\gcl{1} \gcmu \gnl
\gbr \gcl{1} \gnl
\gcl{1} \gmuf{1} \gnl
\gob{1}{F} \gob{2}{F}
\gend
\end{equation} & & \vspace{0,1cm}
\begin{equation*}
\tau_1=
\gbeg{2}{3}
\got{1}{B} \got{1}{F} \gnl
\gbr \gnl
\gob{1}{F} \gob{1}{B} 
\gend \hspace{2,5cm}
\tau_2=
\gbeg{2}{3}
\got{1}{F} \got{1}{B} \gnl
\gbr \gnl
\gob{1}{B} \gob{1}{F} 
\gend 
\end{equation*}
\end{tabular}
\end{center} \vspace{-0,3cm}
satisfy the distributive law conditions from the above definition ($\lambda$ does it for the same reasons as so did \equref{left lambda} for \deref{bl mixed}: here we have the right 
hand-side version of the previous situation). Now the 2-cells $\Delta_M$ and $\mu_C$ become as below: \vspace{-0,7cm}
\begin{center} 
\begin{tabular}{p{4.6cm}p{2cm}p{4.6cm}}
\begin{equation} \eqlabel{Delta_M}
\Delta_M=
\gbeg{5}{7}
\gvac{1} \got{5}{F} \gnl
\gcn{1}{2}{2}{0} \gelt{\s\Phi_{\lambda}} \gcn{1}{1}{0}{1} \gcl{1} \gnl
\gvac{1} \gcn{1}{1}{1}{0} \gbr \gnl
\gcn{1}{2}{0}{0} \gcn{1}{1}{0}{0} \hspace{-0,22cm} \gcmu \gcn{1}{1}{0}{1} \gnl
\gvac{1} \gbr \gcl{1}  \gcl{2} \gnl
\gmuf{1} \gmuf{1} \gnl
\gob{2}{F} \gob{2}{F} \gob{1}{B} \gnl
\gend
\end{equation} & & \vspace{-0,4cm}
\begin{equation} \eqlabel{mu_C}
\mu_C=
\gbeg{5}{7}
\got{2}{F} \got{2}{F} \got{1}{B} \gnl
\gcmu \gcmu \gcl{2} \gnl
\gcl{2} \gbr \gcl{1} \gnl
\gvac{1} \gcl{1} \gmuf{1} \gcn{1}{1}{1}{0} \gnl
\gcn{1}{2}{1}{3} \gcn{1}{1}{1}{2} \gvac{1} \hspace{-0,34cm} \gbr \gnl
\gvac{2} \gelt{\s\omega} \gcn{1}{1}{1}{0} \gcl{1} \gnl
\gob{9}{F.} \gnl
\gend
\end{equation}
\end{tabular}
\end{center}
The rest of the structures of $F$ are the same as in \ssref{Case 3 alt}: it has (co)associative pre-(co)multiplications so that by \equref{lambda mixed 1-3 lr} and \equref{lr bialgebra} 
$F$ is a bialgebra in $\C$, it has trivial left $B$-(co)module structures and $B$ is a trivial r-ight $F$-(co)module. The difference starts with the alternative structures: 
from \equref{quasi coaction} -- \equref{dual quasi action unity} we now obtain that $B$ is a left {\em quasi $F$-comodule} and a left {\em coquasi $F$-module}. Our 
terminology and notation ``$\Phi_{\lambda}$'' 
alludes to comodule algebras over a quasi-bialgebra defined in \cite{HN} and the dual construction of it. In the quasi-comodule algebra case we recover the same definition as in 
\cite{HN} (but more general: in the context of any braided monoidal category $\C$) without having that $F$ is a quasi-bialgebra, but rather it is a proper bialgebra. 
This situation appears also in \cite{Street} under the name ``twisted coaction''. Namely, the coaction in 
\equref{quasi coaction} is twisted by $\Phi_{\lambda}$ which by \equref{3-cocycle cond fi-lambda} -- \equref{normalized 3-cocycle fi-lambda} is a normalized 3-cocycle, and the action in 
\equref{dual quasi action} is twisted by $\omega$ which by \equref{3-cycle omega} -- \equref{normalized 3-cycle omega} is a normalized 3-cycle. 
Putting the above $\Delta_M, \mu_C, \psi$ and $\phi$ in \equref{quasi coaction} -- \equref{dual quasi action unity}, \equref{3-cycle omega} -- \equref{normalized 3-cocycle fi-lambda} 
we obtain: 
\begin{center} \hspace{-1,4cm} 
\begin{tabular} {p{8cm}p{1cm}p{4cm}} 
\begin{equation} \eqlabel{quasi coaction case 3} 
\gbeg{5}{7}
\got{9}{B} \gnl
\gcn{1}{2}{2}{0} \gelt{\s\Phi_{\lambda}} \gcn{1}{1}{0}{1} \grmo \gvac{1} \gcl{1} \gnl
\gvac{1} \gcn{1}{1}{1}{0} \gbr \gcl{3} \gnl
\gcn{1}{2}{0}{0} \gcn{1}{1}{0}{0} \hspace{-0,22cm} \gcmu \gcn{1}{2}{0}{0} \gnl
\gvac{1} \gbr \gcl{1} \gnl
\gmuf{1} \gmuf{1}  \hspace{-0,22cm} \gmu \gnl
\gvac{1} \gob{1}{F} \gvac{1} \gob{1}{F} \gob{2}{B}
\gend=
\gbeg{3}{9}
\got{3}{B} \gnl
\gvac{1} \gcl{1} \gcn{1}{1}{2}{1} \gelt{\s\Phi_{\lambda}} \gcn{1}{1}{0}{1} \gnl
\grmo \gvac{1} \gcl{1} \gcl{1} \gcl{3} \gcl{5} \gnl
\gcl{1} \gbr \gnl
\gmuf{1} \gcl{1} \gnl
\gcn{1}{1}{2}{1}  \grmo \gvac{1} \gcl{1} \gcl{1} \gnl
\gcl{1} \gcl{1} \gbr \gnl
\gcl{1} \gmuf{1} \gmu \gnl
\gob{1}{F} \gob{2}{F} \gob{2}{B}
\gend 
\end{equation} & & \vspace{0,8cm}
\begin{equation*} 
\gbeg{2}{4}
\got{3}{B} \gnl
\grmo \gvac{1} \gcl{1} \gnl
\gcu{1} \gcl{1} \gnl
\gob{3}{B} 
\gend=
\gbeg{2}{4}
\got{1}{B} \gnl
\gcl{2} \gnl
\gob{1}{B} 
\gend
\end{equation*} 
\end{tabular}
\end{center} \vspace{-0,5cm}
$$ \textnormal{\hspace{1cm}\footnotesize quasi coaction}  \hspace{4,6cm}  \textnormal{\footnotesize quasi coaction counity} $$ \vspace{-0,7cm}
$$ \textnormal{ \hspace{1cm} \footnotesize from \equref{quasi coaction}} \hspace{6cm} \textnormal{\footnotesize from \equref{quasi coaction counity}} $$

\begin{center} 
\begin{tabular}{p{8.6cm}p{1cm}p{5.3cm}}
\begin{equation} \eqlabel{3-cocycle cond fi-lambda case 3} 
\gbeg{7}{7}
\gcn{1}{1}{2}{1} \gelt{\s\Phi_{\lambda}} \gcn{1}{1}{0}{1} \gcn{1}{1}{2}{1} \gelt{\s\Phi_{\lambda}} \gcn{1}{1}{0}{1} \gnl
\gcn{1}{1}{1}{0} \gcn{1}{1}{1}{0} \gbr \gcn{2}{2}{1}{4} \gcn{1}{2}{-1}{2}  \gnl
\gcn{1}{2}{0}{0} \gcn{1}{1}{0}{0} \hspace{-0,22cm} \gcmu \gcn{1}{1}{0}{3} \gnl
\gvac{1} \gbr \gcl{1} \grmo \gvac{1} \gcl{1} \gcl{1} \gcl{2} \gnl
\gmuf{1} \gmuf{1}  \gcl{1} \gbr \gnl
\gcn{1}{1}{2}{2} \gvac{1} \gcn{1}{1}{2}{2} \gvac{1} \gmuf{1} \gmu \gnl
\gob{2}{F} \gob{2}{F} \gob{2}{F} \gob{2}{B} \gnl
\gend=
\gbeg{5}{8}
\gvac{1} \gcn{1}{2}{2}{-2} \gelt{\s\Phi_{\lambda}} \gcn{2}{2}{0}{5}  \gnl
\gvac{2} \gcn{2}{2}{2}{5} \gnl
\gcn{1}{5}{0}{0} \gcn{1}{2}{2}{0} \gelt{\s\Phi_{\lambda}} \gcn{1}{1}{0}{1} \gvac{1} \gcl{1} \gnl
\gvac{2} \gcn{1}{1}{1}{0} \gbr \gcl{3} \gnl
\gcn{1}{3}{0}{0} \gcn{1}{2}{0}{0} \gcn{1}{1}{0}{0} \hspace{-0,22cm} \gcmu \gcn{1}{2}{0}{0} \gnl
\gvac{2} \gbr \gcl{1} \gnl
\gvac{1} \gmuf{1} \gmuf{1}  \hspace{-0,22cm} \gmu \gnl
\gob{2}{F} \gob{1}{F} \gvac{1} \gob{1}{F} \gob{2}{B} \gnl
\gend 
\end{equation} &  & \vspace{0,6cm}
\begin{equation*} 
\gbeg{3}{3}
\gcn{1}{1}{2}{1} \gelt{\s\Phi_{\lambda}} \gcn{1}{1}{0}{1} \gnl 
\gcu{1} \gcl{1} \gcl{1} \gnl
\gvac{1} \gob{1}{F} \gob{1}{B} \gnl
\gend=
\gbeg{2}{3}
\guf{1} \gu{1} \gnl
\gcl{1} \gcl{1} \gnl
\gob{1}{F} \gob{1}{B} \gnl
\gend=
\gbeg{3}{3}
\gcn{1}{1}{2}{1} \gelt{\s\Phi_{\lambda}} \gcn{1}{1}{0}{1} \gnl 
\gcl{1} \gcu{1} \gcl{1} \gnl
\gob{1}{F} \gob{3}{B} \gnl
\gend
\end{equation*}
\end{tabular}
\end{center} \vspace{-0,5cm}
$$ \textnormal{\hspace{1cm}\footnotesize 3-cocycle condition for $\Phi_{\lambda}$}  \hspace{4,6cm}  \textnormal{\footnotesize normalized 3-cocycle $\Phi_{\lambda}$} $$ \vspace{-0,7cm}
$$ \textnormal{ \hspace{1cm} \footnotesize from \equref{3-cocycle cond fi-lambda}} \hspace{6cm} \textnormal{\footnotesize from \equref{normalized 3-cocycle fi-lambda}} $$

$$
\gbeg{5}{9}
\got{1}{F} \got{2}{F} \got{2}{B} \gnl
\gcl{1} \gcmu \gcmu \gnl
\gcl{1} \gcl{1} \gbr \gcl{5} \gnl
\gcn{1}{1}{1}{2}  \glmf \gcn{1}{1}{-1}{-1} \gcn{1}{4}{-1}{-1} \gnl
\gcmu \gcl{1} \gnl
\gcl{1} \gbr \gnl
\glmf \gcn{1}{1}{-1}{-1} \gcn{1}{1}{-1}{-1} \gnl
\gvac{1} \gcl{1} \gcn{1}{1}{1}{2} \gelt{\s\omega} \gcn{1}{1}{1}{0} \gnl
\gob{3}{B} 
\gend=
\gbeg{4}{7}
\got{2}{F} \got{2}{F} \got{1}{B} \gnl
\gcmu \gcmu  \hspace{-0,22cm} \gcmu \gnl
\gvac{1} \hspace{-0,22cm} \gcl{2} \gbr \gcl{1} \gcn{1}{2}{0}{0} \gcn{1}{3}{0}{0} \gnl
\gvac{2} \gcl{1} \gmuf{1} \gnl
\gvac{1} \gcn{1}{2}{1}{3} \gcn{1}{1}{1}{2} \gvac{1} \hspace{-0,32cm} \gbr \gnl
\gvac{3} \gelt{\s\omega} \gcn{1}{1}{1}{0} \glmf \gcn{1}{1}{-1}{-1} \gnl
\gob{13}{B} 
\gend \hspace{4,5cm}
\gbeg{1}{4}
\got{1}{B}\gnl
\gcl{2}  \gnl
\gob{1}{B} 
\gend=
\gbeg{3}{5}
\got{3}{B}\gnl
\guf{1} \gcl{1} \gnl
\glmf \gcn{1}{1}{-1}{-1}  \gnl
\gvac{1} \gcl{1} \gnl
\gob{3}{B} 
\gend
$$
$$\qquad \textnormal{ \footnotesize quasi action}  \hspace{4,5cm}  \textnormal{\footnotesize quasi action unity} $$
$$\qquad \textnormal{\footnotesize from \equref{dual quasi action}} \hspace{5cm} \textnormal{\footnotesize from \equref{dual quasi action unity}} $$

$$
\gbeg{7}{7}
\got{1}{F} \got{2}{F} \got{2}{F} \got{1}{B} \gnl
\gcl{4} \gcmu \gcmu  \hspace{-0,22cm} \gcmu \gnl
\gvac{2} \gcn{1}{2}{0}{0} \hspace{-0,22cm} \gbr \gcl{1} \gcn{1}{2}{0}{0} \gcn{1}{4}{0}{0} \gnl
\gvac{3} \gcl{1} \gmuf{1} \gnl
\gvac{2} \gcn{1}{2}{1}{3} \gcn{1}{1}{1}{2} \gvac{1} \hspace{-0,32cm} \gbr \gnl
\gvac{3} \gcn{1}{2}{-2}{2} \gelt{\s\omega} \gcn{1}{1}{1}{0} \gcn{2}{2}{1}{-2} \gnl 
\gvac{5} \gcn{2}{2}{5}{0} \gnl
\gvac{4} \gelt{\s\omega}  \gnl
\gend=
\gbeg{4}{7}
\got{2}{F} \got{2}{F} \got{2}{F} \got{2}{B} \gnl
\gcn{1}{1}{2}{2} \gvac{1} \gcn{1}{1}{2}{2} \gvac{1} \gcmu \gcmu \gnl
\gcmu \gcmu  \gcl{1} \gbr \gcl{2} \gnl
\gcl{2} \gbr \gcl{1} \glmf \gcn{1}{1}{-1}{-1} \gcn{1}{1}{-1}{-1} \gnl
\gvac{1} \gcl{1} \gmuf{1} \gcn{1}{1}{3}{0} \gcn{2}{2}{3}{0} \gcn{1}{2}{1}{-2} \gnl
\gcn{1}{1}{1}{2} \gcn{1}{1}{1}{2} \gvac{1} \hspace{-0,32cm} \gbr  \gnl
\gvac{1} \gcn{1}{1}{1}{2} \gelt{\s\omega} \gcn{1}{1}{1}{0} \gcn{1}{1}{1}{2} \gelt{\s\omega} \gcn{1}{1}{1}{0} \gnl
\gend \hspace{4cm}
\gbeg{3}{4}
\got{1}{F} \got{3}{B} \gnl
\gcl{1} \guf{1} \gcl{1} \gnl
\gcn{1}{1}{1}{2} \gelt{\omega} \gcn{1}{1}{1}{0} \gnl
\gend=
\gbeg{2}{4}
\got{1}{F} \got{1}{B} \gnl
\gcl{1} \gcl{1} \gnl
\gcu{1} \gcu{1} \gnl
\gend=
\gbeg{4}{4}
\gvac{1} \got{1}{F} \got{1}{B} \gnl
\guf{1} \gcl{1} \gcl{1} \gnl
\gcn{1}{1}{1}{2} \gelt{\omega} \gcn{1}{1}{1}{0} \gnl
\gend
$$

$$\qquad  \textnormal{\footnotesize  3-cycle condition $\omega$} \hspace{4cm}\textnormal{\footnotesize normalized 3-cycle $\omega$} $$
$$\quad\textnormal{\footnotesize from \equref{3-cycle omega}} \hspace{6cm} \textnormal{\footnotesize from \equref{normalized 3-cycle omega}} $$

\begin{ex} \exlabel{3-1}
With the left-right mixed biwreath-like object we recover the definition \cite[Definition 12]{Street} of a ``twisted coaction''. Observe that this definition comes out 
from the 2-cell conditions \equref{quasi coaction} -- \equref{quasi coaction counity} of $\Delta_M, \Epsilon_M$, which underlie in the definition of a mixed wreath. 
\end{ex}

\begin{ex}
For $\C=\R\x\Mod$, the category of modules over a commutative ring $R$, we recover the definition \cite[Definition 7.1]{HN} of a comodule algebra over a (quasi)bialgebra  
(as we said above, the quasi coaction of $H$ in \cite[Definition 7.1]{HN} can be defined no matter if $H$ is a quasi or a proper bialgebra). Dually, we obtain the definition 
of a module algebra over a (coquasi)bialgebra. 
\end{ex}

\begin{ex} \exlabel{3-3}
By \rmref{4 combs of wreaths} we have that with the structures studied in this example $F$ is a mixed wreath around the monad $B$. Thus we recover 
(the left hand-side version of) \cite[Proposition 13]{Street} and \cite[Proposition 5.3]{BC} for $\C=\R\x\Mod$. 

In the latter case the braided monoidal category $\C$ is that of 
modules over a commutative ring $R$, although both $B$ and $F$ are left $B$-modules; observe that as above, the fact that $H$ in \cite[Proposition 5.3]{BC} is a quasi-bialgebra does not play any r\^ole.  

In our example we also obtain the dual construction: with the above structures $F$ is a mixed cowreath around the comonad $B$. 
\end{ex}

If $\mu_C, \Delta_M$ are canonical, we just have that $B$ is a proper left $F$-module and comodule.

\begin{rem}
We might consider $F=B$ and review the construction in \ssref{Case 3} with this assumption. 
In this case we may take $\Phi_{\lambda}=\Phi$, then \equref{quasi coaction case 3} would look like the quasi-bialgebra condition, though the condition 
\equref{3-cocycle cond fi-lambda case 3} is not precisely the necessary 3-cocycle condition defined in \cite{Drinfeld}. 
\end{rem}

\end{document}